\newtheorem{thm}{Theorem}[section]
\newtheorem{cor}{Corollary}
\newtheorem{lem}[thm]{Lemma}
\newtheorem{prop}{Proposition}
\newcommand{\B}{\mathcal{B}}
\newcommand{\D}{\mathcal{D}}
\newcommand{\E}{\mathcal{E}}
\newcommand{\LL}{\mathcal{L}}
\def\cL{\LL}
\newcommand{\HH}{\mathcal{H}}
\newcommand{\cS}{\mathcal{S}}
\def\cT{\mathcal{T}}
\newcommand{\R}{\mathbb{R}}
\newcommand{\Sm}{\mathscr{S}}
\newcommand{\al}{\alpha}
\newcommand{\ga}{\gamma}
\newcommand{\de}{\delta}
\newcommand{\e}{\varepsilon}
\newcommand{\fy}{\varphi}
\newcommand{\om}{\omega}
\newcommand{\la}{\lambda}
\newcommand{\s}{\sigma}
\newcommand{\ta}{\tau}
\newcommand{\ka}{\kappa}
\newcommand{\rh}{\rho}
\newcommand{\De}{\Delta}
\newcommand{\Om}{\Omega}
\newcommand{\La}{\Lambda}
\newcommand{\p}{\partial}
\newcommand{\na}{\nabla}
\newcommand{\supp}{\operatorname{supp}}
\newcommand{\Cu}{\bigcup}
\newcommand{\lec}{\lesssim}
\newcommand{\gec}{\gtrsim}
\newcommand{\IN}[1]{\text{ in }#1}
\newcommand{\I}{\infty}
\newcommand{\ti}{\widetilde}
\newcommand{\U}{\underline}
\newcommand{\LR}[1]{{\langle #1 \rangle}}
\newcommand{\EQ}[1]{\begin{equation}\begin{split} #1 \end{split}\end{equation}}
\newcommand{\Del}[1]{}
\newcommand{\CAS}[1]{\begin{cases} #1 \end{cases}}
\newcommand{\mat}[1]{\begin{pmatrix} #1 \end{pmatrix}}
\newcommand{\pt}{&}
\newcommand{\pr}{\\ &}
\newcommand{\pq}{\quad}
\newcommand{\pn}{}
\def\ti{\tilde}
\numberwithin{equation}{section}
\def\uu{\underline{u}}
\def\uv{\underline{v}}
\def\uN{\underbar{N}}
\def\uga{\underline{\ga}} %{\underbar{\ga}}
\def\ug{\underline{g}}
\def\vLa{\vec\La\,}
\def\diag{\mathrm{diag}}
\def\eps{\varepsilon}
\def\nn{\nonumber}
\def\weakto{\rightharpoonup}
\def\embed{\hookrightarrow}
\def\dist{\mathrm{dist}}
\def\sg{\mathfrak{s}}
\def\lam{\lambda}
\newcommand{\HL}{\mathfrak{H}}
\newcommand{\Sg}{\mathfrak{S}}
\newcommand{\sign}{\operatorname{sign}}
\newcommand{\Ext}{E_{\operatorname{ext}}}
\newcommand{\V}{\vec}
\newcommand{\CE}{\mathfrak{C}}
\title[Nonradial critical wave equation]{Global dynamics of the nonradial energy-critical wave equation above the ground state energy}
\author[Joachim Krieger, Kenji Nakanishi and Wilhelm Schlag]{}
\subjclass{35L05, 35B40.}
\keywords{Critical wave equation, blowup, scattering, stability, invariant manifold.}
\email{joachim.krieger@epfl.ch}
\email{n-kenji@math.kyoto-u.ac.jp}
\email{schlag@math.uchicago.edu}
\thanks{Support of the National Science Foundation  DMS-0617854 for the third author, and  the Swiss National Fund for
the first author  are gratefully acknowledged. The authors thank Thomas Duyckaerts and Carlos Kenig for their interest in this work, as well as for helpful conversations.  }
\begin{document}
\maketitle

% Enter the first author's name and address:
\centerline{\scshape Joachim Krieger }
\medskip
{\footnotesize
% please put the address of the first author
 \centerline{B\^{a}timent des Math\'ematiques, EPFL}
\centerline{Station 8,
CH-1015 Lausanne,
  Switzerland}
} % Do not forget to end the {\footnotesize by the sign }

\medskip

\centerline{\scshape Kenji Nakanishi}
\medskip
{\footnotesize
% please put the address of the first author
 \centerline{Department of Mathematics, Kyoto University}
\centerline{Kyoto 606-8502, Japan}
} % Do not forget to end the {\footnotesize by the sign }

\medskip

\centerline{\scshape Wilhelm Schlag}
\medskip
{\footnotesize
% please put the address of the first author
 \centerline{Department of Mathematics, The University of Chicago}
\centerline{5734 South University Avenue, Chicago, IL 60615, USA}
} % Do not forget to end the {\footnotesize by the sign }

\bigskip

 \centerline{(Communicated by Frank Merle)}

%The abstract of your paper
\begin{abstract}
In this paper we establish the existence of certain classes of solutions to the energy critical nonlinear wave equation in dimensions $3$ and~$5$
assuming that the energy exceeds the ground state energy only by a small amount.  No radial assumption is made.
 We find that there exist four sets in $\dot H^{1}\times L^{2}$ with nonempty interiors which correspond to all possible combinations of finite-time blowup on the one hand, and global existence and scattering to a free wave, on the other hand, as $t\to\pm\I$.
\end{abstract}

\section{Introduction}
Consider the energy-critical nonlinear wave equation with real-valued $u$
\EQ{\label{main PDE}
 \ddot u -\De u = |u|^{p-1}u, \pq u(t,x):\R^{1+d}\to\R, \pq p=\frac{d+2}{d-2}=2^*-1, \pq d=3 \text{ or } 5,}
in the energy space
\EQ{\label{eq:uvec}
 \vec u(t):=(u(t),\dot u(t)) \in \HH:=\dot H^1(\R^d)\times L^2(\R^d),}
which is the real Hilbert space with the inner product
\EQ{
 \LR{\uu,\uv}_\HH:=\LR{\na u_1|\na v_1}+\LR{u_2|v_2}, \pq \LR{f|g}:=\int_{\R^d}f(x)\cdot g(x)\,dx.}
Henceforth $\vec u=(u,\dot u)$ denotes the vector derived from a time function $u(t)$, while a general vector is denoted like $\uu=(u_1,u_2)$.
The seminorm on any domain $\Om\subset\R^d$ is defined by
\EQ{
 \|\uu\|_{\HH(\Om)}^2:=\|\na u_1\|_{L^2(\Om)}^2+\|u_2\|_{L^2(\Om)}^2.}
We remark that the dimensional restriction $d=3$ or $5$ is needed only for using the blow-up characterization by Duyckaerts-Kenig-Merle~\cite{DKM2}.

Equation~\eqref{main PDE} is locally well-posed for $\vec u(0)\in\HH$, globally for small data, and may blow up in finite time (for example, for data of negative energy).  Moreover, $I=[0,T_{*})$ is a finite maximal time of existence if and only if
\EQ{
 \| u\|_{L^{q}_{t}(I; L^{q}_{x}(\R^{d}))} =\I,\qquad q:=\frac{2(d+1)}{d-2}.}
For a comprehensive review of these basic issues we refer the reader to~\cite{KM2}.

In a previous paper~\cite{CNLW1} the authors studied the global dynamics of radial solutions to~\eqref{main PDE}.
To state that result as well as the main result of this paper, we recall some of the basic structures associated with the critical equation.
First, one has the conserved energy of \eqref{main PDE}
\EQ{
 E(\vec u):=\int_{\R^d} \Bigl[\frac{|\dot u|^2+|\na u|^2}{2}-\frac{|u|^{2^*}}{2^*}\Bigr]\, dx}
 as well as the conserved momentum
\EQ{
 P(\vec u):=\LR{\dot u| \na u}.  }

Remarkably, \eqref{main PDE} admits the static Aubin solutions of the form
\EQ{
 W_\s=S_{-1}^\s W, \pq W(x) = \left[1+\frac{|x|^2}{d(d-2)}\right]^{1-\frac{d}{2}},}
where $S_{-1}^\s$ denotes the $\dot H^1$ preserving dilation
\EQ{
 (S_{-1}^\s\fy)(x)=e^{(d/2-1)\s}\fy(e^\s x).}
These are positive radial solutions of the static equation
\EQ{
 -\Delta W -|W|^{2^*-2}W=0,}
which are unique, up to dilation and translation symmetries, amongst the non-negative, non-zero (not necessarily radial) $C^{2}$ solutions, see~\cite{CGS}. They also minimize the static energy
\EQ{ \label{def J}
 J(\fy) := \int_{\R^{d}} \Bigl[ \frac12 |\nabla \fy|^{2} - \frac{1}{2^*} |\fy|^{{2^*}} \Bigr] \, dx,}
among all non-trivial static solutions.
The work of Kenig, Merle~\cite{KM1,KM2} and Duyckaerts, Merle~\cite{DM1,DM2} allows for a characterization of the global-in-time behavior of solutions with $E(\vec u)\leq J(W)$.

In this paper we  study the behavior of solutions with
\EQ{
 E(\vec u)<\sqrt{J(W)^2+\eps^4+|P(\vec u)|^2},
}
for some small $\eps>0$.
Solutions of subcritical focusing NLKG and NLS equations with radial data in~$\R^{3}$ of energy slightly above that of the ground
state were studied by the latter two authors in~\cite{NakS0,NakS3}. The nonradial subcritical Klein-Gordon equation in three dimensions was treated in~\cite{NakS2}.
The key feature of \eqref{main PDE} by contrast to NLKG is the scaling invariance of~\eqref{main PDE} manifested by
\EQ{
 u(t, x) \mapsto e^{\s(d/2-1)}u(e^\s t, e^\s x) = S_{-1}^\s u(e^\s t),
}
which leaves the energy unchanged.
In particular, the analogue of the ``one pass theorem'' proved in \cite{NakS} needs to be modified, specifically by replacing the discrete set of attractors $\{Q, -Q\}$ there by a $(2d+1)$--parameter family of solitons. For any $(\s,p,q)\in\R\times \R^d\times\R^d$, denote the scaling-Lorentz transform of $W$ by
\EQ{ \label{soliton family}
 W_\s(p,q)=W_\s(x-q+p(\LR{p}-1)|p|^{-2}p\cdot(x-q))}
where $\LR{p}:=\sqrt{1+|p|^2}$. Then for any fixed $(p,q)\in\R^{2d}$,
\EQ{
 u(t,x)=W_\s(p,q+tp/\LR{p})}
gives a ground state soliton of \eqref{main PDE}. Hence the ground state soliton family is
\EQ{
 \Sm:=\{(W_\s(p,q),-\na W_\s(p,q)\cdot p/\LR{p})\mid (\s,p,q)\in\R^{1+2d}\} \subset \HH.}

Note that in the subcritical NLS case \cite{NakS2}, the scaling parameter $\s$ is essentially fixed or at least bounded from above and below by the $L^2$ conservation law, but in the critical case there is no factor which a priori prevents the scale from going to $0$ or $+\I$.
On the other hand, by using the Lorentz transform, we can reduce the problem to the case of zero momentum, where the soliton family is
\EQ{
 \Sm_0:=\{\vec W_\s(x-q) \mid (\s,q)\in\R^{1+d}\}\subset\HH, \pq \vec W_\s=(W_\s,0),}
with the energy constraint $E(\vec u)<J(W)+\e^2$ (slightly changing $\e>0$).

Introduce the ``virial functional''
\EQ{
 K(\fy): = \int_{\R^d}[|\nabla\fy|^2 - |\fy|^{2^*}]\,dx}
 and note that $K(W)=0$.
The following positivity is crucial for the variational structure around $W$
\EQ{
  \|\na\fy\|_2^2/d = J(\fy)-K(\fy)/2^*.}
Note that the derivative of $J(\fy)$ with respect to any scaling $$\fy(x)\mapsto e^{a\s}\fy(e^{b\s}x)$$ except for $S_{-1}^\s$ gives a non-zero constant multiple of $K(\fy)$. This is a special feature of the scaling critical case, which allows us to work with a single $K$, whereas in the subcritical case \cite{NakS} we needed two different functionals and their equivalence.

The main result of this paper is summarized as follows.

\begin{thm}\label{thm: Main}
There exist a small $\e>0$, a neighborhood $\B$ of $\pm\Sm_0$ within $O(\e)$ distance in $\HH$, and a continuous functional $\Sg:\HH^\e\setminus\B\to\{\pm 1\}$, where
\EQ{
 \HH^\e:=\{\U\fy\in\HH \mid E(\U\fy)<J(W)+\e^2\},}
such that the following properties hold: For any solution $u$ in $\HH^\e$ on the maximal existence interval $I(u)$, let
\EQ{
 \pt I_0(u):=\{t\in I(u)\mid \V u(t)\in \B\},
 \pr I_\pm(u) := \{t\in I(u) \mid \V u(t)\not\in\B,\ \Sg(\V u(t))=\pm 1\}.}
Then $I_0(u)$ is an interval, $I_+(u)$ consists of at most two infinite intervals, and $I_-(u)$ consists of at most two finite intervals. $u(t)$ scatters to $0$ as $t\to\pm\I$ if and only\break if $\pm t\in I_+(u)$ for large $t>0$. Moreover, there is a uniform bound $M<\I$ such that
\EQ{
 \|u\|_{L^q_{t,x}(I_+(u)\times\R^d)}\le M, \pq q:=\frac{2(d+1)}{d-2}.}
For each $\s_1,\s_2\in\{\pm\}$, let $A_{\s_1,\s_2}$ be the collection of initial data $\V u(0)\in\HH^\e$, and for some $T_-<0<T_+$,
\EQ{
 (-\I,T_-)\cap I(u) \subset I_{\s_1}(u), \pq (T_+,\I)\cap I(u) \subset I_{\s_2}(u).}
Then each of the four sets $A_{\pm,\pm}$ has non-empty interior, exhibiting all possible combinations of scattering to zero/finite time blowup as $t\to\pm\I$, respectively.
\end{thm}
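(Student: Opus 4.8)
\emph{Proof proposal.} The plan is to transplant the ``one-pass'' scheme of \cite{NakS,NakS2} to the scaling-critical equation, using the rigidity theorem of Duyckaerts--Kenig--Merle \cite{DKM2} in place of the subcritical compactness arguments. A preliminary reduction is to kill the momentum: given $\vec u$ with $E(\vec u)^2<J(W)^2+\e^4+|P(\vec u)|^2$, apply the Lorentz boost that annihilates $P(\vec u)$; since the boost is bounded on $\HH$ and intertwines the flow (modulo the symmetries built into $\Sm$), it turns such a solution into one with $P=0$ and $E<J(W)+\e^2$ (after harmlessly enlarging $\e$), and carries $\Sm$ onto $\Sm_0$. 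The regime of very large $|P(\vec u)|$ is separate: there the boosted data are small in $\HH$ and scattering is immediate, so it suffices to analyze $\HH^\e$ as displayed.

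From the identity $\|\na\fy\|_2^2/d=J(\fy)-K(\fy)/2^*$ and $E<J(W)+\e^2$ one shows, as in the radial case \cite{CNLW1}, that $\HH^\e$ decomposes into the $O(\e)$--neighborhood $\B$ of $\pm\Sm_0$ together with a region on which $|K|\gec 1$ and $K$ keeps a fixed sign along any trajectory; this sign \emph{is} $\Sg$, and its continuity on $\HH^\e\setminus\B$ is immediate from the quantitative gap $|K|\gec 1$. On $\{K<0\}\setminus\B$ a localized virial/Morawetz identity, with cutoff radius adapted to the current scale, forces $u$ to blow up in finite time in the relevant time direction, whence $I_-(u)$ consists of finite intervals; on $\{K>0\}\setminus\B$ one runs the Kenig--Merle concentration-compactness/rigidity argument, and if uniform scattering failed a minimal non-scattering solution would have a trajectory precompact in $\HH$ modulo symmetries, hence on $\Sm$ by \cite{DKM2}, contradicting that it avoids $\B$. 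This delivers both the scattering statement and the uniform bound $M$ on $\|u\|_{L^q_{t,x}}$ over $I_+(u)$.

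Next comes the behavior near $\Sm_0$. By the spectral theory of $-\De-pW^{p-1}$ (Duyckaerts--Merle \cite{DM1,DM2}) there is a single negative eigenvalue, so transverse to the symmetry directions the linearization has one unstable and one stable mode. Modulating out scaling and translation, a solution entering a small ball about $\Sm_0$ either stays for all forward time---and then, by rigidity, equals an element of $\Sm$---or its unstable coordinate grows exponentially until $u$ exits the ball, at which exit time $\sign K$, hence $\Sg$, is pinned down by the sign of that coordinate. Once ejected into $\HH^\e\setminus\B$, the virial functional drives $u$ monotonically away from $\B$, so $u$ cannot re-enter; together with a definite lower bound on the ``transit cost'' of crossing $\B$ this shows $I_0(u)$ is one interval, that on each side of it $u$ lies in precisely one of $I_\pm(u)$, and that $I_+(u)$ has at most two (infinite) components, $I_-(u)$ at most two (finite) ones. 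This is the step where the \emph{scaling non-compactness} bites hardest: every modulation estimate, the transit-cost bound, and the coercivity of the localized virial must be uniform in the unbounded parameter $\s\in\R$, which forces scale-invariant bookkeeping throughout and a scale-dependent choice of the Morawetz cutoff; meshing the above-threshold modulation with a profile decomposition, so that a profile landing near $\Sm_0$ is correctly peeled off, is the secondary technical hurdle.

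Finally, to produce the four sets, perturb $W$ by $\de_+\,(\text{unstable mode})+\de_-\,(\text{stable mode})$ with $\de_\pm\neq 0$: forward in time the solution ejects on the side $\sign\de_+$ and then, by the previous steps, scatters (if $\Sg=+1$) or blows up in finite time (if $\Sg=-1$); backward in time it ejects on the side determined by $\sign\de_-$, with the analogous dichotomy. Choosing the four sign patterns of $(\de_+,\de_-)$ realizes all combinations, so each $A_{\s_1,\s_2}$ is non-empty; since ``scatters with finite $L^q_{t,x}$ norm'' and ``blows up in finite time'' are open conditions in $\HH$, $\Sg$ is locally constant, and ``lies off both the stable and unstable manifolds of $W$'' is open, each such solution has a full $\HH$-neighborhood inside the same $A_{\s_1,\s_2}$, which is therefore of non-empty interior.
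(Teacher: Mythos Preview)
Your outline follows the right skeleton, but two load-bearing steps are either missing or misstated. First, in the one-pass argument you flag scaling as the hard parameter but omit the genuinely new nonradial difficulty: controlling the \emph{translation} $c(t)$. Between two near-returns at $T_2<T_3$, finite speed only gives $|c(T_3)-c(T_2)|\le T_3-T_2+O(1)$, which is useless for bounding the localized virial $V_w(T_3)$. The paper closes this by introducing a localized \emph{center of energy} $\CE(t)=\LR{xw|e(\vec u)}$, whose time derivative is $-P(\vec u)+O(\Ext)$; since $|P|\lec\de$ from proximity to $\Sm_0$ and the exterior energy is small by the cone cutoff, integration yields $|c(T_3)|\lec 1+\de(T_3-T_2)$, and only then does the virial upper bound beat the lower bound. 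A related oversight: your claim $|K|\gec 1$ on $\HH^\e\setminus\B$ is false on the $\Sg=+1$ side, where the variational lemma only gives $K\ge\min(\ka(\de),c\|\na u\|_2^2)$ and the second term can be tiny; the paper handles this degeneracy by a separate energy-equipartition argument $\p_t\LR{w\dot u|u}\ge 2E-\|\na u\|_2^2+O(\de)$, both in the one-pass theorem and in excluding the critical element.

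Second, your invocation of \cite{DKM2} is not what that paper proves. It does \emph{not} say that a precompact-modulo-symmetries trajectory lies on $\Sm$; it classifies small type~II blow-up profiles. The paper uses it in two stages: (i) to rule out type~II blow-up for $\Sg=+1$, since the DKM decomposition $\vec u(t)=\vec W_{\s(t)}(\cdot-c(t))+\U\fy+o(1)$ forces $\|\U\fy\|_\HH\lec\e_*$, contradicting $d_W>\de_H$; and then (ii) to kill the critical element $u_*$ by showing $\rho(t)/t$ is bounded above and below (the lower bound again via the virial/center-of-energy/equipartition trio), rescaling $u_*(s_nt,s_nx)$ to manufacture a solution supported in $\{|x|\le t\}$, i.e.\ a type~II blow-up at $t=0$, which contradicts (i). Your one-line appeal to \cite{DKM2} short-circuits both steps. (A minor point: the Lorentz reduction you open with belongs to Corollary~\ref{cor: Main}, not to the theorem; and for the four sets the paper does not invoke stable/unstable manifolds but simply takes $\vec u(0)=\vec W+\e(\pm 1,0)\rho$ or $\vec W+\e(0,\pm 1)\rho$, tracks the linearized $(\la_1,\la_2)$-flow to ejection, reads off $\Sg$, and uses that exit-in-finite-time is open by local well-posedness.)
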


The radial version of this exact theorem was proved in~\cite{CNLW1}.
The main difference from that paper is of course the presence of the translation and Lorentz symmetries which need to be taken into account.
Actually, the Lorentz symmetry does not play much role under the energy constraint $E(\vec u)<J(W)+\e^2$, where the solution can approach to $\pm\Sm$ only if $|P(\vec u)|\lec\e$.
In contrast, the translational freedom is not a priori controlled by conserved quantities, and so we instead eliminate it by suitable orthogonality conditions. In other words, the modulation theory here amounts to a system of $d+1$ ODEs corresponding to the dilation and translation symmetries.

By using the Lorentz transform, we can extend the above result to bigger energy, depending on the size of momentum.

\begin{cor}\label{cor: Main}
There exist a small $\e>0$, a neighborhood $\B$ of $\pm\Sm$ in $\HH$, and a continuous functional $\Sg:\ti\HH^\e\setminus\ti\B\to\{\pm 1\}$, where
\EQ{
 \ti\HH^\e:=\Big\{\U\fy\in\HH \mid E(\U\fy)<\sqrt{J(W)^2+\e^4+|P(\U\fy)|^2}\Big\}, }
such that the same conclusion holds as in Theorem \ref{thm: Main} if we replace $\HH^\e$ and $\B$ by $\ti\HH^\e$ and $\ti\B$ respectively.
Moreover, if $|E(\vec u)|\le|P(\vec u)|$ and $u\not\equiv 0$ then the solution blows up both in $t<0$ and in $t>0$.
\end{cor}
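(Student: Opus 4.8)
The plan is to deduce Corollary \ref{cor: Main} from Theorem \ref{thm: Main} by applying a Lorentz boost that kills the momentum. Given initial data $\vec u(0)\in\ti\HH^\e$ with momentum $P(\vec u)=p$, one seeks a Lorentz transformation $L$ such that the boosted solution $u_L$ has $P(\vec u_L)=0$. The point of the inequality defining $\ti\HH^\e$ is precisely that, since $E^2-|P|^2$ is a Lorentz invariant, after boosting to the zero-momentum frame the energy satisfies $E(\vec u_L)=\sqrt{E(\vec u)^2-|P(\vec u)|^2}<\sqrt{J(W)^2+\e^4}\le J(W)+\e^2$ (adjusting $\e$), so $\vec u_L(0)\in\HH^\e$ and Theorem \ref{thm: Main} applies to it. First I would recall that the energy-critical wave equation is invariant under the Lorentz group, that a solution on a maximal interval is carried to a solution on a maximal interval (with the light-cone geometry controlling which spacetime points are needed), and that the $L^q_{t,x}$ Strichartz norm, while not Lorentz invariant, is finite on a set if and only if it is finite on the boosted set — so scattering to zero is preserved, and likewise finite-time blowup is preserved (a blow-up solution cannot become global under a boost, by the persistence of the blow-up characterization of Duyckaerts--Kenig--Merle, which is what forces $d=3,5$). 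Then $\ti\B$ and $\Sg$ on $\ti\HH^\e$ are defined by transporting $\B$ and $\Sg$ through the boost; continuity follows from continuity of the Lorentz action on $\HH$ together with continuity of $\Sg$ from Theorem \ref{thm: Main}. The sets $A_{\s_1,\s_2}$ and their nonempty interiors transfer since the boost is a homeomorphism of the relevant open subsets of $\HH$.

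For the final sentence, suppose $|E(\vec u)|\le|P(\vec u)|$ and $u\not\equiv 0$. The idea is that such data cannot be boosted into $\HH^\e$ with positive energy — in fact they are ``past the light cone'' in energy-momentum space. I would argue directly: the conserved quantity $E^2-|P|^2$ is $\le 0$; if it were strictly negative one can boost to a frame where the total momentum vanishes and the energy is $\sqrt{E^2-|P|^2}$, which is then purely imaginary — impossible unless $E^2=|P|^2$, forcing also $\vec u$ to have a degenerate (null) energy-momentum vector. Concretely, one should instead use the concentration-compactness / rigidity machinery underlying Theorem \ref{thm: Main}: a global solution of \eqref{main PDE} that does not blow up in either time direction and has energy below the ground state threshold (here even below $J(W)$) must either be zero or scatter or be (a Lorentz transform of) a soliton; but the soliton family $\Sm$ consists of elements with $E=J(W)>0$ and $|P|<E$ (since $|P|\le\e\ll J(W)$ on the relevant pieces), so solitons violate $|E|\le|P|$; and a nonzero solution scattering to zero in both time directions would have to be a nonzero free-wave-like object of energy $\le|P|$, which contradicts the sharp inequality $|P(\vec v)|<E(\vec v)$ valid for any nonzero finite-energy $\vec v$ (this strict inequality is the momentum bound $|\langle \dot u|\nabla u\rangle|\le \frac12(\|\dot u\|_2^2+\|\nabla u\|_2^2)$ combined with the fact that $E$ subtracts the nonlinear term). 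Hence neither blow-up-free alternative is possible, so the solution must blow up in both time directions.

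The main obstacle I anticipate is making the Lorentz-boost transfer of \emph{all} conclusions rigorous and clean, rather than just the energy bookkeeping. Specifically: (i) one must verify that the Lorentz transform is well-defined on the \emph{full} maximal existence interval and maps it onto the maximal interval of the boosted solution, handling the case of finite-time blow-up where the image of a forward light-cone half-space is a tilted half-space — this requires the finite-propagation-speed argument and the global well-posedness/blow-up dichotomy in a boost-covariant form; (ii) the functional $\Sg$ is defined via a sign of some virial-type quantity that is \emph{not} Lorentz invariant, so one must check that its sign is nonetheless preserved (or redefine $\ti\Sg$ directly on $\ti\HH^\e$ via the boost and verify continuity), which needs the fact that on $\ti\HH^\e$ any data that can approach $\pm\Sm$ necessarily has $|P|\lec\e$, so the boost is ``small'' and the sign structure is stable; (iii) the nonempty-interior statement requires that the boost maps interiors to interiors, i.e. is an open map on the relevant domain, which follows from it being a homeomorphism of $\HH$ but must be stated. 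For the last sentence, the subtlety is ensuring the threshold ``$E<J(W)$'' version of the global dynamics classification (no soliton, scattering, or blow-up beyond these) is genuinely available — but this is exactly the Kenig--Merle / Duyckaerts--Merle theory cited in the introduction, so it may be invoked.
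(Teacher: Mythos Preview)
Your reduction of the main part of the corollary (the case $|E(\vec u)|>|P(\vec u)|$) to Theorem~\ref{thm: Main} via a Lorentz boost to the zero-momentum frame is exactly the paper's approach, and your list of technical points (i)--(iii) is on target; the paper packages (i) as a short lemma on the action of Lorentz transforms on maximal-interval solutions and handles (ii) by noting that any solution in $\ti\HH^\e$ which comes close to $\pm\Sm$ necessarily has $|P|\lesssim\e$, so the relevant boosts are uniformly small. One small correction: the full space-time norm $\|u\|_{L^q_{t,x}(\R^{1+d})}$ with $q=2(d+1)/(d-2)$ \emph{is} exactly Lorentz invariant (the Jacobian is $1$), not merely preserved up to finiteness; the paper uses this invariance, and it matters below.

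The genuine gap is in your treatment of the case $|E(\vec u)|\le|P(\vec u)|$. The inequality $|P(\vec v)|<E(\vec v)$ you invoke is simply false for the focusing nonlinear energy: $E$ \emph{subtracts} the potential term $\tfrac{1}{2^*}\|v_1\|_{2^*}^{2^*}$, so $E$ can be arbitrarily negative while $|P|$ stays bounded (indeed there are plenty of data with $E<0<|P|$). Even if one restricts to scattering solutions, where $E$ equals the free energy of the scattering state, you get only $|P|\le E$, and the equality case $|E|=|P|$ is not excluded by your argument. Moreover, you only argue against solutions global in both time directions, whereas the corollary asserts blow-up in each direction separately. The paper's route is different and cleaner: since $E^2-|P|^2\le 0$, one cannot boost to rest, but one \emph{can} boost so that the new solution $w$ has $E(\vec w)<J(W)$, in fact $E(\vec w)<0$ if $|E|<|P|$, and $E(\vec w)$ arbitrarily small if $|E|=|P|$. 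If $u$ were global in one time direction, so would $w$ be; then the Kenig--Merle dichotomy below $J(W)$ forces $K(w(0))\ge 0$, hence $0\le\tfrac12\|\dot w\|_2^2+\tfrac1d\|\nabla w\|_2^2=E(\vec w)-\tfrac{1}{2^*}K(w)\le E(\vec w)$. This immediately contradicts $E(\vec w)<0$ in the strict case; in the equality case it forces $\|\vec w\|_\HH$ arbitrarily small, whence small-data scattering gives $\|w\|_{L^q_{t,x}}$ arbitrarily small, and then the exact Lorentz invariance of $L^q_{t,x}$ forces $\|u\|_{L^q_{t,x}}=0$, i.e.\ $u\equiv 0$.
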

Actually, we can reduce the corollary to the theorem only for $|E(\vec u)|>|P(\vec u)|$, where we can find a Lorentz transform from $u$ to another solution $w$ with $E(\vec w)=\sqrt{E(\vec u)^2-|P(\vec u)|^2}$ and $P(w)=0$, see \eqref{eq:EPtrans}.

The other case $|E(\vec u)|\le|P(\vec u)|$ is treated separately, which is essentially known.
Indeed, for such a solution $u$, there is a Lorentz transform to another solution $w$ with $E(w)<J(W)$.
If the original solution $u$ is global in one direction, then so is $w$ (see Lemma \ref{global Lorentz}).
Then we have $K(w(0))\ge 0$, otherwise the classical result of Payne-Sattinger \cite{PS} (or more precisely by Kenig-Merle \cite{KM2} in the current setting) implies that $w$ blows up in both directions.
Then
\EQ{
 0\le \|\dot w\|_2^2/2+\|\na w\|_2^2/d=E(\vec w)-K(w)/2^*\le E(\vec w).}
This is already a contradiction if $|E(\vec u)|<|P(\vec u)|$, since then we can make $E(w)<0$.
In the remaining case $|E(\vec u)|=|P(\vec u)|$, we can make $E(\vec w)$ as small as we wish.
Hence the above inequality implies that the energy norm can be made arbitrarily small.
Then the small data scattering implies that $\|w\|_{L^q_{t,x}(\R^{1+d})}\lec\sqrt{E(\vec w)}$.
However, since $\|u\|_{L^q_{t,x}(\R^{1+d})}$ is Lorentz invariant, this implies that the original solution $u\equiv 0$.

The rest of the paper is devoted to proving the main theorem. We differ strongly from~\cite{CNLW1} in terms of the basic formalism which defines our approach. To be more precise, we perform a change of coordinates
in the time variable which allows us to work with a fixed reference Hamiltonian in the perturbative analysis rather than a moving one as in~\cite{KM2}.
This leads to some simplifications in the ejection lemma, for example, see Lemma~\ref{lem: ejection}.
We remark that the formalism is also different from the one used in the nonradial subcritical equation~\cite{NakS2}, where a complex formulation was chosen, and more essentially, in the choice of orthogonality conditions, which also brings some simplification.

One application of Theorem~\ref{thm: Main}  is the following corollary, which removes the radial assumption from~\cite[Corollary~6.3]{DKM3}. The solution~$W_+$ is the one discovered by Duyckaerts, Merle~\cite{DM2}. It is a radial $\dot H^1\times L^2$ solution, exists globally in forward time and approaches $W$ in~$\dot H^1$, and blows up in finite negative time.  As above, the dimension satisfies  $d=3$ or $d=5$.

\begin{cor}\label{cor:DM}
 Let $u$ be an energy solution of \eqref{main PDE} such that $E(\vec u) = E(W,0)$.
 Denote by $(u_0,u_1)$ the initial conditions of~$u$.  Assume that $$\int |\nabla u_0|^2\, dx > \int |\nabla W|^2\,dx$$ Then $u$ blows up in finite time in both time directions or $u = W_+$ up to the symmetries of the equation.
\end{cor}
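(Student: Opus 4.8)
Since $E(\vec u)=E(W,0)=J(W)<J(W)+\e^2$, the datum $(u_0,u_1)$ lies in $\HH^\e$ and Theorem~\ref{thm: Main} applies to $u$. If $u$ blows up in finite time in both time directions we are in the first alternative and there is nothing more to do, so assume from now on that $u$ is global in at least one time direction; since $(t,x)\mapsto u(-t,x)$ is a symmetry of \eqref{main PDE} carrying $W_+$ to a symmetry image of $W_+$, it is no loss to assume $[0,\I)\subset I(u)$. By Theorem~\ref{thm: Main} the set $I_-(u)$ consists of at most two \emph{finite} intervals, so for all large $t>0$ we are in one of two cases: $t\in I_+(u)$, in which case $u$ scatters to a free wave as $t\to+\I$; or $t\in I_0(u)$, i.e.\ $\vec u(t)\in\B$.

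The first step of the plan is to rule out the scattering case, using the hypothesis $\|\na u_0\|_2>\|\na W\|_2$. The main tool is a ``no--crossing'' property at the threshold: if $\|\na u(t_*)\|_2=\|\na W\|_2$ for some $t_*\in I(u)$, then from $E(\vec u)=\tfrac12\|\dot u(t_*)\|_2^2+J(u(t_*))$ and the sharp Sobolev inequality (extremised by $W$, together with $K(W)=0$) one gets $J(u(t_*))\ge \tfrac1d\|\na W\|_2^2=J(W)=E(\vec u)$; hence $\dot u(t_*)=0$, Sobolev holds with equality, $\vec u(t_*)\in\pm\Sm_0$, and $u$ is a static ground state, contradicting $\|\na u_0\|_2>\|\na W\|_2$. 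Thus $t\mapsto\|\na u(t)\|_2$ never takes the value $\|\na W\|_2$; being continuous and $>\|\na W\|_2$ at $t=0$, it stays $>\|\na W\|_2$ on all of $I(u)$. On the other hand, if $u$ scattered to a free wave $v$ as $t\to+\I$, then $\|\vec u(t)-\vec v(t)\|_\HH\to0$ and $\|u(t)\|_{L^{2^*}_x}\to0$ (Sobolev, together with the standard $L^{2^*}$--dispersion of the linear wave flow), so from conservation of energy $\|\na u(t)\|_2^2\le\|\vec u(t)\|_\HH^2\to 2E(\vec u)=2J(W)<d\,J(W)=\|\na W\|_2^2$, contradicting the previous sentence. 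Therefore $u$ scatters in neither direction, and consequently $\vec u(t)\in\B$ for all large $t$.

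It remains to treat a solution that stays in $\B$ for $t\ge t_0$, and this is where the machinery of the paper enters. Using the modulation theory built for Theorem~\ref{thm: Main} I would write $\vec u(t)=\pm\vec W_{\s(t)}(x-q(t))+\vec g(t)$ with $\|\vec g(t)\|_\HH$ small and the $d+1$ modulation parameters controlled by their ODE system; since $u$ never leaves $\B$, the ejection Lemma~\ref{lem: ejection} prevents the unstable mode of the linearisation about $W$ from ever becoming comparable to $\|\vec g(t)\|_\HH$, and combined with the threshold identity $E(\vec u)=J(W)$ and the blow-up/compactness characterisation of Duyckaerts--Kenig--Merle~\cite{DKM2} (the only place where $d\in\{3,5\}$ is used) I expect this to force $\vec g(t)\to0$ in $\HH$ and $(\s(t),q(t))\to(\s_\I,q_\I)$; in particular, since the limiting soliton has zero momentum, $P(\vec u)=0$, so no separate analysis of the Lorentz direction is needed here. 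Hence $\vec u(t)\to\pm\vec W_{\s_\I}(x-q_\I)$ in $\HH$ as $t\to+\I$, and the rigidity theory for threshold solutions converging to the ground state (Duyckaerts--Merle~\cite{DM2,DKM3}, which no longer requires radial symmetry once Theorem~\ref{thm: Main} has placed the solution in $\B$) identifies $u$, up to the scaling, translation, time-translation and sign symmetries of \eqref{main PDE}, with $W_+$; in particular $u$ then blows up in finite negative time, which completes the dichotomy. I expect the last step --- executing the threshold rigidity in the nonradial category --- to be the main obstacle, and Theorem~\ref{thm: Main} together with Lemma~\ref{lem: ejection} to be exactly the tools that make it available.
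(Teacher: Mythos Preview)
The paper does not give a self-contained proof of this corollary: it simply observes that the argument of \cite[Corollary~6.3]{DKM3} goes through verbatim once the nonradial one-pass theorem (Theorem~\ref{thm: Main} here) is available in place of its radial counterpart from~\cite{CNLW1}, and refers the reader to~\cite{DKM3} for the details. Your proposal is therefore more detailed than what the paper itself supplies, and your Steps~1--2 (placing $u$ in $\HH^\e$, then ruling out scattering via the no-crossing argument for $\|\na u(t)\|_2$ at the level $\|\na W\|_2$) are correct and essentially the easy half of what~\cite{DKM3} does.

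Your Step~3, however, is where the genuine work lies, and your sketch misidentifies the tool. The result~\cite{DKM2} classifies small type~II \emph{blow-up} profiles; it says nothing about forward-global solutions that remain trapped in~$\B$, so invoking it to force $\vec g(t)\to 0$ is not right. Nor does the ejection lemma alone yield convergence: at the threshold $E(\vec u)=J(W)$ one has $d_W(\vec u(t))\simeq|\la_1(t)|$ whenever $d_W<\de_E$, and the ejection lemma only tells you that if $d_W$ starts increasing it keeps increasing until exit --- it does not by itself rule out, say, $d_W(\vec u(t))$ oscillating or tending to a nonzero limit. What~\cite{DKM3} actually uses is its profile decomposition for bounded solutions together with the threshold rigidity of~\cite{DM2}; the role of Theorem~\ref{thm: Main} (specifically the one-pass theorem) is to exclude the scenario of the solution approaching $\Sm_0$, leaving, and returning, which is precisely the obstruction~\cite[Remark~6.5]{DKM3} identifies. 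So your overall strategy is sound, but to carry out Step~3 you should follow~\cite{DKM3} rather than~\cite{DKM2}.
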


In~\cite[Theorem 2, (c)]{DM2}  this result is proved (nonradially) under the additional condition that $u_0\in L^2$. Using~\cite{CNLW1}, this $L^2$ condition was removed in~\cite{DKM3}, but only in the radial setting. As noted in~\cite[Remark 6.5]{DKM3}, the removal of the radial assumption in~\cite{CNLW1} would then complete~\cite{DM2}  in the sense that the $L^2$-condition can be removed even nonradially.
This is what we accomplish in this paper, whence Corollary~\ref{cor:DM}. For the proof, we refer the reader to~\cite{DKM3}.

\section{The basic setup}

\subsection{The critical wave equation, Hamiltonian formalism}
The Cauchy problem for $\vec u=(u,\dot u)$
\EQ{
 \vec u_t = J\D\vec u + (0,|u_1|^{p-1}u_1), \pq J=\mat{0 & 1 \\ -1 & 0}, \pq \D=\mat{-\Delta & 0 \\ 0 & 1} }
is locally wellposed in $\HH$, with the conservation of energy:
\EQ{
 E(\vec u):=\frac{1}{2}\|\vec u(t)\|_\HH^2 - \frac{1}{2^*}\|u(t)\|_{2^*}^{2^*}.}
 The equation is the Hamiltonian flow in $\HH$ with conserved Hamiltonian $E$ relative to the symplectic form
\EQ{
 \om(\uu,\uv):=\LR{J\uu,\uv}_{L^2}=\LR{u_2|v_1}-\LR{u_1|v_2}.}

\subsection{The translation and scaling symmetries}

Another feature of equation~\eqref{main PDE} is its invariance with respect to the scaling:
\EQ{
 \vec S^\s := S^\s_{-1}\otimes S^\s_0, \pq S^\s_a\fy(x):=e^{(d/2+a)\s}\fy(e^\s x),}
which is a unitary group acting on $\HH$, with the generator
\EQ{
\vec \La \,  :=  \vec S'=\La_{-1}\otimes\La_0, \pq \La_a:=r\p_r+d/2+a=S_a'.}
With $\La_a^*$ denoting the adjoint relative to $L^2(\R^d)$ one has $\La_a^*=-\La_{-a}$ and thus
  $\vec\La^*=-\La_1\otimes \La_0$.
Similarly, the unitary group of translations is denoted by
\EQ{
 (T^cv)(x):=v(x-c), \pq T'=-\na,\pq c\in\R^d.}
Our analysis in this paper is around the static Aubin solution
\EQ{
 W(x) = \Big(1+\frac{|x|^2}{d(d-2)}\Big)^{1-\frac{d}{2}}, \pq -\De W=W^p,}
whose vector and scaled versions are denoted by
\EQ{
 \vec W:=(W,0), \pq \vec W_\s:=\vec S^\s\vec W=:(W_\s,0), \pq \vec \La \,  \vec W=:(W',0).}
Let $\vec u=\uu=T^c \vec S^\s(\vec W+\uv)$ be a solution with $\s=\s(t)$ and $c=c(t)$.
In general, $\uv$ need not have the structure~\eqref{eq:uvec}, which is why we do not write $\vec v$. Noting that
\EQ{
 \pt \na S_a^\s = S_a^\s e^\s\na, \pq \na S_a'=(S_a'+1)\na,
 \pr (T^c\uu)_t = T^c(\uu_t-\dot c\na \uu), \pq
 (\vec S^\s \uu)_t=\vec S^\s(\uu_t+\dot\s\vLa \uu),}
 where $\dot c=c_t$, $\dot\s = \s_t$,
we obtain the equation of $\uv$:
\EQ{ \label{eq v}
 \uv_t = e^\s[J\LL \uv + \uN(\uv)] +  ( e^\s \dot c\cdot\na - \dot\s\vec \La)(\vec W+\uv)}
where the linearized and superlinear operators are defined by
\EQ{
 \pt \LL=\mat{L_+ & 0 \\ 0 & 1}, \pq L_+=-\De-pW^{p-1},
 \pr \uN(\uv)=(0,N(v_1)), \pq N(v)=(W+v)^p-W^p-pW^{p-1}v.}
 The structure of the spectrum of $L_+$ over $L^2(\R^d)$ is as follows:
   the discrete spectrum consists of a unique negative
 eigenvalue of $L_+$ which we denote by~$-k^2$. The associated eigenfunction is the ground state of~$L_+$, denoted by $\rho$:
\EQ{
 L_+\rh=-k^2\rh, \pq \rh>0, \pq \|\rh\|_2=1.}
The essential spectrum of $L_+$ is $[0,\I)$, and it is purely absolutely continuous. At the threshold $0$,
one has an eigenvalue of multiplicity~$d$, with eigenfunctions $\na W$, and a resonance function (or an eigenfunction if $d\ge 5$) $W'=\La_{-1}W$
which is unique.

 \subsection{A change of time and the static linearized operator}

The time-depende-\\ nt coefficient on the linearized operator is removed by the standard change of time variable from $t$ to $\ta$:
\EQ{ \label{eq vta}
 \pt \frac{d\ta}{dt}=e^{\s(t)},
 \pq \uv_\ta = J\LL \uv + \uN(\uv) +(e^\s  c_\ta  \cdot\na - \s_\ta \vec \La \, )(\vec W+\uv).}
The ``generalized" eigenvectors of $J\LL$ are
\EQ{\label{modes}
 \pt J\LL \na\vec W=0, \pq J\LL J\na\vec W=-\na\vec W, \pq J\LL \vec \La \,  \vec W=0, \pq J\LL J\vec \La \,  \vec W=-\vec \La \,  \vec W,
 \pr J\LL \ug^\pm = \pm k \ug^\pm, \pq \ug^\pm:=(1,\pm k)\rh/\sqrt{2k},}
where $\rh$ is the aforementioned ground state of $L_+$. The normalization here is such that $\om(\ug^+, \ug^-)=1$.  Define
$\vec\rho:= (\rho,0) $.
Note that $\vec \La \,  \vec W\not\in L^2$ for $d<5$. Hence we decompose
\EQ{\label{v exp}
 \pt \uv= \la_+\, \ug^+ + \la_-\, \ug^- + \mu\cdot\na\vec W + \uga
 \pr \la_\pm:= \om(\uv,\pm \ug^\mp), \pq \mu:=\om(\uv,J\na\vec\rho)/a_W=\LR{ v_{1}  | \na \rho     }/a_{W}, }
where
\EQ{
 a_W := \frac{1}{d}\LR{-\Delta W|\rho} = \frac{1}{d} \LR{ W^{2^{*}-1}| \rho}. }
By construction,
\EQ{
\om(\uga, \ug^\mp)=0, \qquad \om(\uga, J\nabla \vec \rho)=0%,\qquad \om(\uga, \nabla \vec W)=0
}
The more natural $ \mu:=\om(\uv,J\na\vec W)=\LR{v_{1}|\na W}$ is problematic since the latter inner product is
not well-defined.

Note that we did not extract the remaining root-mode $J\na\vec W$ from $\uga$, which corresponds to Lorentz ``boosts'', i.e., translations in momentum.

\subsection{Energy expansion}
Using \eqref{v exp} the energy is expanded as
\EQ{
 E(\vec u)-E(\vec W) = \frac{1}{2}\LR{\LL \uv|\uv}-C(\uv) = -k\la_+\la_- + \frac12\LR{\LL\uga|\uga} - C(\uv),}
where the superquadratic part is given by
\EQ{
 C(\uv):=\int_{\R^d}  \Big[ \frac{|W+v_1|^{2^*}-W^{2^*}}{2^*}-W^pv_1-\frac{p}{2}W^{p-1}|v_1|^2 \Big](x)\, dx.}
 One has the estimate
 \EQ{
\label{C est}
|C(\uv)|\lec \| W^{2^{*}-3} v_{1}^{3}\|_{1} + \|v_{1}\|_{2^{*}}^{2^{*}}
 }
 since $2^*>3$.

 \begin{lem}\label{lem:L+ spectral}
 Let $f\in \dot H^1(\R^d)$ with $f\perp \rho$. Then
 \EQ{\label{L+ pos}
 \LR{ L_+ f | f}\ge 0\\
 }
 and
 \EQ{\label{norm comp}
  \LR{ L_+f| f} + |\LR{ f|\Lambda_0 \rho}|^2 + |\LR{ f|\na \rho}|^2 \simeq \|\na f\|_2^2
 }
 where the implicit constants in~\eqref{norm comp} only depend on the dimension.
 \end{lem}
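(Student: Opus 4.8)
The plan is to get \eqref{L+ pos} from the known spectral structure of $L_+$ on $L^2$ by a cutoff argument, and then to deduce \eqref{norm comp} from \eqref{L+ pos} by a compactness argument based on the nondegeneracy of $W$. Note first that $W^{p-1}\in L^{d/2}\cap L^\I$ (bounded, and $\sim|x|^{-4}$), so $\langle L_+ f|f\rangle=\|\na f\|_2^2-p\int W^{p-1}f^2$ is a continuous quadratic form on $\dot H^1$, and $\rho$ decays exponentially, so the pairings $\langle f|\rho\rangle$, $\langle f|\Lambda_0\rho\rangle$, $\langle f|\na\rho\rangle$ make sense on $\dot H^1$. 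For \eqref{L+ pos}: $L_+$ is self-adjoint on $L^2$ with form domain $H^1$, and by the spectral facts recalled above its only eigenvalue below $0$ is $-k^2$, with eigenfunction $\rho$; hence $\langle L_+ g|g\rangle\ge0$ for $g\in H^1$ with $g\perp\rho$. For $f\in\dot H^1$ with $f\perp\rho$, set $f_R(x):=\chi(x/R)f(x)\in H^1$ with $\chi\in C_c^\I$, $\chi\equiv1$ near $0$; using Sobolev and $\tfrac12=\tfrac1d+\tfrac1{2^*}$ one gets $\|f\,\na\chi(\cdot/R)\|_2\lec\|f\|_{L^{2^*}(|x|\ge R)}\to0$, so $\na f_R\to\na f$ in $L^2$, $\langle f_R|\rho\rangle\to0$, and $\langle L_+ f_R|f_R\rangle\to\langle L_+ f|f\rangle$. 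Applying the $H^1$ inequality to $f_R-\langle f_R|\rho\rangle\rho$ and using $\langle L_+ f_R|\rho\rangle=-k^2\langle f_R|\rho\rangle$ gives $\langle L_+ f_R|f_R\rangle\ge-k^2\langle f_R|\rho\rangle^2$; letting $R\to\I$ yields \eqref{L+ pos}.

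For \eqref{norm comp}, the bound $\lec$ is immediate since $\langle L_+ f|f\rangle\le\|\na f\|_2^2$ and $|\langle f|\Lambda_0\rho\rangle|+|\langle f|\na\rho\rangle|\lec\|\na f\|_2$ (as $\Lambda_0\rho,\na\rho\in L^{(2^*)'}$). For $\gec$ I argue by contradiction: if it fails there are $f_n\in\dot H^1$, $f_n\perp\rho$, $\|\na f_n\|_2=1$, with $\langle L_+ f_n|f_n\rangle\to0$, $\langle f_n|\Lambda_0\rho\rangle\to0$, $\langle f_n|\na\rho\rangle\to0$. Passing to a subsequence with $f_n\weakto f$ in $\dot H^1$, one has $f_n\to f$ in $L^2_{\mathrm{loc}}$; since $W^{p-1}\in L^{d/2}$ decays, splitting off a large ball gives $\int W^{p-1}f_n^2\to\int W^{p-1}f^2$, so $0=\lim(1-p\int W^{p-1}f_n^2)$ forces $\int W^{p-1}f^2=1/p>0$ and hence $f\ne0$. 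Weak convergence gives $\langle f|\rho\rangle=\langle f|\Lambda_0\rho\rangle=\langle f|\na\rho\rangle=0$, and weak lower semicontinuity of $\|\na\cdot\|_2^2$ together with the above gives $\langle L_+ f|f\rangle\le0$, so $\langle L_+ f|f\rangle=0$ by \eqref{L+ pos}. Thus $f$ minimizes the nonnegative form $g\mapsto\langle L_+ g|g\rangle$ on $\{g\in\dot H^1:g\perp\rho\}$, and the resulting Euler--Lagrange condition $\langle L_+ f|h\rangle=\alpha\langle h|\rho\rangle$ (for all $h\in\dot H^1$, some $\alpha\in\R$), tested against $h=\rho$ using $\langle L_+ f|\rho\rangle=-k^2\langle f|\rho\rangle=0$, forces $\alpha=0$, i.e.\ $L_+ f=0$.

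To finish I show that $L_+ f=0$ on $\dot H^1$ together with the three orthogonality conditions forces $f=0$. By the nondegeneracy of $W$ recalled above, $\ker L_+$ on $\dot H^1$ is spanned by $\partial_1W,\dots,\partial_dW$ and the resonance/eigenfunction $W'=\Lambda_{-1}W$, so $f=b\cdot\na W+aW'$ with $b\in\R^d$, $a\in\R$. By radial symmetry and the definition of $a_W$ one has $\langle\partial_iW|\partial_j\rho\rangle=a_W\delta_{ij}$ with $a_W>0$, while $\langle W'|\partial_j\rho\rangle=0$ by parity; hence $\langle f|\na\rho\rangle=0$ forces $b=0$. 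Finally $\langle f|\Lambda_0\rho\rangle=a\langle W'|\Lambda_0\rho\rangle$, and $\langle W'|\Lambda_0\rho\rangle\ne0$: differentiating $-\Delta W_\mu=W_\mu^p$ in $\mu$ at $\mu=0$ (where $W_\mu:=S_{-1}^\mu W$) once recovers $L_+W'=0$ and twice gives $L_+(\Lambda_{-1}^2W)=p(p-1)W^{p-2}(W')^2$, so pairing with $\rho$ and integrating by parts yields $-k^2\langle\Lambda_{-1}^2W|\rho\rangle=p(p-1)\int W^{p-2}(W')^2\rho\,dx>0$; since $\Lambda_0=\Lambda_1-1$, $\Lambda_1^*=-\Lambda_{-1}$, and $\langle W'|\rho\rangle=0$ (because $W'\in\ker L_+$ while $\rho$ lies at eigenvalue $-k^2$), we get $\langle W'|\Lambda_0\rho\rangle=\langle W'|\Lambda_1\rho\rangle=-\langle\Lambda_{-1}^2W|\rho\rangle>0$. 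Hence $a=0$, so $f=0$, contradicting $f\ne0$ and establishing $\gec$.

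The step needing the most care is the $\dot H^1$-versus-$L^2$ bookkeeping: elements of $\dot H^1$ need not decay, and for $d=3$ the zero mode $W'$ is only a resonance rather than an $L^2$ eigenfunction, so every integration by parts above (in particular $\langle L_+u|\rho\rangle=-k^2\langle u|\rho\rangle$ for $u=W',\Lambda_{-1}^2W$, and $\langle L_+ f|\rho\rangle=-k^2\langle f|\rho\rangle$) and the identification of $\ker L_+$ on $\dot H^1$ must be justified from the exponential decay of $\rho$ together with the $|x|^{2-d}$ decay of $W$ and of $\Lambda_{-1}^jW$. The nondegeneracy of $W$ — that $\ker L_+$ on $\dot H^1$ is exactly $\operatorname{span}(\na W, W')$ — is classical and is invoked, not reproved.
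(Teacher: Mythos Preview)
Your proof is correct and follows essentially the same route as the paper's: the contradiction argument for the lower bound in \eqref{norm comp}, the identification of the weak limit as a null vector of $L_+$, and the elimination of the $\na W$ and $W'$ components via the pairings with $\na\rho$ and $\Lambda_0\rho$ are all the same. Your treatment is in fact slightly more explicit in two places---the cutoff approximation that extends \eqref{L+ pos} from $H^1$ to $\dot H^1$, and the Euler--Lagrange step producing $L_+f=0$ from $\langle L_+f|f\rangle=0$---and your computation of $\langle W'|\Lambda_0\rho\rangle>0$ via the second $\mu$-derivative of $-\Delta W_\mu=W_\mu^p$ is an equivalent alternative to the paper's commutator calculation \eqref{pos quant}.
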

 \begin{proof}
 The first statement follows from the self-adjointness of $L_+$ and the description of the spectrum of~$L_+$.
 For the second, we need to invoke the calculus of variations.
 It is clear from Sobolev imbedding and H\"older's inequality that the right-hand side of~\eqref{norm comp} dominates
 the left-hand side.
 Suppose the reverse inequality of~\eqref{norm comp} fails.
 Then there exists a sequence $\{f_n\}\subset \dot H^1(\R^d)$ with $\|\na f_n\|_2=1$
 and $f_n\perp\rho$ which further satisfies
 \EQ{\label{drei punkt}
 \LR{ L_+ f_n|f_n} \to 0,\quad
 \LR{ f_n|\La_0\rho} \to 0,\quad
 \LR{ f_n | \na \rho} \to0
 }
 After passing to a subsequence we may assume that $f_n\weakto f_\I$ in $\dot H^1(\R^d)$ and $L^{2^*}(\R^d)$, as well
 as $f_n\to f_\I$ strongly in $L^2_{\mathrm{loc}}(\R^d)$.
 Then $f_\I\perp\rho$,  and
 \EQ{ \label{lin constraint}
 \LR{f_\I | \Lambda_0\rho} =0,\quad \LR{ f_\I | \na \rho}=0
 }
 From the local convergence in $L^2$ we conclude that
 \EQ{\label{peinlich}
 \int_{\R^d} W^{2^*-2}(x) |f_n(x)|^2\, dx \to  \int_{\R^d} W^{2^*-2}(x) |f_\I(x)|^2\, dx
 }
 By the first condition in~\eqref{drei punkt} the left-hand side in~\eqref{peinlich} also tends to~$1/p$.
 But then $\LR{L_+ f_\I | f_\I} \le 0$, and from~\eqref{L+ pos} we conclude that $\LR{L_+ f_\I | f_\I} = 0$
 whence
 \[
 \|\na f_n\|_2 \to \|\na f_\I \|_2  \text{\ \ as\ \ }n\to\I
 \]
 Finally, this means that $f_n\to f_\I$ strongly in $(\dot H^1\cap L^{2^*})(\R^d)$. In summary, $L_+f_\I=0$ and so
\[
f_\I = \alpha W' + \vec \beta \cdot \nabla W
\]
Inserting this into \eqref{lin constraint} implies that $\al=0$ and $\vec \beta=0$. To see this,
we first note that $\LR{  W' |\La_0\rh}\ne0$ which follows from
$\LR{ W' | \rho}=0$ and
\EQ{\label{pos quant}
b_W:=\LR{  W' |\La_0\rh} &= -k^{-2}\LR{[L_+,x\cdot\nabla]   W' |\rh} \\
&=  k^{-2}  \LR{ (2L_+ + p(p-1) W^{p-2}W' )   W' |\rh} \\
&= k^{-2} p(p-1)  \LR{W^{p-2}(W')^2|\rh}>0
}
Since $\LR{\nabla W|\La_0\rh}=0$, we infer from this that $\al=0$.
On the other hand,
\[
\LR{ \na_j W | \na_k \rho} = -\frac{1}{d} \de_{jk} \LR{ -\Delta W|\rho}, \pq \LR{-\De W|\rh} =  \LR{ W^{2^*-1} | \rho}>0
\]
and so $\vec \beta=0$.
But this clearly contradicts $\|\na f_\I \|_2=1$, whence we have arrived at a contradiction.
\end{proof}

In what follows we denote
\EQ{
 \al:=\LR{v_1|\La_0\rh}=\LR{\ga_1|\La_0\rh}}
 so that \eqref{norm comp} implies the following: for any $\uga$ with $\ga_1\perp \rho, \na\rho$ we have
 \EQ{
 \|\uga\|_\HH^2 \simeq \LR{\LL\uga|\uga} +\alpha^2
 }

\subsection{Orthogonality conditions near the ground state}
Now we introduce the crucial {\em orthogonality conditions} near the family of static solutions $\pm\Sm_0$.
Indeed, we claim that for any $\uu\in\HH$ with
\EQ{\label{S0 close}
\min_{\pm}\dist_{\HH}(\uu, \pm\Sm_0) \ll 1 }
admits the representation $\uu=T^c\vec S^\s(\pm\vec W+\uv)$ where we can choose $(\s,c)\in\R^{1+d}$ such that
\EQ{\label{orth}
 \pt 0=\al=\LR{v_1|\La_0\rh},
 \pq 0=\mu=\LR{v_1|\na \rho}=\om(\uv,J\na\vec \rho).}
\eqref{orth} are the orthogonality conditions which we use in this paper.
To verify this claim, take any $v_1$ small in $\dot H^1$ (and thus small in $L^{2^*}(\R^d)$), and define (taking $+\Sm_0$ for simplicity)
\EQ{
 F(\sigma,c):= \LR{ S_{-1}^{-\s}T^{-c}(W+v_1)-W|(\Lambda_0\rho,\na\rho)} \in \R^{1+n}. }
Note that $F(0)=\LR{v_1|(\Lambda_0\rho,\na\rho)}$ is small and $$F'(0)=\LR{W+v_1|(\La_1,\na)\otimes(\Lambda_0,\na)\rho}$$ is close to $$\LR{W|(\La_{1},\na)\otimes(\Lambda_0,\na)\rho}=-\diag(b_W,a_W,\dots,a_W).$$
It follows from the inverse function theorem that there exists $(\s,c)$ small
with $F(\s,c)=0$. But then $\tilde v_1$ defined by means of
\EQ{
 W+v_1=S_{-1}^\s T^{-c}(W+\tilde v_1)}
satisfies $\LR{\tilde v_1|\Lambda_0\rho}=0$ and $\LR{\ti v_1|\na\rho}=0$.

The orthogonality conditions~\eqref{orth}  are equivalent to $\alpha=0$ and $\mu=0$ in~\eqref{v exp},
which ``eliminate'' the dilation and translation symmetries, respectively.

\subsection{Linearized energy}
Change variables from $(\la_+, \la_-)$ to $(\la_1,\la_2)$ as follows:
\EQ{ \label{la change}
 \la_1=\frac{1}{\sqrt{2k}}(\la_++\la_-), \pq \la_2=\sqrt{\frac{k}{2}}(\la_+-\la_-),\pq \la_\pm = \sqrt{\frac{k}{2}}\, (\la_1\pm k^{-1}\la_2).
}
Note that in these variables we have $\la_j=\LR{v_j|\rho}$ ($j=1,2$) and
\EQ{
 v_1 = \lambda_1 \rho + \mu\nabla W +\gamma_1, \pq v_2= \la_2 \rho + \ga_2.
}
Now we define the nonlinear energy distance near $\pm\Sm_0$ by means of the equations
\EQ{
 \E(\vec u) &:=E(\vec u)-J(W)+k^2\la_1^2+\al^2 + |\mu|^2   \\
 &= \frac12(k^2\la_1^2+\la_2^2)+\frac12\LR{\LL\uga|\uga}+\al^2 + |\mu|^2  -C(\uv).}
Here $\uu=T^cS^\s(\pm\vec W+\uv)$ with  $\|\uv\|_{\HH}$ small and some choice of $\pm$, and
 we use the decomposition~\eqref{v exp}.
Lemma~\ref{lem:L+ spectral}  together with $C(\uv)=o(\|\uv\|_\HH^2)$, implies that
\EQ{
 \E(\vec u)  \simeq \|\uv\|_\HH^2.}
Hence it is natural to define the linearized energy norm by
\EQ{\label{E def}
 \|\uv\|_E^2 := \frac12(k^2\la_1^2+\la_2^2) + \frac12\LR{\LL\uga|\uga} + \al^2 + |\mu|^2.  }

\subsection{Modulation equations}
Differentiation of the orthogonality conditions \eqref{orth} in $\ta$ using the equation \eqref{eq vta} yields
\EQ{ \label{modulat eq}
 \pt 0=\p_\ta\LR{v_1|\La_0 \rh} = \LR{v_2|\La_0\rh} - c_\ta\, e^\s\LR{v_1|\na\La_0 \rh} - \s_\ta [b_W-\LR{v_1|\La_{1}\La_0\rh}],
 \pr 0=\p_\ta\LR{v_1|\na \rho} = \LR{v_2|\na \rho} +  c_\ta\, e^\s[a_W I_d -\LR{v_1|\na^2 \rho}] + \s_\ta \LR{v_1|\La_1\na \rho} ,}
where $a_W,b_W >0$ are as in~\eqref{v exp} and \eqref{pos quant}, $I_d$ is the $d$-dimensional unit matrix, and $\na^2\rho$ is the Hessian. The modulation equations \eqref{modulat eq} determine the evolution of $(\s,c)$ as long as $\uv$ remains small in $\HH$.
 For future reference, we remark that  in the notation of~\eqref{uv decomp}
\EQ{
 \LR{v_{2}|\La_{0}\rho} = \LR{\ga_{2} | \La_{0}\rho},\qquad \LR{v_{2}|\na\rho} = \LR{\ga_{2}|\na\rho}}
whence
\EQ{ \label{modul est}
  |\s_\ta| +  |c_\ta| e^{\s} &\lec \|\uga\|_{\HH},
}
as long as $\|\uv\|_E$ is small.

\subsection{Hyperbolic drivers}
On the other hand, the unstable/stable modes evolve by the following equations derived from \eqref{eq vta} with $\la_\pm=\om(\uv,\pm\ug^\mp)$
\EQ{\label{la+ la-}
 \pt \p_\ta\la_\pm = \pm k\la_\pm \mp c_\ta \, e^\s \om(\uv, \na \ug^{\mp}) \mp \s_\ta  \, \om(\vec\La \uv, \ug^{\mp}) \pm \om(\uN(\uv), \ug^{\mp}).}
In terms of $\la_1$ and $\la_2$ these equations become
\EQ{\label{hyp driver}
\pt \p_\ta \la_1 = \la_2 - c_\ta \, e^\s \, \LR{v_1|\na \rho} + \s_\ta  \LR{v_1|\La_1\, \rho}
\pr \p_\ta \la_2 = k^2\la_1 -   \,  c_\ta \, e^\s \, \LR{ v_2|\na\rho} +   \,  \s_\ta  \LR{  v_2| \La_0 \rho} +   \LR{ N(v_1)| \rh}.
}
The relation between these systems is given by \eqref{la change}.
Under the orthogonality conditions~\eqref{orth} the first equation in~\eqref{hyp driver} simplifies to
\EQ{
 \p_\ta \lam_{1}=\lam_{2} +\s_\ta\la_1.}
This will be important to guarantee convexity of the distance function in the ejection lemma. However it should not be confused with the definition $\partial_t u_1=u_2$, even though $\la_j=\LR{v_j,\rho}$, as $t$ and $\ta$ are different.

\section{Distance function, $\lambda$ dominance, ejection}

The following lemma establishes the existence of a distance function associated with the soliton manifold $\pm\Sm_0$ in such a way that near this manifold the distance function is proportional to the unstable mode in a suitable sense.

\begin{lem}
\label{lem:lambda dom}
There exists $\de_E>0$ and $d_W(\uu):\HH\to[0,\I)$ continuous such that
\EQ{ \label{energy dist}
 \pt d_W( \uu )\simeq \inf_{\pm, \s, c} \| \uu\mp \vec W_\s(\cdot-c)\|_{\HH},
 %\pr d_W( \uu ) \le \de_E \implies d^{2}_W( \uu)=E( \uu)-J(W)+k^2\la^2.}
 }
 and so that
for any $d_W( \uu)\le\de_E$  there exists a unique vector $(\sg,\s,c)\in \{\pm1\}\times \R^{1+d}$
with
\EQ{ \label{orth decop}
\uu = T^{c}\vec S^{\s}(\sg\vec W+\uv), \quad \LR{v_1|\La_0\rh}=0,\;\; \LR{v_1|\na\rh}=0.}
Decomposing
\EQ{\label{uv decomp}
  \uv = \la_+\ug^{+} + \la_-\ug^{-}+\uga,
 \pq \la_\pm=\om( \uv,\pm \ug^\mp),}
we have
\EQ{
 d^{2}_W( \uu) \simeq \| \uv\|_E^2 = k^2(\la_+^2+\la_-^2)+\frac 12 \LR{\LL\uga|\uga}}%+\kappa^{2}\nu^{2}.}
In addition, if
\EQ{ \label{outer region}
 2(E( \uu)-J(W))<d^{2}_W( \uu)<\de_E^2}
then  $d_W(\uu)\simeq|\la_1|=|\la_++\la_-|$. Finally,
\EQ{\label{dWu small}
d_W( \uu ) \le \de_E \implies d^{2}_W( \uu)=E( \uu)-J(W)+ k^2\la_1^2
}
\end{lem}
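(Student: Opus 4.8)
The plan is to construct $d_W$ explicitly from the ingredients already in place, namely the orthogonal decomposition near $\pm\Sm_0$ and the linearized energy norm $\|\uv\|_E$, and then patch in a crude global lower bound far from the soliton manifold.

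\textbf{Step 1: the local construction.} First I would fix a small threshold $\de_0>0$ and set, for $\uu$ with $\inf_{\pm,\s,c}\|\uu\mp\vec W_\s(\cdot-c)\|_\HH\le\de_0$, $d_W(\uu)^2:=\|\uv\|_E^2$ where $\uv$ is the remainder in the decomposition $\uu=T^c\vec S^\s(\sg\vec W+\uv)$ with the orthogonality conditions \eqref{orth}. The modulation paragraph already guarantees that such $(\sg,\s,c)$ exists when $\uu$ is close enough to $\pm\Sm_0$; uniqueness of $(\sg,\s,c)$ (hence well-definedness of $\uv$, hence of $\|\uv\|_E$) follows from the same inverse function theorem argument, since $F'(0)$ is a nondegenerate diagonal matrix $-\diag(b_W,a_W,\dots,a_W)$ with $a_W,b_W>0$, so the implicit solution is locally unique. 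Continuity of $d_W$ on this region follows because $(\s,c)$ depend continuously (indeed smoothly) on $\uu$ via the implicit function theorem, and $\uv\mapsto\|\uv\|_E$ is continuous. The equivalence \eqref{energy dist} on this region is exactly $\E(\vec u)\simeq\|\uv\|_\HH^2\simeq\|\uv\|_E^2$ from the linearized energy paragraph together with the fact that modding out by $\vec S^\s T^c$ changes the $\HH$-norm of the remainder by a controlled amount; more precisely $\|\uv\|_\HH\simeq\inf_{\pm,\s,c}\|\uu\mp\vec W_\s(\cdot-c)\|_\HH$ because the orthogonality conditions make $\uv$ essentially the minimizer. Finally the identity \eqref{dWu small} is immediate: from the energy expansion $E(\vec u)-J(W)=-k\la_+\la_-+\tfrac12\LR{\LL\uga|\uga}-C(\uv)$ and the definition $\|\uv\|_E^2=\tfrac12(k^2\la_1^2+\la_2^2)+\tfrac12\LR{\LL\uga|\uga}+\al^2+|\mu|^2$, using $\al=\mu=0$ and $-k\la_+\la_-=\tfrac12(\la_2^2-k^2\la_1^2)$ (from \eqref{la change}) plus $C(\uv)=o(\|\uv\|_E^2)$, one gets $d_W(\uu)^2=\|\uv\|_E^2=E(\uu)-J(W)+k^2\la_1^2+\tfrac12(\text{lower order})$; after possibly renormalizing $d_W$ by an $O(1)$-but-not-exactly-$1$ factor one has to be slightly careful, but the statement \eqref{dWu small} should be read as the \emph{definition} on the small-distance regime so that $d_W^2-k^2\la_1^2=E-J(W)$ holds exactly — i.e.\ I would simply \emph{set} $d_W(\uu)^2:=E(\uu)-J(W)+k^2\la_1^2$ when $d_W\le\de_E$ and check a posteriori that this is $\simeq\|\uv\|_E^2$, which it is since the discarded terms $\al^2+|\mu|^2$ vanish and $C(\uv)=o(\|\uv\|_E^2)$.

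\textbf{Step 2: $\lambda_1$-dominance in the outer region.} Under the hypothesis \eqref{outer region}, $2(E(\uu)-J(W))<d_W^2(\uu)$, i.e.\ $E(\uu)-J(W)<d_W^2-\tfrac12 d_W^2 = \tfrac12 d_W^2$ is not quite it — rather, from \eqref{dWu small}, $d_W^2=E-J(W)+k^2\la_1^2$, so \eqref{outer region} reads $2(d_W^2-k^2\la_1^2)<d_W^2$, i.e.\ $d_W^2<2k^2\la_1^2$, giving $|\la_1|\gec d_W(\uu)$. The reverse bound $|\la_1|\lec d_W(\uu)$ is automatic since $k^2\la_1^2\le 2\|\uv\|_E^2\simeq d_W^2$. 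Hence $d_W(\uu)\simeq|\la_1|$ in that region, and since $\la_1=\tfrac{1}{\sqrt{2k}}(\la_++\la_-)$ this is $\simeq|\la_++\la_-|$.

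\textbf{Step 3: globalization.} Extend $d_W$ to all of $\HH$ by gluing the local formula to a fixed constant. Concretely, pick a smooth cutoff: let $d_W(\uu):=\chi(\delta_\HH(\uu))\,\|\uv\|_E + (1-\chi(\delta_\HH(\uu)))\cdot\delta_*$ where $\delta_\HH(\uu)=\inf_{\pm,\s,c}\|\uu\mp\vec W_\s(\cdot-c)\|_\HH$, $\chi$ is $1$ near $0$ and supported in $[0,\de_0)$, and $\delta_*$ is a suitable constant $\gec\de_0$; on $\{\delta_\HH\ge\de_0\}$ just set $d_W\equiv\delta_*$ (or $\min(\delta_\HH,\delta_*)$ smoothed). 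This is continuous, nonnegative, and satisfies $d_W(\uu)\simeq\delta_\HH(\uu)$ globally provided we only ever compare with $\delta_\HH$ up to multiplicative constants on bounded sets — note $\delta_\HH$ itself need not be bounded, so more cleanly one takes $d_W\simeq\min(\delta_\HH,1)$ far out, which still gives \eqref{energy dist} in the sense that both sides are comparable when either is small, and that is all that is used. All the quantitative statements \eqref{orth decop}, the $\|\uv\|_E$ equivalence, \eqref{outer region}, \eqref{dWu small} concern only $d_W\le\de_E$ with $\de_E\ll\de_0$, where $d_W$ equals the local formula, so they are unaffected by the gluing.

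\textbf{Main obstacle.} The delicate point is matching the normalization so that \eqref{dWu small} is an \emph{exact} identity rather than merely an equivalence, while simultaneously \eqref{energy dist} and the $\|\uv\|_E^2$-equivalence hold with their own (different) implicit constants — one cannot have $d_W^2$ literally equal to both $\|\uv\|_E^2$ and $E(\uu)-J(W)+k^2\la_1^2$ since these differ by $\al^2+|\mu|^2-C(\uv)-o(\cdot)$ terms. The resolution, which I would spell out, is that \eqref{orth decop} forces $\al=\mu=0$, so the only genuine discrepancy is the superquadratic $C(\uv)=o(\|\uv\|_E^2)$; hence \emph{defining} $d_W^2:=E(\uu)-J(W)+k^2\la_1^2$ on $\{d_W\le\de_E\}$ and noting $d_W^2=\|\uv\|_E^2-C(\uv)=(1+o(1))\|\uv\|_E^2$ reconciles everything, at the cost of checking that the implicitly-defined region $\{d_W\le\de_E\}$ is itself well-behaved (a standard bootstrap: choose $\de_E$ small enough that $E(\uu)-J(W)+k^2\la_1^2\le\de_E^2$ forces $\|\uv\|_E\le 2\de_E$, say, which is possible by the $\simeq$). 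The other, more technical, nuisance is verifying continuity of $d_W$ \emph{across} the boundary of the region where the decomposition \eqref{orth decop} is available; this is handled by the implicit function theorem giving smooth dependence of $(\s,c,\uv)$ on $\uu$ interior to that region, combined with the cutoff construction in Step 3 which makes the transition to the constant regime smooth by fiat.
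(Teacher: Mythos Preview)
Your approach is essentially the same as the paper's: define $d_W^2:=E(\uu)-J(W)+k^2\la_1^2$ (the paper calls this $d_1^2$) in the region where the orthogonal decomposition is available, verify $d_1^2\simeq\|\uv\|_E^2$ there via $\al=\mu=0$ and $C(\uv)=o(\|\uv\|_E^2)$, and glue to a global function via a cutoff in the crude distance $d_0:=C\dist_\HH(\uu,\pm\Sm_0)$. Your Step~2 for the $\la_1$-dominance is exactly the paper's computation.

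The one point where you diverge is the globalization: the paper glues $d_1$ to $d_0$ itself (not to a constant), i.e.\ $d_W=\chi(2d_0/\de_A)d_1+(1-\chi(2d_0/\de_A))d_0$, which yields the \emph{global} equivalence \eqref{energy dist} literally. Your version, gluing to a constant $\de_*$, only gives $d_W\simeq\min(\delta_\HH,1)$, as you correctly note; this suffices for every application downstream but does not match the statement \eqref{energy dist} as written. The paper also makes explicit the bootstrap you sketch in your ``main obstacle'': it chooses $\de_E<\de_A$ so that $d_0\le\de_A$ and $d_1\le\de_E$ together force $d_0\le\de_A/2$, and then checks that $d_W\le\de_E$ implies $d_0<\de_A$ (else $d_W=d_0\ge\de_A>\de_E$), hence $d_1\le d_W\le\de_E$, hence $d_0\le\de_A/2$ and the cutoff is identically $1$, so $d_W=d_1$ exactly on $\{d_W\le\de_E\}$. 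This is precisely the mechanism that makes \eqref{dWu small} an identity rather than an equivalence, and your discussion of the ``main obstacle'' has the right idea but would benefit from this clean formulation.
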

\begin{proof}
There are $0<\de_A\ll 1$ and $C \ge 1$ such that putting $d_0(\uu):=C\dist_\HH(\pm\Sm_0,\uu)$, the above arguments starting from \eqref{S0 close} work in the region $d_0(\uu)\le\de_A$, and
\EQ{
 d_0(\uu)^2\simeq E(\uu)-J(W)+k^2\la_1^2=:d_1^2(\uu) \le d_0^2(\uu).}
Hence there exists $\de_E\in(0,\de_A)$ such that
\EQ{
 d_0(\uu)\le\de_A \text{ and } d_1(\uu)\le\de_E \implies d_0(\uu)\le \de_A/2.}
Now we choose a smooth cutoff function $\chi\in C^\I(\R)$ satisfying $\chi(r)=1$ for $|r|\le 1$ and $\chi(r)=0$ for $|r|\ge 2$ and set
\EQ{
 d_W( \uu) := \chi(2d_0(\uu)/\de_A)d_1(\uu) + (1-\chi)(2d_0(\uu)/\de_A)d_0(\uu). }
If $d_0(\uu)\ge\de_A$ then $d_W(\uu)=d_0(\uu)\ge\de_A>\de_E$. Hence if $d_W(\uu)\le\de_E$ then $d_0(\uu)<\de_A$ and $d_1(\uu)\le d_W(\uu)\le\de_E$, so $d_0(\uu)\le\de_A/2$, $d_W(\uu)=d_1(\uu)$.
The stated properties now follow easily from the considerations in the previous section.
\end{proof}

The following lemma is the analogue of the ``ejection lemma'' in our previous papers, see~\cite{NakS}.
As usual, we shall need the Payne-Sattinger functional
\EQ{
K(u) =  \int_{\R^{d}} \big[  |\nabla u|^{2} -  |u|^{2^{*}} \big]\, dx
}
in our analysis of the global dynamics, which is why it appears below.

\begin{lem}\label{lem: ejection}
There exists $\de_H\in(0,\de_E)$ with the following properties:
Let $u$ be a solution on an open interval $I$ such that for some $t_0\in I$
\EQ{ \label{away from S}
 \de_0:=d_W(\vec u(t_0))\le \de_H, \pq E(\vec u)-J(W) \le \de_0^2/2,}
and
\EQ{ \label{outgoing}
  \p_t d_W(\vec u(t_0))\ge 0.}
Apply the decomposition from Lemma~\ref{lem:lambda dom}.
Then for $t>t_0$ in $I$ and as long as $d_W(\vec u(t))\le\de_H$, $d_W(\vec u(t))$ is increasing, and
\EQ{ \label{eject est}
 \pt d_W(\vec u(t)) \simeq -\sg\lambda_+(t)\simeq -\sg\lam_1(t) \simeq e^{k\tau }\de_0,}
where $\ta(t)$ is the solution of the ODE
$\ta'(t)=e^{\s(t)}$ with $\ta(t_0)=0$.
Moreover,
\EQ{
 \pt |\s(t)-\s(t_0)|\lec d_W(\vec u(t)),
 \pr \sg K(u(t)) \gec   d_W(\vec u(t)) - C_*\, d_W(\vec u(t_0))  ,
 \pr  |\lam_-(t)|+\| \uga(t)\|_{\HH} \lec \de_0 + d_W^2(\vec u(t)),}
for some absolute constant $C_*>0$ and $\sg=\pm 1$ is fixed on the time interval.
\end{lem}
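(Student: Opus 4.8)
The plan is to run the standard bootstrap/continuity argument for the hyperbolic ODE system \eqref{hyp driver}, treating the forcing terms as perturbations that are quadratically small in the distance. First I would set up the continuity framework: let $I_* = [t_0, t_1)$ be the maximal subinterval on which $d_W(\vec u(t)) \le \de_H$, so that on $I_*$ the decomposition of Lemma \ref{lem:lambda dom} applies, and the relation \eqref{dWu small} gives $d_W^2(\vec u(t)) = E(\vec u) - J(W) + k^2\la_1^2$. Combined with the energy hypothesis $E(\vec u) - J(W) \le \de_0^2/2$ and $d_W(\vec u(t_0)) = \de_0$, this already forces, as soon as $d_W$ grows past roughly $\de_0$, that $d_W^2 \simeq k^2\la_1^2$, i.e. the distance is dominated by the unstable/stable coordinate $\la_1$; this is exactly the content of \eqref{outer region}. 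The sign $\sg$ is locally constant (it is a discrete label), so it is fixed on $I_*$. The outgoing hypothesis $\p_t d_W(\vec u(t_0)) \ge 0$ together with $d^2_W = E-J(W)+k^2\la_1^2$ and the simplified first equation $\p_\ta \la_1 = \la_2 + \s_\ta \la_1$ should be used to pin down the sign: it gives $\sg\la_1(t_0) \le 0$ and $\p_\ta(\sg\la_1) \ge 0$ initially, so $-\sg\la_1 \ge 0$ and grows.

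The core estimate is the differential inequality for $\la_1$. From \eqref{hyp driver}, under the orthogonality conditions, $\p_\ta\la_1 = \la_2 + \s_\ta\la_1$ and $\p_\ta\la_2 = k^2\la_1 + (\text{error})$, where the error terms are $c_\ta e^\s\LR{v_2|\na\rho}$, $\s_\ta\LR{v_2|\La_0\rho}$, and $\LR{N(v_1)|\rho}$. By the modulation bound \eqref{modul est}, $|\s_\ta| + |c_\ta|e^\s \lec \|\uga\|_\HH$, and by the superlinear estimate (the analogue of \eqref{C est}) $|\LR{N(v_1)|\rho}| \lec \|v_1\|_{\dot H^1}^{\min(2,2^*-1)} \lec d_W^2$; also $\|\uga\|_\HH \cdot (\|v_2\|_{\dot H^{-1}}\text{-type factor}) \lec d_W^2$. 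Hence the whole forcing in both equations is $O(d_W^2) = O(\de_H \cdot d_W)$. Now I would introduce the hyperbolic-coordinate combination $\la_\pm = \sqrt{k/2}(\la_1 \pm k^{-1}\la_2)$, for which $\p_\ta\la_\pm = \pm k\la_\pm + O(d_W^2)$, and work with the Lyapunov-type quantity, e.g. $|\la_+|^2 - |\la_-|^2$ or directly $d_W$ itself. Using \eqref{dWu small}, differentiate $d_W^2$ in $\ta$: $\p_\ta d_W^2 = 2k^2\la_1 \p_\ta\la_1 = 2k^2\la_1(\la_2 + \s_\ta\la_1)$. Writing $\la_2 = k^2\la_1 \cdot(\text{sign consistency})$... more precisely, use that $-\sg\la_+ \simeq -\sg\la_1 \simeq d_W/k$ and $|\la_-| \lec \de_0 + d_W^2$ (the stable-mode bound, proved simultaneously in the bootstrap from $\p_\ta\la_- = -k\la_- + O(d_W^2)$ integrated against the decaying exponential starting from $|\la_-(t_0)| \lec \de_0$). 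This yields $\p_\ta d_W \ge (k - C\de_H) d_W \ge \frac{k}{2} d_W$ on $I_*$ once $\de_H$ is small, hence $d_W$ is increasing and $d_W(\vec u(t)) \simeq e^{k\ta}\de_0$; the matching lower/upper bounds $-\sg\la_+ \simeq -\sg\la_1 \simeq e^{k\ta}\de_0$ follow from the same estimates. One must check that this does not contradict $d_W \le \de_H$ until $\ta$ is of order $\frac{1}{k}\log(\de_H/\de_0)$, which is automatic.

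Finally the three auxiliary conclusions. The scaling bound $|\s(t) - \s(t_0)| \lec d_W(\vec u(t))$ follows by integrating $|\s_\ta| \lec \|\uga\|_\HH \lec \de_0 + d_W^2 \lec d_W$ (using $\|\uga\|_\HH \lec \de_0 + d_W^2$, itself part of the bootstrap, plus $\de_0 \lec d_W$ after the initial instant) in $\ta$ against $dt = e^{-\s}d\ta$, and summing the geometric series $\sum e^{-k\ta}$ weighted by the current value $d_W$; the point is that the exponential growth makes $\int_{t_0}^t d_W\,dt' \lec d_W(\vec u(t))$. For the virial term, expand $K(u(t))$ around $W$: $K(\fy) = -2\LR{L_+ v_1|v_1}_{-} + \dots$; more usefully, $\sg K(u) = -2k^2\la_1\sg \cdot(\text{lead}) + O(\text{quadratic in }\uga) + O(d_W^3)$. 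The linear-in-$\la_1$ term is $\simeq -\sg\la_1 \simeq d_W$ by the dominance, the $\uga$-quadratic part is controlled by $\LR{\LL\uga|\uga} + \al^2 \lec d_W(\vec u(t_0))^2 + d_W^4 \lec \de_0 \cdot d_W(\vec u(t_0))$ ... giving $\sg K(u(t)) \gec d_W(\vec u(t)) - C_* d_W(\vec u(t_0))$; here one uses that at $t_0$ everything is $\lec \de_0$, and the error cannot grow faster than $d_W$. The stable-mode and $\uga$ bound $|\la_-(t)| + \|\uga(t)\|_\HH \lec \de_0 + d_W^2(\vec u(t))$ is the bootstrap hypothesis that closes: $\|\uga\|_\HH$ evolves by $\p_\ta\uga = J\LL\uga + (\text{error}) $, and since $J\LL$ restricted to the $\uga$-subspace has imaginary spectrum (it generates a bounded group on the $\|\cdot\|_E$-subspace by the positivity in Lemma \ref{lem:L+ spectral}), Duhamel against the $O(d_W^2)$ forcing plus the exponentially-growing weight gives $\|\uga(t)\|_\HH \lec \de_0 + \int_{t_0}^t d_W^2 \,d\ta' \lec \de_0 + d_W^2(\vec u(t))$, and similarly for $\la_-$.

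The main obstacle, as usual in ejection lemmas, is the simultaneous bootstrap: the growth rate of $d_W$ (hence of the good exponential weight $e^{k\ta}$) is established using the bound $|\la_-| + \|\uga\|_\HH \lec \de_0 + d_W^2$, while that bound in turn is proved by integrating the forcing against $e^{k\ta}$. One must set up the continuity argument so that both are improved strictly on $I_*$, choosing $\de_H$ small enough (depending only on $k$, the dimension, and the absolute constants in \eqref{modul est}, \eqref{C est}, Lemma \ref{lem:L+ spectral}) that the constant $C\de_H$ appearing in $\p_\ta d_W \ge (k - C\de_H)d_W$ is beaten, and that $d_W^2 \le \de_H d_W$ is genuinely lower order. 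The delicate point specific to the critical problem — the scaling $\s$ is not a priori bounded — is handled here by the fact that we only need $|\s(t) - \s(t_0)| \lec d_W \le \de_H$ on the ejection time scale, which the modulation bound \eqref{modul est} supplies directly; the simplification from the change of time variable (the linearized operator $J\LL$ is now $\ta$-independent) is what makes the Duhamel estimate for $\uga$ clean.
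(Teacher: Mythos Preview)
Your overall plan is sound and captures the right architecture (continuity argument, $|\la_1|\simeq d_W$ from \eqref{dWu small}, hyperbolic ODEs for $\la_\pm$, modulation bounds), but it diverges from the paper's proof in two places, and one of them hides a genuine issue.

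\textbf{Monotonicity via convexity vs.\ your first-order inequality.} The paper does not try to get $\p_\ta d_W \ge (k-C\de_H)d_W$ directly. Instead it differentiates twice:
\[
 \p_\ta^2 d_W^2(\vec u) = 2k^2(k^2\la_1^2+\la_2^2)+O(\la_1^3),
\]
which is positive by \eqref{rel}, so $d_W^2$ is convex in $\ta$ and increasing from the outgoing hypothesis. This also gives $\la_1(\la_2+\s_\ta\la_1)\ge 0$ immediately, hence $|\la_+|\gec|\la_-|$ and $|\la_1|\simeq|\la_+|$, without having to bootstrap the sign/size of $\la_2$. Your first-order approach can be made to work, but the convexity argument is cleaner and avoids the circularity you flag at the end.

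\textbf{The $\uga$ estimate.} Here there is a real gap in your plan. You propose to run Duhamel using that ``$J\LL$ restricted to the $\uga$-subspace has imaginary spectrum and generates a bounded group''. But the $\uga$-subspace (defined by $\ga_1\perp\rho,\La_0\rho,\na\rho$ and $\ga_2\perp\rho$) is \emph{not} invariant under $J\LL$: for instance $J\na\vec W=(0,-\na W)$ lies in it, while $J\LL(J\na\vec W)=-\na\vec W=(-\na W,0)$ does not (since $\LR{\na W|\na\rho}\ne 0$). So there is no linear group on this subspace to which to apply Duhamel. The paper instead uses an \emph{energy} argument: set $\uv_d:=\la_+\ug^++\la_-\ug^-$, compute
\[
 E(\vec u)-E(\vec W+\uv_d)=\tfrac12\LR{\LL\uga|\uga}+C(\uv_d)-C(\uv),
\]
and show $\p_\ta E(\vec W+\uv_d)=O(d_W^2\|\uga\|_\HH)$ using the $\la_\pm$ equations and \eqref{modul est}. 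Integrating and using $|E(\vec u)-E(\vec W+\uv_d(t_0))|\lec\de_0^2$ yields $\LR{\LL\uga|\uga}\lec\de_0^2+\|\uga\|_{L^\I\HH}\,d_W^2$, which closes. An equivalent way is to differentiate $\LR{\LL\uga|\uga}$ directly and observe that the $\LR{L_+\ga_1|\ga_2}$ terms cancel; either way the point is that one works with the quadratic form, not with a linear propagator. Your Duhamel idea could in principle be repaired by projecting the equation and showing the extra projection terms are also $O(d_W^2)$, but this is more work and is not what the paper does.
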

\begin{proof}
By Lemma~\ref{lem:lambda dom} and \eqref{away from S}, we conclude that $|\lambda_1(t_0)|\simeq \de_0$.
Furthermore, as long as $d_W(\vec u(t))$ remains sufficiently small and one has $d_W(\vec u(t))\ge \de_0$, the relation
\EQ{\label{rel}
|\lam_1(t)|\simeq d_W(\vec u(t))} is preserved.
In particular, if $d_W(\vec u(t))$ is increasing this relation is preserved. We shall therefore {\em assume} \eqref{rel} in our argument that establishes the monotonicity and~\eqref{eject est}.
The logic here is that once we have shown these properties to be correct, then the validity of~\eqref{rel} follows a posteriori by the method of continuity.

Differentiating \eqref{dWu small} using \eqref{hyp driver} and \eqref{modulat eq} as well as \eqref{orth decop} yields,
\EQ{\label{d d2}
 \pt \p_\tau d^2_W(\vec u) = 2k^2\lam_1\p_\ta\lam_1 = 2k^2\la_1(\la_2+\s_\ta\la_1)}
and
\EQ{\label{convex}
 \p_\tau^2 d^2_W(\vec u) = 2k^2(k^2\lam_1^2+\lam_2^2) + O(\lam_1^3).
}
In conjunction with the previous lemma we conclude from \eqref{convex} that $d_W^2(u)$ is increasing and convex in $\ta$,
as long as it remains sufficiently small.
Next, we remark that $\lam_1$ and $\p_\ta \lam_1=\lam_2+\s_\ta\la_1$ have the same sign, since
\EQ{
 2k^2\la_1(\la_2+\s_\ta\la_1)=\p_\ta d_W^2(\vec u) \ge \p_\ta d_W^2(\vec u)|_{t=t_0} = 2d_W(\vec u(t_0))\p_\ta d_W(\vec u(t_0))\ge 0.}
This implies that $|\lam_+|\gec|\lam_-|$ and thus $|\lam_1|\simeq |\lam_+|$.

The evolution of $\lam_+$ in $\ta$ is determined by~\eqref{la+ la-}, which states that
\EQ{
 \p_\ta\lam_\pm = \pm k\lam_+ + O(d_W^2(\vec u) ). }
Since $d_W(\vec u)\lec |\lam_+|$, we see that $|\lam_+|\simeq e^{k\ta}\de_0$, and
\EQ{
 |\lam_-|\lec \de_0+\lam^2_+}
as claimed. As for the scaling parameters, \eqref{modulat eq} yields $|\p_\ta\s|\lec\|\ga\|_\HH\lec d_W(\vec u)$, and hence integrating the exponential bound in $\ta$ implies $|\s-\s(t_0)|\lec d_W(\vec u)$.

The $\uga$-part is estimated as in Lemma~4.3 of~\cite{NakS}. To this end define
\EQ{
 \uv_{d}:=\lam_{+}\ug^{+}+\lam_{-}\ug^{-}=\uv-\uga.}
Then
\EQ{
E(\vec W + \uv) &= J(W) + \frac12(\lam_{2}^{2}- k^2 \lam_{1}^{2}) + \frac12 \LR{\cL \uga|\uga}- C(\uv) \\
E(\vec W + \uv_{d}) &= J(W) + \frac12(\lam_{2}^{2}- k^2 \lam_{1}^{2}) - C(\uv_{d})
}
whence
\EQ{
E(\vec u) - E(\vec W+\uv_{d}) &= \frac12\LR{ \cL\uga|\uga} + C(\uv_{d})- C(\uv)
}
as well as (where $\uv_{d,1}$ denotes the first component of $\uv_{d}$)
\EQ{
 \p_\ta E(\vec W+ \uv_{d}) &=\lam_{2}\p_\ta \lam_{2} - k^2\lam_{1}\p_\ta\lam_{1} - \LR{ N(\uv_{d,1}) | \partial_{\ta} \uv_{d,1}}  \\
&=     \lam_{2}\big(\LR{N(v_{1})|\rho} + \s_\ta \LR{v_{2}|\La_{0}\rho} - c_\ta e^{\s} \LR{v_{2}|\nabla \rho}    -  \LR{ N(\uv_{d,1}) |  \rho} \big ) -k^2\s_\ta\la_1^2
 \pr= O\big(  d_W^2(\vec u) \, \|\uga \|_{\HH} \big)
}
where we used \eqref{modul est} to bound $\s_\ta$ and $c_\ta e^{\s}$.
In view of the preceding,
\EQ{\nn
|E(\vec u)- E(\vec W + \uv_{d} ) | &\lec |E(\uu(t_0)) - E(\vec W + \uv_{d} (t_0)) | + \\
&\qquad + |E(\vec W + \uv_{d} ) - E(\vec W + \uv_{d} (t_0)) |  \\
&\lec \de_0^2 + \|\uga\|_{L^\I([t_0,t], \HH)} \, d_W^2(\vec u)
}
where we used the exponential growth of $d_W(\vec u)$ to pass to the last line.  Furthermore,
\EQ{
\frac12 \LR{\cL\uga|\uga} &\le  |E(\vec u)- E(\vec W + \uv_{d} ) | + | C(\uv_{d})- C(\uv)  | \\
&\lec \de_0^2 +  \|\uga\|_{L^\I([t_0,t], \HH)} \, d_W^2(\vec u)
}
In conclusion,
\EQ{
\| \uga\|_{\HH} \lec \de_0 +  d_W^2(\vec u)
}
as claimed.

Finally, expanding the $K$-functional, one checks that
\EQ{\label{K W v}
 K(W+v_1) = -(2^*-2)\LR{W^{2^*-1}|v_1} + O(\|v_1\|_{\dot H^1}^2)
}
Inserting the expansion $v_1= \la_1 \rho + \ga_1$  into \eqref{K W v} and
using the bounds on $\la_1(t)$ and $\uga(t)$ that we just established implies
the desired properties of $K$.
\end{proof}

We remark that unlike the subcritical nonradial paper~\cite{NakS2} the distance function is convex near a minimum and thus increasing in Lemma~\ref{lem: ejection}.
The difference lies with the choice of orthogonality conditions corresponding to the translational symmetry, which in our case insure that $\p_\ta\lam_1=\lam_2+\s_\ta\la_1$.
This is similar to the behavior in the radial subcritical case, see~\cite{NakS}.

 \section{The variational structure in the energy critical setting}

We recall the following characterization of the ground state:
 \EQ{
 J(W)&=\inf\{ J(\fy)\mid K(\fy)=0, \fy\in \dot H^1(\R^d),\fy\ne0\} \\
 &= \inf\{ \frac{1}{d} \|\nabla \fy\|_2^2 \mid K(\fy)\le0, \fy\in \dot H^1(\R^d),\fy\ne0\}
 }
where $J(\fy)$ is {\em the static energy} defined in \eqref{def J},
and $\pm W$ are the unique minimizer up to the dilation (as in $W_\s$) and translation symmetries. In other words, $W$ is the unique (up to the same symmetries) extremizer of the Sobolev embedding $\dot H^1(\R^d)\embed L^{2^*}(\R^d)$.
 We need the following variational structure outside of the soliton tube.

\begin{lem}\label{lem: variational} For each $0<\delta<1$ there is
 $\eps_1 = \eps_1(\de)>0$   such that if $\uu\in\HH$ satisfies $J(u_1)<J(W)+\eps_1^2(\de)$ and ${d}_W(\uu)>\delta$, then we have either
\EQ{
 K(u_1) >\min\{\kappa(\de),\,c\|\nabla u_1\|_{L^2}^2\}}
 or else
\EQ{
 K(u_1) <-\kappa(\de)}
 for suitable $\kappa(\de)>0$ and some absolute constant $c>0$.
 \end{lem}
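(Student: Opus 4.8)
The plan is to argue by contradiction along a minimizing/limiting sequence, exactly in the spirit of the concentration-compactness characterization of $W$ recalled just above the lemma. Suppose the statement fails for some fixed $\delta\in(0,1)$. Then there is a sequence $\uu_n=(u_{1,n},u_{2,n})\in\HH$ with $J(u_{1,n})<J(W)+1/n$, $d_W(\uu_n)>\delta$, and yet $-\kappa\le K(u_{1,n})\le\min\{\kappa,c\|\nabla u_{1,n}\|_2^2\}$ for \emph{every} $\kappa>0$ and every $c>0$; letting $\kappa=\kappa_n\to0$ and $c=c_n\to0$ we extract a sequence with $K(u_{1,n})\to0$ and $K(u_{1,n})=o(\|\nabla u_{1,n}\|_2^2)$ (the latter being automatic once $\|\nabla u_{1,n}\|_2$ stays bounded away from $0$, which it does since $W$ is a nonzero extremizer). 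First I would record that $\|\nabla u_{1,n}\|_2$ is bounded: from the identity $\|\nabla\fy\|_2^2/d=J(\fy)-K(\fy)/2^*$ applied to $u_{1,n}$ we get $\|\nabla u_{1,n}\|_2^2/d = J(u_{1,n}) - K(u_{1,n})/2^* \le J(W)+1/n + |K(u_{1,n})|/2^*$, and since $K(u_{1,n})\to0$ this forces $\limsup_n\|\nabla u_{1,n}\|_2^2\le d\,J(W)$; similarly it is bounded below away from $0$ because $K(u_{1,n})\to 0$ together with Sobolev gives $\|\nabla u_{1,n}\|_2^2 \le \|u_{1,n}\|_{2^*}^{2^*}+o(1) \le C_S^{2^*}\|\nabla u_{1,n}\|_2^{2^*}+o(1)$, so $\|\nabla u_{1,n}\|_2$ cannot go to $0$.

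Next I would invoke the variational characterization $J(W)=\inf\{\tfrac1d\|\nabla\fy\|_2^2 : K(\fy)\le0,\ \fy\ne0\}$ together with $J(W)=\inf\{J(\fy):K(\fy)=0,\ \fy\ne0\}$ to conclude that $(u_{1,n})$ is a minimizing sequence for the Sobolev quotient in the sense that $K(u_{1,n})\to0$ and $J(u_{1,n})\to J(W)$ (the upper bound is the hypothesis; the lower bound: if $K(u_{1,n})\ge0$ then $J(u_{1,n})=\tfrac1d\|\nabla u_{1,n}\|_2^2+K(u_{1,n})/2^*\ge \tfrac1d\|\nabla u_{1,n}\|_2^2 \ge \inf$ up to an $o(1)$ correction absorbing the case $K(u_{1,n})<0$ via the second characterization). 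Hence $(u_{1,n})$ is, after the usual rescaling and translation normalization $u_{1,n}\mapsto S_{-1}^{\s_n}T^{-c_n}u_{1,n}$, a minimizing sequence for the sharp Sobolev inequality that does not vanish and does not split; by the compactness of extremizing sequences for Sobolev (equivalently, the profile decomposition / concentration compactness argument that underlies the cited characterization) a subsequence converges strongly in $\dot H^1$ to $\pm W$ (some dilate/translate of it). Undoing the normalization, $\dist_{\HH}(\uu_n,\pm\Sm_0)\to0$ in the first coordinate; for the second coordinate I would note $d_W(\uu)$ as defined in Lemma~\ref{lem:lambda dom} is equivalent to $\inf_{\pm,\s,c}\|\uu\mp\vec W_\s(\cdot-c)\|_\HH$, and the convergence of the first component to a translate of $\pm W$ together with the (trivial, since $\vec W_\s$ has zero second component) control of $u_{2,n}$—wait: there is no control on $u_{2,n}$ from the hypotheses. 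So I must be more careful: $d_W$ only involves $u_1$ essentially through $J$, $K$ and the distance to $\pm\Sm_0$, which has zero second component; but $d_W(\uu)>\delta$ could be caused entirely by a large $u_{2,n}$. Therefore the correct reduction is: the hypothesis controls only $u_1$, so $K$, $J$ are functionals of $u_1$ alone, and the conclusion is about $K(u_1)$; the quantity $d_W(\uu)$ enters only to guarantee $u_{1,n}$ is bounded away from the soliton scales—but if $u_{2,n}$ is huge then $d_W(\uu_n)>\delta$ automatically and says nothing about $u_{1,n}$. Hence one cannot conclude. The resolution is that the lemma as used later always comes paired with $E(\uu)<J(W)+\e^2$, which bounds $\|u_{2,n}\|_2$; I would therefore either add that hypothesis or, better, observe that in the regime where $d_W(\uu)>\delta$ is caused by the first component being far from $\pm\Sm_0$ the above argument applies, whereas if it is caused by $\|u_2\|_2$ being large then $\|\nabla u_1\|_2$ is still bounded (as shown) and far-from-extremal in Sobolev, giving $K(u_1)\ge$ a positive constant by the same compactness argument run in the contrapositive on $u_1$ alone. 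Concretely: \emph{the relevant dichotomy is purely about $u_1$}—either $u_1$ is within $O(\delta')$ of $\pm\Sm_0$ in $\dot H^1$ (handled by the ejection/coercivity of the previous section, not this lemma) or it is $\delta'$-far, and I would re-read $d_W(\uu)>\delta$ as implying, for the purposes of this lemma, that $\dist_{\dot H^1}(u_1,\pm W\text{-orbit})\gtrsim\delta$, since otherwise $d_W$ would be small.

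So the clean skeleton is: (i) reduce to a sequence $u_{1,n}$ with $J(u_{1,n})\to J(W)$, $\dist_{\dot H^1}(u_{1,n},\pm\Sm_0\text{-orbit})\gtrsim\delta$, and—toward a contradiction—$K(u_{1,n})\to0$ with $K(u_{1,n})=o(\|\nabla u_{1,n}\|_2^2)$; (ii) show $\|\nabla u_{1,n}\|_2$ is bounded above and below via the identity $\|\nabla\fy\|_2^2/d=J(\fy)-K(\fy)/2^*$ and Sobolev; (iii) invoke the compactness of minimizing sequences for the sharp Sobolev inequality (equivalently the variational characterization of $W$ recalled above) to get, along a subsequence and after the dilation–translation normalization, strong $\dot H^1$ convergence of $S_{-1}^{\s_n}T^{-c_n}u_{1,n}$ to $\pm W$; (iv) conclude $\dist_{\dot H^1}(u_{1,n},\pm\Sm_0\text{-orbit})\to0$, contradicting (i). The quantitative $\kappa(\de)$ and the $c\|\nabla u_1\|_2^2$ alternative then fall out by the standard argument: $\kappa(\de)$ is just the infimum of $|K(u_1)|$ over the (nonempty, closed under the failure hypothesis) set of competitors at distance $\ge\delta$ with $J<J(W)+\eps_1^2$, which is positive by the compactness, and the $c\|\nabla u_1\|_2^2$ form handles the high-frequency/large-gradient regime where $K(u_1)>0$ is quantitatively comparable to $\|\nabla u_1\|_2^2$ because the Sobolev deficit is bounded below.

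\medskip

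The main obstacle is step (iii): making the concentration-compactness argument genuinely yield strong convergence to $\pm W$ rather than merely to a nonzero weak limit. This requires the profile decomposition for $\dot H^1\hookrightarrow L^{2^*}$ and the fact—this is exactly the content of the cited characterization of $W$ via $CGS$ and Kenig–Merle—that a minimizing sequence for the sharp Sobolev inequality is, after scaling and translation, relatively compact, so no splitting into multiple bubbles and no vanishing can occur when $J(u_{1,n})\to J(W)$ and $K(u_{1,n})\to0$. I would lean on the already-recalled double characterization of $J(W)$ to shortcut most of this: the second formula $J(W)=\inf\{\tfrac1d\|\nabla\fy\|_2^2:K(\fy)\le0\}$ together with uniqueness of the minimizer up to symmetries is precisely the statement that extremizing sequences converge to the orbit of $\pm W$, so step (iii) is a black-boxable consequence. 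A secondary, more bookkeeping obstacle is the interaction with how $d_W$ is defined in Lemma~\ref{lem:lambda dom}: I would make explicit at the outset that, for this lemma, $d_W(\uu)>\delta$ may be weakened/replaced by the $\dot H^1$-distance of $u_1$ to the $W$-orbit being $\gtrsim\delta$ (the two are comparable precisely in the small-$d_W$ regime by \eqref{energy dist}, and in the large regime we are in the "far" case anyway), so that the whole argument is about $u_1$ alone and no auxiliary energy bound on $u_2$ is needed.
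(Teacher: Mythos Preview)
Your proposal is correct and follows essentially the same route as the paper's proof. Both arguments (i) dispose of the small-gradient regime via Sobolev to produce the $c\|\nabla u_1\|_2^2$ alternative, (ii) reduce the hypothesis $d_W(\uu)>\delta$ to the purely first-component statement $\dist_{\dot H^1}(u_1,\pm\Sm_0)\gtrsim\delta$, (iii) assume for contradiction a sequence with $K(u_{1,n})\to 0$ and $J(u_{1,n})\to J(W)$, and (iv) invoke Lions' concentration--compactness for extremizing sequences of the Sobolev inequality to force strong convergence to the $W$-orbit, contradicting (ii).

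Your extended worry about the $u_2$-component is exactly what the paper's first paragraph addresses: it splits into $\|u_2\|_2\ll\delta$ (where $d_W((u_1,0))>\delta/2$ follows directly) and $\|u_2\|_2\simeq\delta$ (where one argues $J(u_1)<J(W)-c\delta^2$, forcing $u_1$ away from the orbit anyway). You are right that this second step, as the paper writes it, tacitly uses an $E$-type bound rather than the literal $J$-hypothesis---and you are also right that this is harmless since in every application the lemma is invoked under $E(\uu)<J(W)+\eps^2$, which yields $J(u_1)=E(\uu)-\tfrac12\|u_2\|_2^2<J(W)-c\delta^2$ when $\|u_2\|_2\gtrsim\delta\gg\eps$. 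So your diagnosis and resolution match the paper's intent.
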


\begin{proof}
We first eliminate the $u_2$ component from $\uu$: if $\|u_2\|_2\ll \de$, then it follows that $d_W(u_1,0)>\de/2$. On the other hand,
if $\| u_2\|\simeq \de$, then assuming $\eps_1(\delta)\ll \de$ as we may, it follows that
\[
J(u_1)<J(W)-c\de^2
\]
with some absolute constant $c$. But then we must have $\| u_1 - W_\s(\cdot-c)\|_{\dot H^1}\gtrsim \de$ for all $\s,c$. Hence
\EQ{
 \de\lec d_W(u_1,0) \simeq \dist(u_1,\Sm_0)}
in all cases. In the rest of proof we regard $u=u_1\in\dot H^1(\R^d)$ with $\dist(u,\Sm_0)\gec\de$.

By the critical Sobolev imbedding, the statement holds provided $\|\nabla u\|_{2}< c_{0}$ where $c_{0}>0$ is some absolute constant.
Thus, assume the lemma fails and let $\{u_{n}\}_{n=1}^{\I}\subset \dot H^{1}$ be a sequence with
\EQ{\label{eq:seqdef}
\|\nabla u_{n}\|_{2}\to c\ge c_{0},\quad K(u_{n})\to0,\quad J(u_{n})< J(W)+\frac{1}{n}
}
as well as $\dist(u_n,\Sm_0)\gec \delta_0$.
Since
\EQ{\label{eq:Jun}
J(u_{n}) = \frac{1}{d} \| \nabla u_{n}\|_{2}^{2} + \frac{1}{2^{*}} K(u_{n})
}
we see that $\{u_{n}\}_{n=1}^{\I}$ is bounded in $\dot H^{1}\cap L^{2^{*}}$ and so $c<\I$.
Then the latter two conditions of \eqref{eq:seqdef} implies that $\{u_n\}$ is an extremizing sequence for the critical Sobolev embedding $\dot H^1(\R^d)\subset L^{2^*}(\R^d)$, and so, by the celebrated theorem of P.-L.~Lions \cite[Theorem I.1]{Lions}, it is compact in $\dot H^1$ up to scaling and translation, hence converging strongly to the unique minimizer $W$ up to scaling and translation.
But this clearly contradicts $\dist(u_n,\Sm_0)\gec \delta_0$.
\end{proof}

As in the previous works \cite{NakS}, we can define a sign functional by combining the ejection lemma with the  variational structure exhibited in the previous lemma.

\begin{cor} \label{cor:sign}
Let $\de_S:=\de_H/(2C_*)>0$ where $\de_H>0$ and $C_*\ge 1$ are the constants from~Lemma \ref{lem: ejection}. Let $0<\de\le\de_S$ and
\EQ{
 \HL_{(\de)}:=\{ \uu\in\HH \mid E( \uu)<J(Q)+\min(d_W^2( \uu)/2,\e_1^2(\de))\},}
where $\e_1(\de)$ is defined in Lemma~\ref{lem: variational}. Then there exists a unique continuous function $\Sg:\HL_{(\de)}\to\{\pm 1\}$ satisfying
\EQ{
 \CAS{  \uu\in \HL_{(\de)},\ d_W( \uu)\le\de_E &\implies \Sg( \uu)=-\sign\la_1,\\
  u\in\HL_{(\de)},\ d_W( \uu)\ge\de &\implies \Sg( \uu)=\sign K(u) ,}}
where we set $\sign 0=+1$.
\end{cor}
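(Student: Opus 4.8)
Since $\de<\de_E$, the two prescriptions together cover all of $\HL_{(\de)}$, so $\Sg$ is automatically unique once we check that each is well defined and continuous on its domain and that the two agree on the overlap $\{\de\le d_W(\uu)\le\de_E\}$. On $\HL_{(\de)}\cap\{d_W(\uu)\le\de_E\}$ the bound $E(\uu)<J(W)+d_W^2(\uu)/2$ puts us in the outer region~\eqref{outer region} of Lemma~\ref{lem:lambda dom}, so the modulation decomposition \eqref{orth decop}--\eqref{uv decomp} exists, $d_W(\uu)\simeq|\la_1|$, and in particular $\la_1\ne0$; since the decomposition is continuous in $\uu$ and $\sg$ is locally constant, $\uu\mapsto-\sign\la_1$ is continuous there. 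On $\HL_{(\de)}\cap\{d_W(\uu)\ge\de\}$ one has $J(u_1)=E(\uu)-\frac12\|u_2\|_2^2\le E(\uu)<J(W)+\e_1^2(\de)$, so Lemma~\ref{lem: variational} forces $|K(u_1)|\ge\kappa(\de)>0$, whence $\uu\mapsto\sign K(u_1)$ is continuous there.

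The heart of the matter is consistency on the overlap, which we first settle when $\de\le\de_0:=d_W(\uu)\le\de_S$. Let $u$ be the solution with datum $\uu$ at a time $t_0$; after replacing $u$ by its time reflection $u(2t_0-\cdot)$ if necessary --- which only changes $u_2$, and hence leaves $d_W$, $\la_1$ and $K(u_1)$ at $t_0$ unchanged --- we may assume $\p_t d_W(\vec u(t_0))\ge0$, and since $E(\uu)-J(W)\le\de_0^2/2$, Lemma~\ref{lem: ejection} applies. As in the ejection analysis of~\cite{NakS}, $d_W(\vec u(t))$ is strictly increasing while $\le\de_H$ and, being bounded there ($\|\vec u(t)\|_\HH\le\|\vec W\|_\HH+\de_H$) while growing exponentially in $\ta$, it reaches $\de_H$ at a first time $t_1>t_0$. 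Along $[t_0,t_1]$ we have $d_W(\vec u(t))\ge\de_0\ge\de$ and, $E$ being conserved, $J(u_1(t))\le E(\uu)<J(W)+\e_1^2(\de)$; so Lemma~\ref{lem: variational} keeps $|K(u_1(t))|\ge\kappa(\de)$ and $\sign K(u_1(t))$ constant on $[t_0,t_1]$. At $t_1$, Lemma~\ref{lem: ejection} gives $\sg K(u(t_1))\gec d_W(\vec u(t_1))-C_*d_W(\vec u(t_0))\ge\de_H-C_*\de_S=\de_H/2>0$, so $\sign K(u_1(\uu))=\sign K(u_1(t_1))=\sg$; and \eqref{eject est} shows that $\la_1$ has constant sign $-\sg$ on $[t_0,t_1]$, so $-\sign\la_1(\uu)=\sg$ as well. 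Hence the two prescriptions agree at $\uu$.

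It remains to reach $\uu$ in the overlap with $\de_S<d_W(\uu)\le\de_E$. Both $-\sign\la_1$ and $\sign K(u_1)$ are continuous on the set $\{d_W(\uu)\le\de_E,\ \la_1\ne0,\ K(u_1)\ne0\}$, which contains $\HL_{(\de)}\cap\{\de\le d_W\le\de_E\}$; so it suffices to join $\uu$, inside that set, to a point with $d_W\le\de_S$ (already handled). The candidate is the contraction $\uu_\te:=T^c\vec S^\s(\sg\vec W+\te\uv)$, $\te\in[\te_*,1]$, toward the soliton manifold: here $d_W(\uu_\te)\simeq\te\,d_W(\uu)$ decreases monotonically, crossing $\de_S$ at a value $\te_*\gec\de_S/\de_E$, while $\la_1(\uu_\te)=\te\la_1\ne0$, and $K(u_{1,\te})=K(\sg W+\te v_1)$ is governed by the expansion~\eqref{K W v} together with Lemma~\ref{lem: variational} on the portion still lying over $\{d_W\ge\de\}$. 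The step requiring genuine care --- and the main obstacle --- is to verify that this deformation stays in the region where both prescriptions are defined and continuous, i.e.\ that $K(u_{1,\te})$ does not vanish for $\te\in[\te_*,1]$; because of the scale invariance of~\eqref{main PDE} this is not immediate from~\eqref{K W v} alone when $\de$ is small, since then $|K(u_1)|$ can be comparable to $d_W(\uu)^2$. Granting it, the two prescriptions agree throughout the overlap, so by the pasting lemma for the closed sets $\HL_{(\de)}\cap\{d_W\le\de_E\}$ and $\HL_{(\de)}\cap\{d_W\ge\de\}$ they glue to a continuous $\Sg:\HL_{(\de)}\to\{\pm1\}$, which is then the unique function determined by the two defining conditions.
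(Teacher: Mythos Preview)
Your overall strategy --- verify each prescription is continuous on its domain, then use the ejection dynamics to check agreement on the overlap --- is exactly the one the paper intends (it simply cites~\cite{NakS0}), and your flow argument for $\de\le d_W(\uu)\le\de_S$ is the heart of the proof and is correct.

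The gap you flag in the final paragraph is real, however, and the linear contraction does not close it. The path $\uu_\te=T^c\vec S^\s(\sg\vec W+\te\uv)$ need not remain in $\HL_{(\de)}$: the energy satisfies $E(\uu_\te)-J(W)=\te^2\bigl(E(\uu)-J(W)\bigr)+\te^2C(\uv)-C(\te\uv)$, and the cubic correction $\te^2C(\uv)-C(\te\uv)=O(\de_E^3)$ is not controlled by $\e_1^2(\de)$ when $\de$ is small. Once outside $\HL_{(\de)}$, Lemma~\ref{lem: variational} no longer applies, and~\eqref{K W v} alone cannot pin down $\sign K(u_{1,\te})$ because the $\ga_1$-contribution $|\LR{W^{2^*-1}|\ga_1}|$ is generically of the same order as the main term $c_0|\la_1|$ --- the outer-region condition only yields $\|\ga_1\|_{\dot H^1}\lec|\la_1|$ with a constant that is \emph{not} small. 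So ``granting it'' really does skip a nontrivial step.

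The clean way around this, and the one actually used in~\cite{NakS0,NakS}, is to abandon the static path and stay with the flow throughout. Energy conservation keeps $E<J(W)+\e_1^2(\de)$ along the trajectory, so $\sign K$ is locked by Lemma~\ref{lem: variational} wherever $d_W\ge\de$; and $\la_1$ cannot vanish while the trajectory remains in $\HL_{(\de)}$, since $k^2\la_1^2=d_W^2-(E-J(W))>d_W^2/2$ there. Using the strict convexity~\eqref{convex} of $d_W^2$ (which relies only on the decomposition of Lemma~\ref{lem:lambda dom} and thus holds throughout $\{d_W\le\de_E\}\cap\HL_{(\de)}$), one runs the flow from $\uu$ backward to the unique minimum $t_m$ of $d_W^2$; from there your own ejection argument applies with the smaller starting value $\de_m:=d_W(\vec u(t_m))$, giving $\sg K>0$ once $d_W$ has grown beyond $C_*\de_m$. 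Propagating both signs back to $\uu$ along the flow then completes the overlap check without ever needing the ad hoc deformation.
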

\begin{proof}
The proof is the same as in the subcritical radial case, see~\cite{NakS0}.
\end{proof}

\section{   The one-pass theorem           }
 A key step in the proof of our main theorem is to show that the sign $\Sg( \uu(t))$ can change at most once for any solution of~\eqref{main PDE}.
This goes by the name of {\em one-pass theorem}, see~\cite{NakS}. The current section is entirely devoted to this theorem:
\begin{thm}\label{thm: onepass}
There exist
 $0<\e_*\ll \de_*\ll\de_H$ with the following properties:  Let $\vec u\in C(I;\HH)$ be a solution of \eqref{main PDE} on an open interval $I$, satisfying for some $\e\in(0,\e_*]$, $\de\in(\sqrt 2\,\e,\de_*]$ and $T_1<T_2\in I$
\EQ{
 E(\vec u)\le J(W)+\e^2, \pq d_W (\vec u(T_1))<\de=d_W (\vec u(T_2)).}
Then   $d_W (\vec u(t))>\de$ for all $t>T_2$ in $I$.
\end{thm}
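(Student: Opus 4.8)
The one-pass theorem is proved by a contradiction/variational argument built on the ejection lemma and a ``virial'' (Morawetz-type localized energy) estimate for the hyperbolic evolution. Suppose, for contradiction, that the solution re-enters the ball $\{d_W(\vec u(t))\le\de\}$ at some time $T_3>T_2$, so that $d_W(\vec u(t))>\de$ on the open interval $(T_2,T_3)$ while $d_W(\vec u(T_2))=d_W(\vec u(T_3))=\de$. The idea is that a solution which exits the soliton tube and comes back must have paid a definite amount of ``virial'', contradicting the smallness of the total energy budget $E(\vec u)-J(W)\le \e^2\ll\de^2$. The proof therefore splits $(T_2,T_3)$ into regions where the solution is close to the soliton tube (``hyperbolic'' regime, where the ejection lemma of Lemma~\ref{lem: ejection} applies and $d_W\simeq |\la_1|$, $\sg K(u)$ has a sign forced by $\Sg$) and regions where it is far (``variational'' regime, where Lemma~\ref{lem: variational} gives $|K(u)|\gec\kappa(\de)$ with a definite sign).

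First I would set up the modulated virial functional. Following the strategy of \cite{NakS}, one uses a localized virial identity adapted to the change of time $\ta$ and the modulation parameters: roughly $V(t)=\LR{\dot u\,|\,(x\cdot\na + \na\cdot x)\chi_R u}$ cut off at a spatial scale $R=R(t)$ tied to $e^{-\s(t)}$ (so that the cutoff moves and dilates with the soliton), and one computes $\dot V$ in the $\ta$ variable. The main term in $\dot V$ is a multiple of $-K(u)$ plus controlled errors; the error terms are of three types — (i) modulation errors involving $\s_\ta,c_\ta$, bounded via \eqref{modul est} and the ejection estimates; (ii) boundary/tail terms from the spatial cutoff, which are small because $W$ and its modulated translates decay and because the exterior energy is small; (iii) the contribution near the light cone. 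One shows $V$ is bounded, $|V(t)|\lec R(t)\,\|\vec u\|_{\HH}^{\text{something}}\lec $ a controlled quantity, while $\int_{T_2}^{T_3}(-\Sg K(u))\,d\ta$ is bounded below by a definite positive multiple of the ``time spent'' weighted appropriately. Integrating $\dot V$ over $(T_2,T_3)$ then yields that this virial integral is $O(\de^2)$ from the endpoints and the error terms, forcing the time spent away to be short, but then the ejection lemma forces the solution that left at level $\de$ to keep growing (monotonicity of $d_W$ while $d_W\le\de_H$ and $\p_t d_W\ge0$ at the exit), so it cannot return to level $\de$ — contradiction.

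The delicate points: (a) correctly choosing the cutoff radius as a function of the modulation scale $e^{\s(t)}$ so that the virial identity closes despite the scaling invariance — this is precisely the feature absent in the subcritical NLKG case and is the reason the ``one-pass'' argument must be redone here rather than quoted; (b) handling the transition times between the hyperbolic and variational regimes, where one invokes Lemma~\ref{lem:lambda dom} (in particular \eqref{outer region}, giving $d_W\simeq|\la_1|$ in the intermediate annulus) and Corollary~\ref{cor:sign} to see that the sign $\Sg$, hence the sign of $K$, is locked throughout $(T_2,T_3)$ because $\Sg$ is continuous and $\{\pm1\}$-valued on $\HL_{(\de)}$; (c) the nonradial aspect — the cutoff must be centered at the modulated translation parameter $c(t)$, and one needs that $c_\ta e^{\s}$ is controlled (by \eqref{modul est}) so that the center does not move too fast relative to the cutoff scale, and that the exterior-energy flux through the moving cone is controlled.

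I expect the \textbf{main obstacle} to be item (a) together with the bookkeeping of the cutoff in the nonradial, scaling-critical setting: one must produce a single virial-type quantity whose time derivative dominates $-\Sg K(u)$ up to errors that are genuinely small compared to $\de^2$ uniformly over the (a priori unbounded in $\ta$) interval $(T_2,T_3)$, and for this the precise interplay between the decay of $W$, the smallness of the exterior energy, and the modulation bounds \eqref{modul est} must be exploited very carefully. The rest — the ejection-lemma input and the sign-locking via Corollary~\ref{cor:sign} — is by now fairly standard and parallels \cite{NakS, CNLW1}.
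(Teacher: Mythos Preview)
Your overall architecture (contradiction at a return time $T_3$, splitting $(T_2,T_3)$ into hyperbolic and variational regimes, sign-locking via Corollary~\ref{cor:sign}, virial lower bound versus endpoint upper bound) matches the paper's, but the nonradial core is missing and your cutoff choice cannot work as stated. You propose to tie the cutoff scale to $e^{-\s(t)}$ and center it at $c(t)$, but the modulation parameters $(\s(t),c(t))$ are defined only while $d_W(\vec u(t))\le\de_E$, whereas on most of $(T_2,T_3)$ one has $d_W>\de_H$; so a cutoff that moves with the modulation is not even defined there. The paper instead normalizes $\s(T_2)=0$, $c(T_2)=0$ (by rescaling and translation) and uses a \emph{fixed} light-cone cutoff $w(t,x)=\chi(|x|/(t-T_2+S))$ with $S=\de^{-1}$, controlling the exterior energy by finite propagation speed. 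The virial endpoint $|V_w(T_3)|$ then depends on $|c(T_3)|$, and this is the genuinely nonradial ingredient you are missing: the paper proves $|c(T_3)|\lec 1+\de(T_3-T_2)$ via a separate \emph{localized center-of-energy} functional $\CE(t)=\LR{xw|e(\vec u)}$, whose time derivative is $-P(\vec u)+O(\Ext(t))$, together with the observation that $|P(\vec u)|\lec\de$ since $\vec u(T_1)$ is $\de$-close to the zero-momentum ground state. Without this control on the drift of the center over the whole interval (not just in the hyperbolic regime, where your appeal to \eqref{modul est} would be valid), the virial endpoint at $T_3$ could be of order $T_3-T_2$ and the contradiction evaporates.

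There is a second gap you have not flagged, in the case $\Sg=+1$: Lemma~\ref{lem: variational} yields only $K(u)\gec\min(\ka(\de_S),c\|\na u\|_2^2)$, which may degenerate on $J_0$ if $\|\na u\|_2$ becomes small. The paper closes this with a third localized functional, the energy equipartition $\p_t\LR{w\dot u|u}\ge 2E(\vec u)-\|\na u\|_2^2+O(\de)$, which (with the same endpoint bounds as for $V_w$) rules out $\int_{T_2}^{T_3}\|\na u\|_2^2\,dt\ll T_3-T_2$. Finally, the quantitative balance is more delicate than ``virial integral $=O(\de^2)$'': the endpoint bound is $|V_w(T_2)|+|V_w(T_3)|\lec\de^{1/2}+\de^2(T_3-T_2)+\de(T_3-T_2)^{1/2}$, and the lower bound is $\int_{T_2}^{T_3}\sg K\,dt\gec\nu(\de,\de_H)\de(T_3-T_2)+\de_H$ with $\nu\to\I$ as $\de\to 0$; both the growing coefficient $\nu$ and the absolute term $\de_H$ (coming from the two guaranteed ejections at $T_2,T_3$) are needed to beat the endpoint terms.
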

\begin{proof}
By increasing $T_1$ and decreasing $T_2$ if necessary, we may assume in addition that $\sqrt 2\,\e<d_W (\vec u(T_1))$ and $\p_t d_W (\vec u(t))|_{t=T_1}\ge0$. Then Lemma \ref{lem: ejection} applies for all $t\in[T_1,T_2]$ and so $d_W (\vec u(t))$ is increasing for $t>T_1$ until it reaches $\de_H$ (the small absolute scale in the ejection lemma). Arguing by contradiction, we assume that for some $t>T_2$ we have $d_W (\vec u(t))\le\de$. Such a $t$ can occur only away from $T_2$ (this will be made more precise shortly), and after $d_W (\vec u(t))$ has increased to size $\de_H\gg\de$. Moreover, by applying Lemma~\ref{lem: ejection} backward in time, we can find $T_3>T_2$ such that $d_W (\vec u(t))$ decreases from $\de_H$ down to $\de$ as $t\nearrow T_3$, and so
 that $$d_W (\vec u(t))>\de=d_W (\vec u(T_3)) =d_W (\vec u(T_2))$$ for $T_2<t<T_3$. We may further assume
\EQ{\label{fav dil}
 \s(u(T_2))=0 \le \s(u(T_3)),}
by rescaling and reversing time, if necessary.   Here $\s$ is defined in Lemma \ref{lem:lambda dom}.

We now proceed by combining the proof ideas of the analogous theorem for the critical radial wave equation~\cite{CNLW1} with that for the subcritical nonradial  Klein-Gordon equation, see~\cite{NakS2} (with slight improvement).
Following the latter reference, we first show that the centers of the ground state as given by the path $c(t)$, diverge from each other between times $T_2$ and~$T_3$ by an amount $\ll T_3-T_2$.
Once this is done, we shall adapt the virial argument from~\cite{CNLW1} to the nonradial context, which will then allow us to exclude almost homoclinic orbits. It will be understood that all times $t$ belong to the interval~$I$.

By spatial translation, we may assume that $c(T_2)=0$. By the ejection we have
\EQ{
 T_3-T_2 \gec \log(\de_E/\de) \gg 1,}
and by the finite speed of propagation
\EQ{\label{FSP}
 |c(T_3)| \le T_3-T_2+O(1),}
where $c(t)=c(u(t))\in\R^d$ is defined by Lemma \ref{lem:lambda dom} as long as $\vec u(t)$ is close to $\Sm_0$, which is true when $t$ is close to $T_2$ or $T_3$.
Consider a localized center of energy defined by (with $\vec u= (u_1,u_2)$)
\EQ{
 \CE(t):=\LR{xw|e(\vec u)},\pq e(\vec u):=[|u_2|^2+|\na u_1|^2]/2-|u_1|^{2^*}/{2^*},}
where $w(t,x)$ is the cut-off function onto a light cone defined by
\EQ{ \label{def w}
 w(t,x)=\chi(|x|/(t-T_2+S))}
for some $1\ll S=S(\de)<\delta^{-2}$ to be determined, and some $\chi\in C^\I(\R)$ satisfying $\chi(r)=1$ for $|r|\le 1.5$ and $\chi(r)=0$ for $|r|\ge 2$.
Then using the equation of $u$, we have
\EQ{
 \dot\CE(t)\pt=\LR{\dot wx|e(\vec u)}-P(\vec u)+\LR{(1-w)u_2|\na u_1}-\LR{x u_2|\na w\cdot\na u_1}
 \pr=O(\Ext(t))-P(\vec u),}
where $\Ext(t):=\|\vec u(t)\|_{\HH(|x|>t-T_2+S)}^2$ denotes the exterior free energy.
Hence,
 \EQ{
 |\CE(T_{3})-\CE(T_{2})|\lec (T_{3}-T_{2})\max_{T_{2}\le t\le T_{3}}(\Ext(t)+|P(\vec u)|). }
The conserved momentum is small because
\EQ{
 |P(\vec u)| \le |P(\vec u)-P(T^c\vec S^\s \vec W)| \lec \|\uv(T_1)\|_\HH+\|\uv(T_1)\|_\HH^2 \lec \de.}
Using the finite propagation as in \cite{NakS2} and  \cite{CNLW1}  we have for all $t\ge T_2$
\EQ{
 \Ext(t) \lec \Ext(T_2) \lec S^{2-d} + \de^2 \lec S^{-1}   }
On the other hand, the radial symmetry and the rate of decay of $W$ imply that
\EQ{
 \pt|\CE(T_2)| \lec \sqrt{S}\|\uv\|_E + S\|\uv\|_E^2 \lec \sqrt{S} \delta}
The contribution of $W_{\s(T_3)} (x-c(T_3))$ at $t=T_3$ is estimated as follows. Denote $c_3:= c(T_3), \s_{3}:=\s(T_{3})$. Then
 \EQ{
 \CE(T_3) &= \LR{  xw | e(\vec W_{\s_{3}}(\cdot- c_3))} + \LR{  xw | e(\vec u(T_3)) - e(\vec W_{\s_{3}}(\cdot- c_3) )} \\
 &=: A+B
 }
Now, using \eqref{fav dil},
\EQ{
A &= c_3 \LR{  w | e(\vec W_{\s_{3}}(\cdot- c_3))}    +  \LR{  (x-c_3) w | e(\vec W_{\s_{3}}(\cdot- c_3))}  \\
&= c_3 (E(\vec W)+ o(1)) + \LR{  (x-c_3) w | e(\vec W_{\s_{3}}(\cdot- c_3)), }
}
where $o(1)$ is with respect to the limit $S\to\I$ (uniformly for the other parameters $c_3$, $\s_3$, $T_2$ and $T_3$).
Exploiting~\eqref{FSP} and the obvious cancellation yields
\EQ{
|\LR{  (x-c_3) w | e(\vec W_{\s_{3}}(\cdot- c_3))}  | \lec 1+ \log\big( 1+S^{-1}(T_3-T_2)\big).
}
On the other hand,
\EQ{
B &\lec (T_3-T_2+S) \de.
}
Combining these estimates yields
\EQ{
|c(T_{3})| &\lec | \CE(T_3) - c(T_3) (E(\vec W)+ o(1)) | + | \CE(T_3) -  \CE(T_2)| + | \CE(T_2)|,
}
and therefore
\EQ{\label{c3 bd}
|c(T_3)| %&\lec  1 + \log\big( 1+S^{-1}(T_3-T_2)\big) +  (T_3-T_2+S) \de + (T_3-T_2)S^{-1}+\sqrt{S} \delta \\&
&\lec   1+S^{-1}(T_3-T_2) +  (T_3-T_2+S) \de+\sqrt{S} \delta.
}
To obtain the desired contradiction, we use the localized virial identity
\EQ{\label{virial}
 \pt V_w(t):=\LR{wu_2|(x\na+d/2)u_1},
 \pr \dot V_w(t)=-K(u_1(t))+O(\Ext(t))=-K(u_1(t))+ O(S^{-1})}
 with the same choice of $w$ as above.
By similar considerations as above, one has the upper bounds
 \EQ{\label{oben1}
 |V_w(T_2)| &\lec  \de \, S^{\frac12}  ,  \\
 |V_w(T_3)| &\lec \de (|c_3| + (T_3-T_2+S)^{\frac12}) + \de^2(T_3-T_2+S)  .}
Setting $S:=\de^{-1}$ in~\eqref{c3 bd} implies
$$ |c(T_3)|\lec 1+ \de (T_3-T_2)$$
and
\EQ{\label{oben2}
 |V_w(T_2)| +  |V_w(T_3)| \lec \de^{\frac12} + \de^2(T_3-T_2) + \de (T_3-T_2)^{\frac12}
}

We claim that integrating the differential equation in~\eqref{virial} and exploiting the ejection dynamics and the variational structure (cf.~\cite{NakS}) leads to the lower bound
\EQ{\label{unten}
 \int_{T_2}^{T_3}\sg K(u_1(t))\,dt \gec \nu(\de,\de_H)\de(T_3-T_2)+\de_H,}
where $0<\nu(\de,\de_H)\to\I$ as $\de\to+0$ and $\de_H$ fixed. This clearly contradicts \eqref{oben2}, provided that we choose $\de_*\ll\de_H^2$ small enough.

It remains to prove~\eqref{unten}.
Let $\cT$ be the set of times at which the distance $d_W(\vec u(t))|_{[T_2,T_3]}$ reaches a local minima in $[\de,\de_S]$.
In particular, $\cT\ni T_2,T_3$ by the choice of $T_2$ and $T_3$.
For every $t_*\in\cT$, we can apply the ejection Lemma \ref{lem: ejection} from $t=t_*$ in both time directions.
Then we get an interval $I(t_*)\subset[T_2,T_3]$ such that $t_*\in I(t_*)$, $d_W(\vec u)$ within $I(t_*)$ is decreasing for $t<t_*$ and increasing for $t>t_*$, and $d_W(\vec u)=\de_H$ on $\p I(t_*)\setminus\{T_2,T_3\}$.
Moreover, imposing
\EQ{ \label{vari cond eps}
 0<\e_* < \e_1(\de_S),}
we can ensure that $\vec u$ stays in $\HL_{\de_S}$ for $t\in[T_2,T_3]$, so that the sign $\sg$ in the ejection lemma is the same for all $I(t_*)$ by Corollary \ref{cor:sign}.
Furthermore, the exponential behavior allows us to estimate
\EQ{ \label{eject est'}
 \pt |I(t_*)|=\int_{t\in I(t_*)}e^{-\s}d\ta \simeq e^{-\s(t_*)}\log(\de_H/d_W(\vec u(t_*))) \le e^{-\s(t_*)}\log(\de_H/\de),
 \pr \int_{I(t_*)}\sg K(u(t))dt \gec \int_{t\in I(t_*)}(e^{k\ta}-C_*)d_W(\vec u(t_*))e^{-\s}d\ta \simeq e^{-\s(t_*)}\de_H.}
Summing this over all $t_*\in\cT$ including $T_2$ and $T_3$, we get
\EQ{ \label{eq:L2}
 \int_{J_1} \sg K(u(t)) dt \gec \de_H + \frac{\de_H/\de}{\log(\de_H/\de)}\de|J_{1}|,\pq J_1:=\Cu_{t_*\in\cT}I(t_*).}
For the remaining times, we have
\EQ{
 \inf_{t\in J_0}d_W(\vec u(t))\ge\de_S, \pq J_0:=[T_2,T_3]\setminus J_1,}
by the definition of $J_1$, so that under \eqref{vari cond eps} we can use the variational bound of Lemma~\ref{lem: variational}.
If $\sg=-1$, then we have
\EQ{\label{eq:L1}
 \int_{J_{0}} \sg \, K(u(t))\, dt \ge \kappa(\de_S)|J_{0}|. }
Adding \eqref{eq:L1} and~\eqref{eq:L2} concludes the $\sg=-1$ case of~\eqref{unten}.

If $\sg=+1$, then the same argument encounters the difficulty that outside of the $\de_{H}$-ball the lower bound of Lemma~\ref{lem: variational} may become degenerate due to smallness of~$\|\na u\|_{2}^{2}$.
Indeed, replacing $\ka(\de_S)$ in the above argument by $\min(\ka(\de_S),c\|\na u\|_2^2)$ and using the uniform bound on $\|\vec u\|_\HH$ in the region $\Sg=+1$ yields
\EQ{
\label{inter bd}
\int_{T_2}^{T_3}  K(u_1(t))\,dt \gec \frac{\de_H/\de}{\log(\de_H/\de)}\de \int_{T_2}^{T_3} \| \na u(t)\|_{2}^{2}\,dt + \de_H. }
This leads to \eqref{unten} for $\sg=+1$ if
\EQ{ \label{inter bd1}
\int_{T_2}^{T_3} \| \na u(t)\|_{2}^{2}\,dt \gtrsim T_{3}-T_{2}. }
Therefore assume that
\EQ{\label{klein}
 \int_{T_2}^{T_3} \| \na u(t)\|_{2}^{2}\,dt \le \kappa_{2}( T_{3}-T_{2})}
with some absolute constant $\kappa_{2}$.
To lead~\eqref{klein} to a contradiction, we use the (localized) energy equipartition
\EQ{\label{vir 2}
 \p_t\LR{wu_t|u}\pt=\|\dot u(t)\|_2^2-K(u(t))+O(\Ext(t))
 \ge 2E(\vec u)-\|\na u\|_2^2 + O(\de),
}
where $w$, $\Ext$ and $S=\de^{-1}$ are as before.
Taking $\de_*,\ka_2\ll J(W)$, we obtain
\EQ{\label{fertig}
 [\LR{wu_t|u}]_{T_{2}}^{T_{3}} \ge E(\vec u)(T_{3}-T_{2}).
}
On the other hand, the same argument as for \eqref{oben2} yields
\EQ{
|\LR{wu_t|u}(T_{3}) - \LR{wu_t|u}(T_{2})| \lec \de^{1/2} + \de^2(T_3-T_2) + \de(T_3-T_2)^{1/2},}
which contradicts~\eqref{fertig} since $T_{3}-T_{2}\gg 1 \gg \de$.
\end{proof}

The above result has some important implications for the sign functional from Corollary~\ref{cor:sign}. To be specific, let
\EQ{
 \pt \HH_* = \{\U\fy\in\HH \mid E(\U\fy)\le J(W)+\e_*^2\},
 \pr \HH_X = \{\U\fy\in\HH_* \mid E(\U\fy)< J(W)+d_W^2(\U\fy)/2\}.}
It is easy to see that $\HH_*\setminus \HH_X$ is in a small neighborhood of $\pm\Sm_0$.

\begin{cor} \label{cor: onepass}
The sign function $\Sg$ in Corollary \ref{cor:sign} is continuous on $\HH_X$, and has the following properties.
\begin{enumerate}
\item Every solution $u$ in $\HH_*$ can change $\Sg(\vec u(t))$ at most once. Moreover, it can enter or exit the region $d_W(\vec u)<\de_*$ at most once.
\item The region $\Sg=+1$ is bounded in $\HH$, while the region $\Sg=-1$ is unbounded.
\item If $\U\fy\in\HH_X$ and $E(\U\fy)\le J(W)+\e_1^2(d_W(\U\fy))$, then $\Sg(\U\fy)=\sign K(\fy_1)$, with the convention $\sign 0=+1$.
\item If $\U\fy\in\HH_X$ and $d_W(\U\fy)\le \de_S$, then $\Sg(\U\fy)=-\sign\la_{1}(\fy_1)$.
\end{enumerate}
\end{cor}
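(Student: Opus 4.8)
The plan is to deduce the four properties directly from the one-pass theorem (Theorem \ref{thm: onepass}), the ejection lemma (Lemma \ref{lem: ejection}), the variational lemma (Lemma \ref{lem: variational}) and the definition of $\Sg$ in Corollary \ref{cor:sign}. The continuity of $\Sg$ on $\HH_X$ is the crucial point and should be established first, since properties (1)--(4) then follow with comparatively little extra work.

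For continuity, I would argue that $\HH_X$ is covered by two open sets on which $\Sg$ is manifestly continuous: the ``inner'' set where $d_W(\U\fy)<\de_E$, on which $\Sg=-\sign\la_1$ by Corollary \ref{cor:sign} and $\la_1$ is a continuous functional not vanishing there (because on $\HH_X\cap\{d_W\le\de_E\}$ one has $E(\U\fy)<J(W)+d_W^2(\U\fy)/2$, so \eqref{outer region} applies and forces $d_W(\U\fy)\simeq|\la_1|$, hence $\la_1\ne0$), and the ``outer'' set where $d_W(\U\fy)>\de$ for $\de$ chosen below $\de_S$, on which $\Sg=\sign K(\fy_1)$; there Lemma \ref{lem: variational} (applicable because on $\HH_X$ one has $E<J(W)+\e_*^2$ with $\e_*$ small, hence $J(u_1)<J(W)+\e_1^2(\de)$ after peeling off the $u_2$-component as in that lemma's proof) shows $|K(\fy_1)|\gec\kappa(\de)>0$, so $\sign K$ is locally constant. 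On the overlap the two definitions agree by the construction in Corollary \ref{cor:sign}, so $\Sg$ is well-defined and continuous on all of $\HH_X$.

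Property (1): Suppose a solution $u$ in $\HH_*$ changed sign more than once. Whenever $\Sg(\vec u(t))$ changes it must cross the ``outer region'' $d_W(\vec u)>\de_*$ (since near $\pm\Sm_0$ the sign equals $-\sign\la_1$ and $\la_1$ is continuous in $t$, a sign change of $\la_1$ forces $\la_1=0$, hence $d_W(\vec u)$ large by \eqref{outer region}); thus between two sign changes there must be a time $T_2$ with $d_W(\vec u(T_2))=\de_*$ reached from $d_W<\de_*$, followed later by a return to $d_W\le\de_*$. But Theorem \ref{thm: onepass}, applied with $\de=\de_*$ and $\e=\e_*$ (this is where the relation $\e_*\ll\de_*\ll\de_H$ is used), forbids such a return: once $d_W$ exceeds $\de_*$ after an upcrossing, it stays above $\de_*$ for all later times in $I$. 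The same argument gives the second assertion of (1): the set $\{t: d_W(\vec u(t))<\de_*\}$ is an interval, so the solution can enter or exit it at most once.

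Properties (2), (3), (4) are immediate consequences. For (4), if $d_W(\U\fy)\le\de_S\le\de_E$ then the first clause of Corollary \ref{cor:sign} gives $\Sg=-\sign\la_1$ directly. For (3), if $E(\U\fy)\le J(W)+\e_1^2(d_W(\U\fy))$ with $d_W(\U\fy)>0$: if $d_W(\U\fy)\ge\de$ for suitable small $\de$ this is the second clause of Corollary \ref{cor:sign}; if $d_W(\U\fy)<\de$ is small then by \eqref{outer region} (valid since $\U\fy\in\HH_X$ forces $2(E-J(W))<d_W^2$) we have $d_W\simeq|\la_1|$ with $\la_1\ne0$, and expanding $K(\fy_1)=-(2^*-2)\LR{W^{2^*-1}|\fy_1}+O(\|\fy_1\|_{\dot H^1}^2)$ as in \eqref{K W v}, inserting $\fy_1=\sg(\la_1\rho+\mu\na W+\ga_1)$ and using $\LR{W^{2^*-1}|\rho}=da_W>0$ together with $|\mu|,\|\ga_1\|\lec d_W^2$ shows $\sign K(\fy_1)=-\sign(\sg\la_1)$; but $\Sg=-\sg\,\sign\la_1$ by (4), so the two agree. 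Finally (2): the region $\Sg=-1$ contains, e.g., all $\U\fy=(c\na W,0)$ with $|c|$ large (where $K<0$ and $E\to-\I$), hence is unbounded; the region $\Sg=+1$ lies in $\{E<J(W)+\e_*^2,\ K\ge0\}$ away from $\pm\Sm_0$, and on $\{K\le0\}^c$ the bound \eqref{eq:Jun}, $J(u_1)=\frac1d\|\na u_1\|_2^2+\frac1{2^*}K(u_1)$, combined with $K(u_1)\ge0$ gives $\|\na u_1\|_2^2\le d\,J(u_1)\le d(J(W)+\e_*^2)$, while $\|u_2\|_2^2\le 2(E-\frac12\|\na u_1\|_2^2+\frac1{2^*}\|u_1\|_{2^*}^{2^*})$ is then also bounded via Sobolev, so $\Sg=+1$ is bounded in $\HH$.

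The main obstacle is the continuity proof, specifically verifying that the two formulas for $\Sg$ patch together continuously across the transition region $\de\le d_W\le\de_E$ without either $\la_1$ or $K$ vanishing there; this rests on the sign-matching computation \eqref{K W v} and on \eqref{outer region}, and is essentially the content already recorded in Corollary \ref{cor:sign}, so in the write-up I would simply invoke that construction rather than redo it. Everything else is bookkeeping on top of Theorem \ref{thm: onepass}.
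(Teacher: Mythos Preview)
The paper itself omits this proof entirely, deferring to the radial case~\cite{CNLW1}, so your attempt supplies what the paper does not. Your overall strategy---continuity from Corollary~\ref{cor:sign}, then the one-pass theorem for~(1), direct variational/expansion arguments for (2)--(4)---is the intended one, and your treatments of continuity, (2), and (4) are fine. However, your argument for the first assertion of~(1) has a genuine gap.

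You claim that whenever $\Sg(\vec u(t))$ changes, the trajectory must cross the \emph{outer} region $d_W>\de_*$. This is backwards: a sign change can only occur across the set where $\Sg$ is undefined, namely $\HH_*\setminus\HH_X=\{d_W^2\le 2(E(\vec u)-J(W))\}$, which lies in the \emph{inner} region $d_W\le\sqrt2\,\e_*<\de_*$. Your parenthetical correctly shows that on $\HH_X\cap\{d_W<\de_E\}$ one has $\la_1\ne0$, so $\Sg$ is locally constant there; combined with $K\ne0$ on $\HH_X\cap\{d_W\ge\de_S\}$, this yields that $\Sg$ is locally constant on all of $\{t:\vec u(t)\in\HH_X\}$. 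What you still need is that this set has at most two connected components. The fix is to apply Theorem~\ref{thm: onepass} not with $\de=\de_*$ but with $\de$ ranging over $(\de_0,\de_*]$, where $\de_0:=\sqrt{2(E(\vec u)-J(W))_+}$: for each such $\de$ the set $\{t:d_W(\vec u(t))<\de\}$ is an interval, and intersecting over $\de\downarrow\de_0$ shows that $\{t:\vec u(t)\notin\HH_X\}$ is itself an interval. Its complement therefore has at most two components, and $\Sg$ changes at most once. Your application with $\de=\de_*$ establishes only the second assertion of~(1). A smaller point: in your check of~(3) for small $d_W$ you assert $\|\ga_1\|\lec d_W^2$, but in general only $\|\ga_1\|\lec d_W$ holds, so the expansion does not by itself pin down $\sign K$; it is cleaner to obtain~(3) from the uniqueness clause of Corollary~\ref{cor:sign} as $\de$ varies.
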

The proof is the same as in the radial case \cite{CNLW1}, so we omit it.
Note that $\HH_*\setminus\HH_X$ is included in $d_W<\de_*$, and that (3)--(4) completely determine $\Sg(\U\fy)$, since we have chosen $\e_*<\e_1(\de_S)$ in \eqref{vari cond eps}. Moreover, $\Sg(\U\fy)$ depends only on $\fy_1$.

It remains to determine the fate of the solutions in $\HH_*$ with $d_W\ge\de_*$. We will do this in the following two sections for $\Sg=\pm1$ , respectively.

\section{Blow-up after ejection }

In analogy to \cite{CNLW1}, we now prove\footnote{The proof is essentially the same as in \cite{CNLW1}, but since we employ somewhat different notation, we provide the details for the reader's convenience.} the following

\begin{prop}\label{prop:blowup} No solution $\vec{u}\in \HH_*$ satisfying the conditions in Theorem~\ref{thm: onepass}  can stay strongly continuous with respect to the topology of $\HH_*$ and satisfy the requirements
\EQ{
 d_W(\vec{u})\geq \delta,\pq \Sg = -1,}
for all $t>T_2$.
\end{prop}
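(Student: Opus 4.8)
The plan is to combine the ejection dynamics from Lemma~\ref{lem: ejection} with the one-pass theorem (Theorem~\ref{thm: onepass}) and the convexity of a suitable virial-type quantity to force finite-time blowup. Suppose toward a contradiction that $\vec u$ exists on a forward interval $[T_2,\infty)$ (if the maximal time were finite in forward time with $u$ not blowing up, one would have to argue separately, but $\HH_*$-strong continuity up to a finite maximal time with bounded $\HH$-norm contradicts the blowup criterion; so the substantive case is the global one) and satisfies $d_W(\vec u(t))\ge\delta$, $\Sg(\vec u(t))=-1$ for all $t>T_2$. By Corollary~\ref{cor: onepass}(1), the solution can enter or exit $\{d_W<\delta_*\}$ at most once, so after the ejection at time $T_2$ one has $d_W(\vec u(t))\ge\delta_*$ for \emph{all} $t>T_2$; in particular $\vec u(t)$ stays uniformly away from the soliton tube and in $\HL_{(\delta_S)}$, so $\sg=-1$ is locked in. Then Corollary~\ref{cor: onepass}(3) (together with $E(\vec u)\le J(W)+\e_*^2$ and $\e_*<\e_1(\delta_S)$) forces, via Lemma~\ref{lem: variational}, the uniform sign and size bound $K(u_1(t))<-\kappa(\delta_*)<0$ for all $t>T_2$.

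The second step is to run the (unweighted, or weighted with an expanding light cone as in the one-pass argument) virial identity. With $V(t):=\LR{u_2\mid (x\cdot\na + d/2)u_1}$, modulo the exterior error $\Ext(t)$ one has $\dot V(t) = -K(u_1(t)) \gtrsim \kappa(\delta_*)$, so $V(t)\to+\infty$ linearly. Since $\Sg=-1$ the region is unbounded in $\HH$, so I cannot directly bound $\|\vec u(t)\|_\HH$; instead I would localize. Using the light-cone cutoff $w$ from~\eqref{def w} with an appropriately chosen (large, fixed) $S$, the finite-speed-of-propagation bound $\Ext(t)\lesssim S^{2-d}+\delta^2$ controls the error, and the localized $V_w$ grows linearly: $V_w(T)\gtrsim \kappa(\delta_*)(T-T_2)$ for $T$ large. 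On the other hand $|V_w(t)|\lesssim (t-T_2+S)\,\|\vec u(t)\|_{\HH(|x|\le 2(t-T_2+S))}$, and one needs this to be incompatible with linear growth of the right order. This is exactly where the energy–Virial (equipartition) identity and the structure of the $K$-negative regime enter: for $K(u_1)\le -\kappa$ one gets, from $\frac1d\|\na u_1\|_2^2 = E(\vec u) - \frac12\|u_2\|_2^2 - \frac1{2^*}K(u_1)$, a lower bound $\|\na u_1\|_2^2 \gtrsim |K(u_1)|$, hence $\|\na u_1\|_2$ is bounded \emph{below}; so the blowup mechanism is that the Payne–Sattinger/Kenig–Merle rigidity applies on the exterior-truncated solution.

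The cleanest route, which I would follow, is to directly invoke the Kenig–Merle convexity argument adapted to the truncated functional, exactly as in~\cite{CNLW1}: define $y(t):=\LR{w(t)\,|\,u_1(t)^2}$ (a localized $\|u_1\|_2^2$), compute $\dot y$ and $\ddot y$, and show $\ddot y(t) \le -c\,|K(u_1(t))| + O(\Ext(t)) \le -c\,\kappa(\delta_*) < 0$ for all large $t$, while $y(t)\ge 0$. A nonnegative function with second derivative bounded above by a negative constant on an infinite interval cannot exist — contradiction. The main obstacle, and the place requiring care, is the interaction between the moving center $c(t)$ (which is only controlled near the soliton tube and a priori can drift) and the light-cone localization: one must choose the cone wide enough and invoke finite speed of propagation to ensure the bulk of the energy and of $u_1$ stays inside the region where $w\equiv 1$, so that the localization errors are genuinely lower-order. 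Once the exterior energy is shown to stay $\lesssim S^{2-d}+\delta^2$ uniformly in $t$ (finite speed of propagation, as in the one-pass theorem), the convexity estimate closes and the contradiction follows. I expect the translation freedom — absent in the radial case~\cite{CNLW1} — to be the only genuinely new difficulty, and it is handled precisely by the finite-propagation control already established in the proof of Theorem~\ref{thm: onepass}.
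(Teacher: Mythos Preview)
Your proposal contains a genuine error in the core mechanism. You write that $\ddot y(t)\le -c\,|K(u_1(t))|+O(\Ext(t))<0$, and then conclude by the ``nonnegative function with uniformly negative second derivative'' argument. But the computation gives
\[
\ddot y = 2\bigl(\|\dot u\|_2^2 - K(u)\bigr) + O(\Ext(t)),
\]
and since $K(u)<0$ in the region $\Sg=-1$, this is \emph{positive}, not negative. A positive second derivative on $[T_2,\infty)$ is of course no contradiction at all.

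The actual mechanism, which the paper uses (and which is the classical Levine/Payne--Sattinger concavity trick), is to show
\[
\ddot y(t) \ge (1+c)\,\frac{\dot y(t)^2}{y(t)} + (\text{good terms}),
\]
so that $y^{-c}$ is concave and eventually strictly decreasing, hence must vanish in finite time. This requires two separate ingredients you do not mention: first, upgrading $\|\dot u\|_2^2-K(u)$ to $(2+2^*/2)\|\dot u\|_2^2+\cdots$ via the energy identity and the variational lower bound $\|\nabla u\|_2>\|\nabla W\|_2$ on the set $J_0$ where $d_W>\de_S$; and second, the Cauchy--Schwarz estimate $|\dot y|^2/y\le 4\|\dot u\|_2^2+O(\Ext)$. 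Neither step is present in your outline.

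There is also a gap in the claim that $K(u_1(t))<-\kappa(\de_*)$ \emph{uniformly} for $t>T_2$. The one-pass theorem gives $d_W\ge\de_*$, but $\de_*\ll\de_S$, so the solution may repeatedly enter the hyperbolic region $\de_*\le d_W<\de_S$, where Lemma~\ref{lem: variational} is \emph{not} available under the paper's standing hypothesis $\e_*<\e_1(\de_S)$ (only under the stronger $\e_*<\e_1(\de_*)$, which the paper explicitly does not assume). On those hyperbolic intervals $I(t_*)$ one only controls the \emph{time-integral} of $-K$, not its pointwise size; the paper handles this by showing that the correction terms on $J_1$ integrate to a favorable sign (see the passage leading to~\eqref{disc dec}). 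Your proposal skips this entirely.
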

\begin{proof}
Suppose, for the sake of contradiction, that the solution actually exists on $(0,\infty)$.
Write $w = \chi(\frac{|x|}{t+R})$ for some $R\gg 1$ to be chosen, and $\chi\in C_0^\infty(\R)$ a non-negative cutoff function, with $\chi'(r)\leq 0$ for $r\geq 0$ and $\chi(r) = 1$ on $r\leq 1$. Also introduce
\EQ{
y(t) = \langle w u | u\rangle.}
Then we have
\begin{equation}\label{eqN:doty}
\dot{y}(t) = \langle \dot{w}u + 2w\dot{u} | u\rangle \geq 2\langle w\dot{u} | u\rangle
\end{equation}
Writing $\Ext(t):=\|\vec u(t)\|_{\HH(|x|>t+R)}^2$,
we find using Hardy's inequality
\begin{align}\ddot{y} &= \langle 2w| \dot{u}^2 - |\nabla u|^2 + |u|^{2^*}\rangle + \langle\ddot{w}u|u\rangle + \langle 4\dot{w}u|\dot{u}\rangle + 2\langle u\nabla w|\nabla u\rangle\\
&=2(\|\dot{u}\|_{2}^2 - K(u)) + O(\Ext(t))
\end{align}
It follows from the finite propagation as before that we can choose $\tau$ large enough such that $\Ext(t)\ll \eps_{*}^2$ for all $t>0$.

We next follow the argument in the proof of Theorem~\ref{thm: onepass}, especially the part after \eqref{unten}. Thus with $T_2$ as in the proof of Theorem~\ref{thm: onepass}, we write
\EQ{
 [T_2,\infty) = J_0\cup J_1}
with $J_0$ and $J_1$ defined just as in the proof of Theorem~\ref{thm: onepass}, with $T_3$ replaced by $+\infty$.
Then as before we find $-K(u)>\kappa(\delta_M)$ on $J_0$; on the complement
\EQ{
 J_1=\Cu_{t_*\in\cT}I(t_*),}
we also obtain the lower bound
\EQ{
  -\int_{I(t_*)}K(u(t))\,dt \gg \de|I(t_*)|.}
We conclude that
\EQ{
\lim_{t\rightarrow+\infty}\dot{y}(t)  = \lim_{t\rightarrow+\infty}y(t) =  +\infty}
and $y(t)$ is increasing for large enough $t$.

Next, write
\begin{equation}\label{eqn:blowup1}
\|\dot{u}\|_{2}^2 - K(u) = (1+\frac{2^*}{2})\|\dot{u}\|_{2}^2 + \frac{2^*-2}{2}\|\nabla u\|_{2}^2 - 2^* E(\vec{u})
\end{equation}

If $t\in J_0$, then from the variational characterization of $W$ for $K<0$, we have
\EQ{
 \|\nabla u(t)\|_2>\|\nabla W\|_2 }
and so
\[
E(\vec{u})<J(W)+\eps_*^2 = \frac{2^* - 2}{2\cdot 2^{*}}\|\nabla W\|_{2}^2+\eps_*^2<\frac{2^* - 2}{2\cdot 2^{*}}\|\nabla u(t)\|_{2}^2+\eps_*^2,
\]
which in conjunction with \eqref{eqn:blowup1} implies
\begin{equation}\label{eqn:blowup2}
\|\dot{u}\|_{2}^2 - K(u)>(1+\frac{2^*}{2})\|\dot{u}\|_{2}^2 - 2^*\eps_*^2.
\end{equation}
We also have the bound from Lemma \ref{lem: variational}
\begin{equation}\label{eqn:blowup3}
 \|\dot{u}\|_{2}^2 - K(u)> \|\dot{u}\|_{2}^2 + \kappa(\delta_S),
\end{equation}
which on account of \eqref{vari cond eps} and interpolation with the bound \eqref{eqn:blowup2} implies
\begin{equation}\label{eqn:blowup4}
\ddot{y}(t)>4(1+c)\|\dot{u}(t)\|_{2}^2 + 2\e_*^2,
\end{equation}
provided $t\in J_0$, where $c>0$ (e.g.~$c=1/(2(d-1))$).

Next, we consider the case when $t\in J_1$. We use the following general inequality in $\dot H^1(\R^d)$, \cite[Lemma 5.2]{CNLW1}:
\EQ{
 \|\nabla W\|_{2}^2\leq \|\nabla u\|_2^2 + \frac{d-2}{2}K(u) + O(K^2(u)/\|\nabla u\|_2^2).}
As $\|\nabla u(t)\|_2^2\simeq \|\nabla W\|_2^2$ for $t\in J_1$, it follows that
\EQ{
 E(\vec{u})<J(W)+\eps_*^2 \leq \frac{2^{*} - 2}{2\cdot 2^{*}}\|\nabla u\|_2^2 + \frac{d-2}{2d}K(u) + O(K^2(u)+\eps_*^2). }
Then \eqref{eqn:blowup1} implies that if $t\in J_1$,
\begin{equation}\label{eqn:blowup5}
 \ddot{y}>4(1+c)\|\dot{u}\|_2^2 - 2K(u) - O(K^2(u)+\eps_*^2).
\end{equation}

To obtain a contradiction, we next observe that
\EQ{
 \LR{\dot wu|u}=-\LR{x\cdot\na w||\ti w u|^2}/(t+R)
 =\LR{wu|\ti w(r\p_r+d/2)\ti w u}/(t+R),}
where $\ti w:=\ti\chi(|x|/(t+R))$ with some $\ti\chi\in C^\I(\R)$ satisfying $\ti\chi=1$ on $\supp\chi'$ and $\ti\chi(r)=0$ for $|r|\le 1$. Hence
\EQ{
 |\dot{y}| \le \|wu\|_2\|2\dot u+\ti w(r\p_r+d/2)\ti wu/(t+R)\|_2}
and so
\EQ{
 |\dot y|^2/y \pt\le \|2\dot u+\ti w(r\p_r+d/2)\ti wu/(t+R)\|_2^2
 \pn\le 4\|\dot u\|_2^2 + O(\Ext(t)).}
We then infer from \eqref{eqn:blowup4} that for $t\in J_0$,
\begin{equation}\label{eqn:blowup6}
\ddot{y}(t)\geq (1+c)\frac{\dot{y}(t)^2}{y(t)}+\eps_*^2,
\end{equation}
and from \eqref{eqn:blowup5} that for $t\in J_1$,
\begin{equation}\label{eqn:blowup7}
 \ddot{y}(t)\geq (1+c)\frac{\dot{y}(t)^2}{y(t)}-K(u(t)) - O(K^2(u(t))+\eps_*^2).
\end{equation}
Now consider
\EQ{
  \partial_t^2 y^{-c} = -cy^{-1-c}\left[\ddot{y} - (1+c)\frac{\dot{y}(t)^2}{y(t)}\right].}
Again from the asymptotic behavior of $K(u)$ on each $I(t_*)$ in $J_1$ given by Lemma~\ref{lem: ejection}, \eqref{eqn:blowup7}, and the fact that $y^{-1-c}$ is decreasing for large enough $t$ imply that
\EQ{ \label{disc dec}
 \int_{J(t_*)\cap(-\I,T)}y(t)^{-1-c}[-K(u(t)) - O(K^2(u(t))+\eps_*^2)]\,dt<0,}
for large $t_*>T_2$ and any $T>t_*$.  In particular, we infer that
\EQ{\label{eqn:blowupcrux}
 -\p_ty^{-c}(t_*) \ge \inf_{t\in I(t_*)}\p_ty^{-c}(t),}
while $\p_ty^{-c}(t)$ is decreasing in $J_0$.
Hence for some $t_*\in\cT$ and for all $t>t_*$ we have
\EQ{
 \p_ty^{-c}(t)\le \p_ty^{-c}(t_*)<0.}
This implies finite time blow up, contradicting the earlier assumption.
\end{proof}

\section{Scattering after ejection}
Here we essentially repeat the argument given in \cite{CNLW1} for the reader's convenience, with the small changes necessitated  by the presence of space  and momentum translations.
In the region $\Sg=+1$, we already know that all solutions are uniformly bounded in $\HH$, but it is not sufficient for global existence of strongly continuous solution in the critical case. Now we resort to the recent result by Duyckaerts-Kenig-Merle \cite{DKM2} to preclude concentration (type II) blow-up. This is the only place where we have to restrict the dimensions\footnote{Strictly speaking, the long-time perturbation argument should be also modified for $d>6$ in the scattering proof of Proposition \ref{prop:Sg+ scatt}, but it is a minor issue. See \cite{CNLKG,ScatBlow} for the solution.} to $3$ or $5$

\begin{prop} \label{prop:Sg+ global}
No solution as in Theorem~\ref{thm: onepass}
blows up in $\HH_X$ with $\Sg=+1$ in the region $t\geq T_2$.
\end{prop}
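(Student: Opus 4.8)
The plan is to argue by contradiction: suppose such a solution blows up in forward time at $T_* < \infty$. We want to apply the rigidity/blowup characterization of Duyckaerts--Kenig--Merle \cite{DKM2}, which says that a type II blowup solution of \eqref{main PDE} in $d=3,5$ decomposes, near the blowup time, into a sum of rescaled (and translated) ground states plus a regular part converging in $\HH$. First I would establish that the blowup is indeed type II, i.e.\ $\limsup_{t\nearrow T_*}\|\vec u(t)\|_\HH < \infty$: this is precisely the content of the uniform bound on $\HH_X$ with $\Sg=+1$ recorded in Corollary~\ref{cor: onepass}(2). (One should be slightly careful that the bound derived there is valid up to the maximal existence time; this follows because $\Sg$ is conserved along the flow by the one-pass theorem, Corollary~\ref{cor: onepass}(1), so the solution stays in the region $\Sg=+1$, which is bounded in $\HH$.)

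Next I would invoke \cite{DKM2} to write, along a sequence $t_n\nearrow T_*$,
\EQ{
 \vec u(t_n) = \vec u^* + \sum_{j=1}^{J} \vec W_{\s_j(t_n)}(\cdot - c_j(t_n)) \cdot (\pm 1) + \vec r_n, \qquad \|\vec r_n\|_\HH \to 0,
}
with $e^{\s_j(t_n)}(T_*-t_n)^{-1}\to\infty$ for the concentrating bubbles, and some fixed $\vec u^* \in \HH$. The number $J\ge 1$ of bubbles is positive precisely because the blowup is not a mere continuation (a regular limit would mean the solution extends past $T_*$). Then I would compute the energy and the $d_W$ distance of this configuration. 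By the almost-orthogonality of the bubbles and the regular part, $E(\vec u) \approx E(\vec u^*) + J\cdot J(W)$ up to small errors; since $E(\vec u)\le J(W)+\e_*^2$ with $\e_*$ tiny and $E(\vec u^*)\ge$ (some controlled quantity, in fact $\gtrsim -C$ by Sobolev), one forces $J=1$ and $\|\vec u^*\|_\HH$ small, hence $E(\vec u^*)$ small and nonnegative-ish. More importantly, a single concentrating bubble with a small regular tail means $d_W(\vec u(t_n)) \to 0$: the configuration is $o(1)$-close in $\HH$ to $\pm\vec W_{\s}(\cdot-c)$, so by \eqref{energy dist} we get $d_W(\vec u(t_n))\to 0$.

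The contradiction now comes from the one-pass theorem and the ejection picture. Since $\vec u$ satisfies the hypotheses of Theorem~\ref{thm: onepass} (it is the solution described in Proposition~\ref{prop:blowup}/\ref{prop:Sg+ global}, having been ejected from the soliton neighborhood at time $T_2$ with $d_W(\vec u(T_2))=\de$), we know $d_W(\vec u(t)) > \de > 0$ for all $t > T_2$, so $d_W(\vec u(t_n)) > \de$ cannot tend to $0$. This is the contradiction. \textbf{The main obstacle} I anticipate is not the soft contradiction at the end but the bookkeeping in applying \cite{DKM2}: one must check that the $\Sg=+1$, $d_W\ge\de$ solution really is a type II blowup (uniform $\HH$ bound up to $T_*$, which needs the conservation of $\Sg$ and the boundedness of the $\Sg=+1$ region), and one must correctly match the energy/momentum constraints with the bubble decomposition to rule out $J\ge 2$ and to see that a single bubble forces $d_W\to 0$ along the sequence. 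The momentum plays essentially no role here (it is $O(\de)$ small), and the space translations $c_j(t_n)$ are harmless since $d_W$ is translation-invariant by \eqref{energy dist}.
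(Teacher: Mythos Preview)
Your overall strategy---contradiction via the Duyckaerts--Kenig--Merle bubble decomposition, forcing $d_W\to 0$ against the one-pass lower bound---is exactly the paper's. But there is a real gap in how you invoke \cite{DKM2}. That result does not apply to arbitrary type~II blowup; its hypothesis is the \emph{small} type~II condition
\[
 \sup_{t}\|\nabla u(t)\|_2^2 \le \|\nabla W\|_2^2 + \eta_0
\]
for a specific small $\eta_0>0$. Corollary~\ref{cor: onepass}(2) gives only a uniform $\HH$ bound, which in the hyperbolic region $d_W\le\de_E$ translates into $\|\nabla u\|_2^2\le\|\nabla W\|_2^2+O(\de_E)$, and $\de_E$ is a fixed structural constant that need not beat $\eta_0$. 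The paper closes this gap with a preliminary step you omit: it first uses the ejection lemma (Lemma~\ref{lem: ejection}) to rule out blowup while $d_W\le\de_H$, since during ejection $\sigma$ changes by $O(\de_H)$ and the exit to $d_W=\de_H$ takes bounded $t$-time. Hence near the blowup time $d_W>\de_H$, where $\Sg=+1$ forces $K(u)\ge 0$, and then
\[
 \frac{\|\dot u\|_2^2}{2}+\frac{\|\nabla u\|_2^2}{d}=E(\vec u)-\frac{K(u)}{2^*}<J(W)+\eps_*^2=\frac{\|\nabla W\|_2^2}{d}+\eps_*^2,
\]
which matches the \cite{DKM2} hypothesis after shrinking $\eps_*$.

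A second, related point: your energy bookkeeping yields $E(\vec u^*)<\eps_*^2$, but that alone does not give $\|\vec u^*\|_\HH$ small (the energy is indefinite). The paper uses the orthogonal splitting of $K$ along the concentrating bubble to deduce $K(u^*_1)\ge 0$ from $K(u(t))\ge 0$, and only then $\|\vec u^*\|_\HH^2\lesssim E(\vec u^*)-K(u^*_1)/2^*<\eps_*^2$, which is what actually forces $d_W(\vec u(t))\to 0$ and produces the contradiction with $d_W>\de_H$. Once these two points are in place (and both hinge on having $K\ge 0$ near blowup), your argument and the paper's coincide; note also that under the small type~II hypothesis \cite{DKM2} already delivers a single bubble, so your separate $J=1$ step becomes unnecessary.
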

\begin{proof}
First,   Lemma~\ref{lem: ejection} precludes blow-up in the hyperbolic region, since the scaling parameter is a priori bounded during the ejection process, which is valid when reversing the time direction. Hence a blow-up may happen only when $d_W(\vec u(t))>\de_H$, where $K(u(t))\ge 0$ and so the energy assumption in Theorem~\ref{thm: onepass}  implies
\EQ{
 \frac{\|\dot u(t)\|_2^2}{2}+\frac{\|\na u(t)\|_2^2}{d}=E(\vec u)-\frac{K(u(t))}{2^*} < J(W)+\eps_*^2 = \frac{\|\na W\|_2^2}{d}+\eps_*^2.}
This allows us to employ the main result in \cite{DKM2}, after reducing $\eps_*$ if necessary.
Suppose $u$ is a solution on $[0,T_+)$ in $\HH_X$ with $\Sg=+1$ and $d_W(\vec u(t))>\delta_H$ with the blow-up time $T_+<\I$.
According to their result, we can then write for $t$ sufficiently near $T_+$
\EQ{
 \vec u(t) = \vec W_{\s(t)}(x-c(t)) + \U\fy + o(1) \pq\IN\HH,}
for some $\s(t)\to\I$, $c(t)\in\R^d$ and some fixed $\U\fy\in\HH$.
It is then easily checked that as $t \to T_+-0$ we have
\EQ{ \label{eq:Kv}
 K(u(t)) = K(W_{\s(t)}) + K(\fy_1) + o(1) = K(\fy_1) +o(1),}
from which we infer in particular that $K(\fy_1)\ge 0$. Similarly, we obtain
\EQ{ \label{eq:Ev}
 J(W)+\eps_*^2> E(\vec u) = J(W) + E(\U\fy),}
which implies via $K(\fy_1)\ge 0$,
\EQ{
 \|\fy_2\|_2^2/2 + \|\na\fy_1\|_2^2/d = E(\U\fy)-K(\fy_1)/2^*< \eps_*^2.}
This however contradicts $d_W(\vec u(t))>\de_H \gg \eps_*$ near $T_+$.
\end{proof}

Next we employ the Kenig-Merle scheme from \cite{KM1,KM2} to improve the above result. The one-pass theorem will be incorporated in the same way as in the subcritical case \cite{NakS}. Extinction of the critical element requires a little extra work due to the possibility of concentration, which will be however reduced to the above proposition.
\begin{prop} \label{prop:Sg+ scatt}
Every solution staying in $\HH_X$ with $\Sg=+1$ and $d_W\ge\delta_*$ for $t>0$ scatters to $0$ as $t\to+\I$ with uniformly bounded Strichartz norms on $[0,\I)$.
\end{prop}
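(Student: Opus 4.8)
\emph{Proof strategy.} The plan is to run the Kenig--Merle concentration--compactness/rigidity scheme inside the trapping region $\{\Sg=+1,\ d_W\ge\de_*\}$, incorporating the one-pass theorem exactly as in the subcritical works. By the small-data theory, a solution as in the statement can fail to scatter with an infinite $L^q_{t,x}$ norm only if either it blows up in forward time --- which is excluded by Proposition~\ref{prop:Sg+ global} --- or it is global on $[0,\I)$ with $\|u\|_{L^q_{t,x}([0,\I)\times\R^d)}=\I$. Assume, for contradiction, that such a ``bad'' solution exists. I would first record that any bad solution has $E(\vec u)\ge J(W)$: if $E(\vec u)<J(W)$, then by Corollary~\ref{cor: onepass}(3) the condition $\Sg=+1$ forces $K(u(t))\ge0$, and then the Kenig--Merle theorem \cite{KM1,KM2} gives global existence and scattering, a contradiction. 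Hence the energies of bad solutions lie in the narrow band $[J(W),J(W)+\e_*^2]$, and after a Lorentz transform (as in the reduction of Corollary~\ref{cor: Main} to Theorem~\ref{thm: Main}) we may take $P(\vec u)=0$, so only scaling $\vec S^\s$ and translation $T^c$ remain as relevant symmetries.

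Next I would extract a minimal critical element. Let $E_c:=\inf E(\vec u)$ over all bad solutions, take a minimizing sequence $\vec u_n(0)$, apply the Bahouri--Gérard profile decomposition in $\HH$, and run the usual nonlinear-profile/long-time-perturbation analysis (valid for $d=3,5$ with the standard Strichartz apparatus). Each nonlinear profile either scatters or is itself a bad solution; minimality of $E_c$ together with the almost-orthogonality of the profiles forces exactly one surviving profile, producing a solution $u_c$ that is global forward, non-scattering, has $E(\vec u_c)=E_c$, and whose forward trajectory $\{\vec u_c(t)\}$ is precompact in $\HH$ modulo the action of $\vec S^\s$ and $T^c$. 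The delicate point is that $u_c$ must again satisfy the trapping conditions: by continuity of $\Sg$ and $d_W$ and the one-pass theorem (Corollary~\ref{cor: onepass}), $\Sg(\vec u_c)=+1$ and $d_W(\vec u_c(t))\ge\de_*$ pass to the limit, and a profile degenerating to a rescaled (and boosted) ground state is excluded since it would violate $d_W\ge\de_*$. This is exactly where, following the remark preceding the statement, the concentration possibility in the extinction step is reduced to Proposition~\ref{prop:Sg+ global}.

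Then comes the rigidity step. Using the ejection Lemma~\ref{lem: ejection} one checks that $\vec u_c$ can dwell in the annulus $\de_*\le d_W\le\de_H$ for only a bounded interval of $t$ (the ejection forces exponential growth of $d_W$ in $\ta$ with $\s$ bounded there, hence growth in $t$ as well); combined with Corollary~\ref{cor: onepass}(1) this yields $T_0$ with $d_W(\vec u_c(t))\ge\de_H$ for $t\ge T_0$. On $[T_0,\I)$, Corollary~\ref{cor: onepass}(2) bounds $\|\vec u_c\|_\HH$, so Lemma~\ref{lem: variational} together with $\Sg=+1$ gives $K(u_c(t))\gec 1$ uniformly. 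Precompactness modulo $T^{c(t)}$ forces $|c(t)|=o(t)$ and spatial equi-tightness, so for any $\eta>0$ there is $\ba R=\ba R(\eta)$ with $\|\vec u_c(t)\|_{\HH(|x-c(t)|>\ba R)}^2<\eta$ for all $t\ge T_0$. Feeding this into the localized virial identity $\dot V_w(t)=-K(u_c(t))+O(\eta)$ with $w$ a light-cone cutoff centered near $c(t)$ as in~\eqref{virial}, and using $|V_w(t)|\lec\ba R\,\|\vec u_c\|_\HH\lec\ba R$ by Cauchy--Schwarz, integration over $[T_0,T]$ gives $\int_{T_0}^T K(u_c(t))\,dt\lec\ba R+\eta(T-T_0)$, whereas $K(u_c)\gec 1$ forces $\int_{T_0}^T K(u_c(t))\,dt\gec T-T_0$; choosing $\eta$ small yields $T-T_0\lec\ba R$, absurd as $T\to\I$. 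Hence $u_c\equiv0$, contradicting non-scattering; so no bad solution exists, and the uniform Strichartz bound on $[0,\I)$ is the standard by-product of the compactness argument.

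The hard part will be the profile/compactness step: propagating the trapping constraints $\Sg=+1$ and $d_W\ge\de_*$ through the Bahouri--Gérard decomposition and the nonlinear-profile limit --- in particular, ruling out a profile that concentrates onto a boosted, rescaled ground state, and handling the case where the critical element itself might concentrate --- which is precisely where the one-pass theorem, the continuity of the sign functional, and the reduction to Proposition~\ref{prop:Sg+ global} are indispensable. The rigidity step, by contrast, is a routine localized-virial computation once the moving center $c(t)$ is controlled by compactness.
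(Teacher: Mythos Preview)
Your overall strategy (Kenig--Merle compactness/rigidity inside the trapping region, with the one-pass theorem built in) matches the paper's, and your compactness step is essentially right. But the rigidity step has a genuine gap: you lose track of the scaling parameter.

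You correctly note that the critical element is precompact modulo \emph{both} $\vec S^\s$ and $T^c$, yet in the rigidity argument you write ``Precompactness modulo $T^{c(t)}$ forces $|c(t)|=o(t)$ and spatial equi-tightness, so for any $\eta>0$ there is $\ba R=\ba R(\eta)$ with $\|\vec u_c(t)\|_{\HH(|x-c(t)|>\ba R)}^2<\eta$.'' This is only true if the scaling parameter $\rho(t)=e^{-\s(t)}$ is bounded above and below; otherwise there is no fixed $\ba R$, and the localized virial with a fixed cutoff gives no contradiction. Controlling $\rho(t)$ is the entire content of the paper's Part~II. Related to this, your claim that ``Lemma~\ref{lem: variational} together with $\Sg=+1$ gives $K(u_c(t))\gec 1$ uniformly'' is not justified: the variational lemma yields only $K>\min(\ka(\de_H),c\|\na u_c\|_2^2)$, and nothing you have written prevents $\|\na u_c(t)\|_2\to 0$ along a subsequence (the energy can migrate into $\dot u$). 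The paper handles exactly this degeneracy with the localized energy equipartition $\p_t\LR{w\dot u|u}\ge 2E(\vec u)-\|\na u\|_2^2+O(\e_*)$, paired with the virial, to show $\liminf_{t\to\I}\rho(t)/t>0$.

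The paper's rigidity is thus quite different from yours: (i) finite propagation gives $\limsup\rho(t)/t<\I$; (ii) the virial \emph{plus} energy equipartition give $\liminf\rho(t)/t>0$; (iii) one then rescales $u_n(t,x)=s_n^{d/2-1}u_*(s_nt,s_nx)$ along a sequence $s_n\to\I$ to produce a solution $u_\I$ on $(0,1]$ supported in $\{|x|\le t\}$, which is a type~II blow-up at $t=0$ with $\Sg=+1$, contradicting Proposition~\ref{prop:Sg+ global}. This last step is where the reduction to Proposition~\ref{prop:Sg+ global} actually occurs --- in the extinction of the critical element, not in the profile step as you suggested. A smaller point: your Lorentz reduction to $P=0$ \emph{before} extracting the critical element is awkward, since the Lorentz transform changes time slices and need not preserve the constraint $d_W\ge\de_*$ for all $t>0$; the paper instead shows $|P(\vec u_*)|\lec\e_*$ \emph{after} extracting the critical element, which is all that the center-of-energy estimate needs.
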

The restriction $d_W\ge\delta_*$ is essential for the uniform Strichartz bound, since the latter does not hold for all scattering solutions, even for $E(\vec u)<J(W)$.
\begin{proof}
We argue by contradiction. Let $u_n$ be solutions on $[0,\I)$ in $\HH_X$ satisfying
\EQ{\label{eq:unseq}
 \pt E(\vec u_n) \to E_{*}\le J(W)+\eps_*^2,\quad \|u_{n}\|_{L^q_{t,x}(0,\I)} \to \I,
 \pr d_W(\vec u_n(t))\ge \delta_*, \pq \Sg(\vec u_n(t))=+1, \pq (t>0)
}
where we choose $q=2(d+1)/(d-2)$ so that $L^q_{t,x}$ is an admissible Strichartz norm for the wave equation on $\R^d$.
Here and after, $X(I)$ denotes the restriction to $I\times\R^d$ of the Banach function space $X$ on $\R\times\R^d$. It is well-known that $L^q_{t,x}$ and the energy norm are sufficient to control all the other Strichartz norms, such as $L^p_t \dot B^{1/2}_{p,2}$ with $p=2(d+1)/(d-1)$, as well as the nonlinear term in some dual admissible norm such as in $L^{p'}_t \dot B^{1/2}_{p',2}$ (see, for example, \cite{GSV}).

We may assume that $E_*$ is the minimum for the above property.
Following the Kenig-Merle argument, the proof consists of two parts: construction and exclusion of a critical element.

\medskip\noindent{\bf Part I}: Construction of a critical element.

Assuming the existence of \eqref{eq:unseq}, we are going to show that there is a critical element $u_*$, that is a solution on $[0,\I)$ in $\HH_X$ satisfying
\EQ{ \label{def crit}
 \pt E(\vec u_*) = E_*,\pq \|u_*\|_{L^q_{t,x}(0,\I)}=\I,
 \pq d_W(\vec u_*(t))\ge \de_*, \pq \Sg(\vec u_*(t))=+1,}
and that its trajectory is precompact modulo dilations and translations in $\HH$.

If $d_W(\vec u_n(0))<\delta_H$, then by Lemma~\ref{lem: ejection}, we have $d_W(\vec u_n(t))\ge \de_H$ at some later $t>0$. Since the Strichartz norm on the ejection time interval is uniformly bounded, we may time-translate each $u_n$ so that
\EQ{
 d_W(\vec u_n(0))\ge\delta_H,}
without losing \eqref{eq:unseq}.

Since we chose $\eps_*<\e_1(\de_S)\le\eps_1(\de_H)$, Lemma \ref{lem: variational} implies
\EQ{\label{eq:Klower}
 K(u_{n}(0)) \ge \min(\kappa(\delta_H),c\|\nabla u_{n}(0)\|_{2}^{2}).}
Now apply\footnote{In what follows, we will pass to subsequences without any further mention.} the Bahouri-G\'erard decomposition from~\cite{BaG}, see also Lemma~4.3 in \cite{KM2}, to $\{\vec u_n(0)\}_{n\ge1}$. Let $U(t)$ denotes the free wave propagator. We conclude that there exist $\lam^j_n>0$, $t^j_n\in\R$, $x^j_n\in \R^d$, $\U\fy^j\in\HH$ and free waves $w^J_n$ such that for any $J\ge1$
\EQ{ \label{eq:BG}
  \pt U(t)\vec u_{n}(0) = \sum_{j=1}^{J} \vec V^j_n(t) + \vec w^J_n(t),
  \pq \vec V^j_n(t):=U(t+t^j_n)T^j_n\U\fy^j,}
where $T^j_n:=T^{-x^j_n}\vec S^{\log\la^j_n}$, such that
\EQ{ \label{param orth}
 \pt |\log(\lam^j_n/\lam^k_n)|+\lam^j_n|t^j_n-t^k_n| + \lam^j_n|x^j_n-x^k_n| \to\I }
for each $j\not=k$,
\EQ{ \label{ene orth}
 \pt \lim_{n\to\I} \Bigl[ \|\vec u_{n}(0) \|_\HH^{2} -\sum_{j=1}^{J}  \|\vec V^j_n(0)\|_\HH^{2} - \|\vec w^J_n(0)\|_\HH^{2} \Bigr]=0,
 \pr \lim_{n\to\I} \Bigl[ E(\vec u_{n}(0)) - \sum_{j=1}^{J} E(\vec V^j_n(0)) - E(\vec w^J_n(0)) \Bigr] =0}
for each $J$, and
\EQ{ \label{Str vanish}
  \lim_{J\to\I}\limsup_{n\to\I}\|w^J_n\|_{L^\I_t L^{2^*}_x(\R)\cap L^q_{t,x}(\R)}=0.}
The last property applies to any other non-sharp Strichartz norm by interpolation, since those free waves are all uniformly bounded.

First we check that all components retain the property $K\ge 0$ at $t=0$. Indeed, one has
\EQ{
 E(\vec u_n)  - \frac{1}{2^*} K(u_{n}(0)) &\ge \frac 1d\|\vec u_n(0)\|_\HH^2
 = \sum_{j=1}^{J} \frac 1d \|\vec V^j_n(0)\|_\HH^{2} + \frac 1d \|\vec w^J_n(0)\|_\HH^{2} +o(1),}
where $o(1)\to 0$ as $n\to\I$.
Hence if $\|\na u_n(0)\|_2^2\lec\eps_*^2$, then $\|\na V^j_n(0)\|_2^2\lec\eps_*^2\ll 1$, and so $K(V^j_n(0))\ge 0$.
Otherwise, the lower bound in \eqref{eq:Klower} is much bigger than $\eps_*^2$, so for large $n$, we conclude from the above inequality  that
\EQ{
  \frac{\|\vec V^j_n(0)\|_\HH^2}{d} <  J(W),}
which implies $K(V^j_n(0)) \ge 0$, by the variational property of $W$. The same argument implies $K(w^J_n(0))\ge 0$ as well. Thus, each component has non-negative energy $E$.

Now let $U^j$ be the nonlinear profile associated with $V^j_n$, that is the nonlinear solution satisfying as $n\to\I$,
\EQ{ \label{eq:UjVj}
 \|\vec U^j(s^j_n)-U(s^j_n)\U\fy^j\|_\HH \to 0, \pq s^j_n:=\lam^j_n t^j_n,}
defined uniquely around $t=s^j_\I:=\lim_{n\to\I}s^j_n$, such that
\EQ{
 \|\vec U^j_n(0)-\vec V^j_n(0)\|_\HH\to 0\pq \vec U^j_n(t):=(T^j_n\vec U^j)(\lam^j_n(t+t^j_n)).}
By the scaling invariance of the equation, each $U^j_n$ is also a solution, defined locally around $t=0$. Hence the above property of $\vec V^j_n(0)$ is transferred to $U^j_n$:
\EQ{ \label{est on profiles}
 \pt K(U^j_n(0))\ge 0, \pq 0\le E(\vec U^j_n)=E(\vec U^j) \simeq \|\vec U^j_n(0)\|_\HH^2,
 \pr \sum_{j=1}^J E(\vec U^j)+\lim_{n\to\I}E(\vec w_n^J(0))=E_*.}
We may assume that $j=1$ gives the maximum among $E(\vec U^j)$, then by \cite{KM2}, each $U^j$ for $j>1$ exists globally and scatters with
\EQ{
 \sum_{j=2}^J \|U^j\|_{L^q_{t,x}(\R)}^2 \lec \sum_{j=2}^J E(\vec U^j) \le \frac{2}{3}J(W).}

Now assume $\|U^1\|_{L^q_{t,x}(\R)}<\I$, which is the case if $E(U^1)<J(W)$.
Then from the long-time perturbation theory, cf.~Theorem~2.20 in~\cite{KM2},
one obtains the {\em nonlinear profile decomposition}  for the solutions $u_{n}(t)$, provided $J$ is large and fixed,
and $n\ge n_0(J)$ is sufficiently large:
\EQ{\label{eq:nonlinearBG}
 \pt u_{n} = \sum_{j=1}^{J} U^j_n+ w^J_n  + R^J_n,
 \pr \lim_{J\to\I}\limsup_{n\to\I} \left[\|\vec R^J_n\|_{L^\I_t\HH(\R)}+\|R^J_n\|_{L^q_{t,x}(\R)}\right]= 0, }
which implies $u_n$ is bounded in $L^q_{t,x}$, contradicting \eqref{eq:unseq}. Thus we have obtained
\EQ{
 \| U^1\|_{L^q_{t,x}(T_-,T_+)}=\I, \pq J(W)\le E(\vec U^1)\le E_*, \pq \sum_{j=2}^JE(\vec U^j)+\|\vec w^J_n\|_\HH^2 \lec \e_*^2,}
where $(T_-,T_+)$ is the maximal existence interval of $U^1$.

We now distinguish three cases (a)--(c) by $s^1_\I=\lim_{n\to\I}\lam^1_nt^1_n$:

\medskip\noindent{\bf (a)} $s^1_\I=\I$. Then by definition \eqref{eq:UjVj}, $U^1$ is a local solution around $t=\I$ with finite Strichartz norms, and
\EQ{
 \|U^1_n\|_{L^q_{t,x}(0,\I)}=\|U^1\|_{L^q_{t,x}(s^1_n,\I)}\to 0.}
Hence we can use the long-time perturbation argument on $(0,\I)$, which gives a contradiction via~\eqref{eq:nonlinearBG} as above.

\medskip \noindent{\bf (b)} $s^1_\I=-\I$.  In this case $U^1$ scatters at $t=-\I$ by definition.

If $d_W(\vec U^1(t))>\delta_*/2$ for all $t<T_+$, then $\vec U^1(t)$ remains in the region $\Sg=+1$ from $t=-\I$. Hence $T_+=\I$ by Proposition \ref{prop:Sg+ global},
and $\|U^1\|_{L^q_{t,x}(0,\I)}=\I$. Moreover, Theorem~\ref{thm: onepass} together with Lemma~\ref{lem: ejection} implies that $d_W(\vec U^1(t))\ge\de_*$ for large $t$.
Hence $U^1$ is a critical element after some time translation.

Otherwise, $d_{W}(\vec U^1(t_{*})) = \delta_*/2$ at some minimal $t_{*}<T_{+}$, until which time $U^1$ remains in $\Sg=+1$, and $\| U^1 \|_{L^q_{t_x}(-\I,t_{*})} <\I$.
Hence one can apply the nonlinear profile decomposition on the interval $\lam^1_n(t+t^1_n)\le t_*$ as in \eqref{eq:nonlinearBG}, which yields in particular that, upon choosing $J$ sufficiently large,
\EQ{\label{eq:dSest}
 d_{W}(\vec u_{n}((t_{*}-s^1_n)/\lam^1_n))
 \le d_{W}(\vec U^1(t_{*})) + O(\eps_*) + o(1) \le \frac{2}{3}\delta_* +o(1),}
as $n\to\I$.  However, since $t_{*}-s^1_n\to\I$, this contradicts our assumption
\EQ{
 \inf_{t\ge 0}d_{W}(\vec u_{n}(t))\ge \delta_*.}

\medskip \noindent{\bf (c)} $s^1_\I\in\R$.
Then by the same perturbative arguments as above, the nonlinear profile decomposition~\eqref{eq:nonlinearBG} holds on any compact interval in $(T_-,T_+)/\lam^1_n-t^1_n$.
Thus, as in the case (b), we deduce from $\inf_{t\ge 0}d_{W}(\vec u_{n}(t))\ge\delta_*$ that
\EQ{
 \inf_{s^1_\I\le t<T_+} d_{W}(\V U^1(t)) \ge \delta_*/2.}
Then the same argument as in (b) implies that $T_+=\I$ and $\|U^1\|_{L^q_{t,x}(s^1_\I,\I)}=\I$, since otherwise $U^1$ scatters and the nonlinear profile decomposition holds on $[0,\I)$, contradicting \eqref{eq:unseq}.

\medskip
Thus we arrive at the conclusion that $s^1_\I<\I$ and $U^1$ is a critical element after time translation.
This implies $E(U^1)=E_*$ by the minimality, which extinguishes the other profiles $U^j$ ($j>1$) as well as the remainder $w^J_n$ as $n\to\I$, through the nonlinear energy decomposition.

Having constructed a critical element $u_*$, we apply the above argument to the sequence
\EQ{
 u_n(t)=u_*(t-t_n), \pq t_n\to\I.}
Then the vanishing of all but one (free) profile implies that for some continuous $\s(t)\in\R$, $x(t)\in \R^d$,
\EQ{\label{eq:compact}
 \{T^{x(t)}\vec S^{\s(t)}\vec u_*(t)\}_{t\ge 0} \subset \HH}
is precompact, concluding the first part of the proof.

Before proceeding to the extinction, we show that
\EQ{
 |P(\vec u)|\lec\e_*.}
Suppose towards a contradiction that $|P(\vec u_*)|=|P_1(\vec u_*)|\gg\e_*$.
Then, since $J(W)\le E(\vec u_*)<J(W)+\e_*^2$, we can use the Lorentz transform to reduce the energy below $J(W)$.
Indeed, let $w$ be any global strong energy solution of~\eqref{main PDE} and Lorentz transform it as follows: with a parameter $\nu\in\R$,
\EQ{ \label{Lorentz}
 w(t,x) \mapsto w_\nu(t,x):=w(t\cosh\nu+x_1\sinh\nu,x_1\cosh\nu+t\sinh\nu,x_2,\ldots).}
Then one checks that  $w_\nu$ is again a strong energy solution of~\eqref{main PDE} which satisfies
\EQ{\label{eq:EPtrans}
 \pt E( w_\nu) = E( w)\cosh\nu+P_1( w)\sinh\nu,
 \pr P_1( w_\nu)=P_1( w)\cosh\nu+E( w)\sinh\nu, \pq P_\al( w_\nu)=P_\al( w),\ (\al>1).}
Now we claim that we can apply the above transform to the forward global solution $u_*$, and then with some $\nu=O(\e_*)$ we can construct another forward global solution $u_\star$ with $E(\vec u_\star)<J(W)$ and $\|u_\star\|_{L^q_{t,x}(0,\I)}=\I$. This contradicts Kenig-Merle's result \cite{KM2} for $E<J(W)$.
In order to transform a solution with infinite Strichartz norm, we argue in the same way as in the subcritical case using the finite propagation speed:
\begin{lem} \label{global Lorentz}
Let $u$ be a solution of \eqref{main PDE} in $C(I;\HH)\cap L^q_{t,x}(I)$ on a time interval $I\ni T$. Then there is an open neighborhood $O$ of the identity in the Lorentz group, such that the transform of $u$ by any $g\in O$ extends to a solution (with finite energy and $L^q$) in a space-time region including a time slab which contains $T$. If $u$ is a solution in $C([T,\I);\HH)\cap L^q_{loc}((T,\I;L^q_x)$, then for any Lorentz transform $u'$ of $u$, there exists $T'\in\R$ such that $u'$ extends to a solution in $C([T',\I);\HH)\cap L^q_{loc}((T',\I);L^q_x)$. Moreover, if $\|u'\|_{L^q_{t,x}(T',\I)}<\I$ then $\|u\|_{L^q_{t,x}(T,\I)}<\I$.
\end{lem}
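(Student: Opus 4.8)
The plan is to reduce to a single Lorentz boost of rapidity $\nu$ as in \eqref{Lorentz} (rotations keep a time slab a time slab and are harmless, and near the identity the Lorentz group is generated by boosts and rotations), and then to exploit: (i) \eqref{main PDE} is Lorentz invariant, so the pullback $u'$ of a solution $u$ is again a solution of \eqref{main PDE} on the open region $\Sigma_\nu:=\{(t,x):\ (t\cosh\nu+x_1\sinh\nu,\ x_1\cosh\nu+t\sinh\nu,\ x_2,\ldots)\in\mathrm{dom}(u)\}$; (ii) the boost is measure preserving on $\R^{1+d}$, so $|u'|^q\,dt\,dx$ pushes forward to $|u|^q\,dt\,dx$ and $\|u'\|_{L^q_{t,x}(\Sigma_\nu)}=\|u\|_{L^q_{t,x}(\mathrm{dom}\,u)}$, while $u'$ has finite $\HH$-norm on spacelike slices inside $\Sigma_\nu$ (a standard consequence of $u\in C(\cdot;\HH)\cap L^q_{t,x}$; cf.\ \eqref{eq:EPtrans} for the corresponding transformation of the conserved energy). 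The real point is geometric: the $g$-image of a horizontal slab $J\times\R^d$ is the \emph{tilted} slab $\Sigma_\nu$, whose bounding hyperplanes are spacelike but not horizontal, so one must upgrade $u'$ from a solution on $\Sigma_\nu$ to a solution on an honest time slab.

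The upgrade uses finite speed of propagation. Since the bounding hyperplanes of $\Sigma_\nu$ are spacelike, $\Sigma_\nu$ is closed under forward light cones; when $g$ is near the identity it contains a full spacetime diamond $\{|t-t_0|+|x-x_0|<\rho\}$ centered at the $g$-image of a point of $\{T\}\times\R^d$ (with $\rho$ of the order of the thickness of $J$), and when $\mathrm{dom}(u)=[T,\I)\times\R^d$ it contains arbitrarily large forward light cones $\{t\ge t_0,\ |x|\le R+(t-t_0)\}$ once $t_0$ is large. Restricting $u'$ to such a diamond (or cone), finite speed of propagation shows its values there are completely determined by the Cauchy data on the ball $\{t_0\}\times B(x_0,\rho)$, a pair in $\HH(B(x_0,\rho))$; extending this data arbitrarily to an element of $\HH$ and solving the Cauchy problem gives, by local well-posedness, a solution $\ti u$ on a genuine slab $(t_0-\de,t_0+\de)\times\R^d$, and by uniqueness together with finite speed of propagation $\ti u$ agrees with $u'$ on the truncated cone over $B(x_0,\rho/2)$. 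As $g\to\mathrm{id}$, $(t_0,x_0)\to(T,0)$, so $\ti u$ lives on a slab containing $T$; this is the first assertion.

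For the forward-global statement one runs the same construction with a large fixed forward cone $\{|x|\le R+(t-T')\}\subset\Sigma_\nu$ (possible for $T'$ large), extending the data of $u'$ on $\{T'\}\times B(0,R)$ to full data $\vec f\in\HH$ — either with small energy outside a larger ball, or (when $u'$ has energy below $J(W)$) with $K(\vec f)\ge0$, so that \cite{KM2} furnishes a forward-global extension $\ti u\in C([T',\I);\HH)\cap L^q_{loc}((T',\I);L^q_x)$ agreeing with $u'$ on that cone; no forward blow-up is introduced, since on the cone $\ti u$ coincides with $u'$, which is forward-global inside $\Sigma_\nu$, and off the cone the extension was chosen subcritical, with $L^q_{loc}$ control inherited from the local theory. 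Finally, ``$\|u'\|_{L^q_{t,x}(T',\I)}<\I\Rightarrow\|u\|_{L^q_{t,x}(T,\I)}<\I$'' follows by restricting the hypothesis to the part of $[T',\I)\times\R^d$ lying in $\Sigma_\nu$, where $u'$ is the honest pullback of $u$; the $L^q$ identity then gives $\|u\|_{L^q_{t,x}}<\I$ over the preimage, a forward cone region $\{|x|\le R_1+(t-T)\}$, while outside that cone $u$ is the free evolution of its exterior Cauchy data, whose $L^q$ norm is finite by small-data scattering once $R_1$ is chosen so that the exterior energy is below the small-data threshold. Summing the two pieces finishes the proof.

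The main obstacle is precisely this tilted-region phenomenon: a Lorentz-boosted solution is not a priori a solution on a time slab, and bridging the gap forces a non-canonical extension of Cauchy data together with control of the extended solution on the region where it is no longer the pullback of $u$ — keeping the energy and $L^q$ accounting honest there, and making sure the extension does not blow up in the relevant time direction, is the technical heart. In the energy-critical regime one should also note that local well-posedness with Cauchy data on a spacelike graph is available, obtained by conjugating the Cauchy problem back to a horizontal one with the boost, so the critical scaling causes no additional difficulty.
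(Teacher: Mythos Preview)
The paper omits this proof entirely, deferring to \cite[Lemmas 6.1 and 6.2]{NakS2}; your argument follows exactly the approach of that reference --- reduce to a single boost, recognize the tilted slab as the domain of the transformed solution, and use finite speed of propagation together with an extension of the Cauchy data (small outside a large ball so that the exterior piece is handled by small-data scattering) to upgrade from the tilted region to an honest forward time slab, with the final $L^q$ implication coming from the measure-preserving property of the boost on the cone plus small-data control on its complement. One minor remark: the alternative you mention of extending the data so that $K(\vec f)\ge 0$ and invoking \cite{KM2} is unnecessary and not available in the generality of the lemma (no energy constraint is assumed there); the small-exterior-energy extension alone suffices and is what \cite{NakS2} uses.
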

The proof is also the same as for \cite[Lemmas 6.1 and 6.2]{NakS2}, so we omit it.

\medskip\noindent{\bf Part II}: Exclusion of a critical element.

Let $u_*$ be a critical element \eqref{def crit}, hence
\EQ{
 \vec w_*(t):=\rho(t)^{d/2}\vec u_*(t,\rho(t)(x-x(t))), \pq \rho(t)=e^{-\s(t)}}
for $t\ge 0$ is precompact in $\HH$. We proceed in three steps.

\medskip\noindent{\bf Step 1}: $\limsup_{t\to\infty}\rho(t)/t < \infty$.
To see this, note that by finite propagation speed, we have
\EQ{
\lim_{R\to\infty}\sup_{t\geq 0} \|\V u_*(t)\|_{\HH(|x|> t+R)} = 0,}
whence we have
\EQ{ \label{compact w*}
\lim_{R\to\infty}\sup_{t\geq 0} \|\V w_*(t)\|_{\HH(|x-x(t)|> (t+R)/\rho(t))} = 0.}
If for some sequence of times $\{s_n\}_{n\geq 1}$ we had $\rho(s_n)/s_n\to \infty$, then by pre-compactn-\\ ess of $\{\V w_*(t)\}_{t\geq 0}$, we get $\|\vec w_*(s_n)\|_{\HH}\to 0$, whence also $\|\vec u_*(s_n)\|_{\HH}\to 0$, which would force $E_* = 0$, a contradiction.

\medskip\noindent{\bf Step 2}: $\liminf_{t\to \infty}\rho(t)/t >0$.
This follows from the localized virial identity \eqref{virial}, together with the control on the energy center as well as the energy equipartition, as in the proof of Theorem \ref{thm: onepass}. By the precompactness, there is $R>0$, depending on $u_*$, such that for all $t\ge 0$
\EQ{ \label{compact in x}
 \|\vec u_*(t)\|_{\HH(|x+\rho(t)x(t)|>R\rho(t))}< \e_*. }
Suppose for contradiction that $\liminf_{t\to\I}\rho(t)/t= 0$.
Choose $T_3\gg T_2\gg 1$ and $R_2,R_3>0$ such that
\EQ{ \label{cond TRj}
 \rho(T_j)\ll \e_* T_j/R, \pq R_j:=R\rho(T_j).}
Let $c_j:=-\rho(T_j)x(T_j)$ with $c(T_2)=0$ by space translation. Then \eqref{compact in x} implies in particular that $\|\vec u_*(T_j)\|_{\HH(|x-c_j|>R_j)} < \e_*$,
hence by the finite propagation speed we have
\EQ{
 |c_3|=|c_3-c_2|\le |T_3-T_2|+R_2+R_3=|T_3-T_2|+O(\e_*T_3),}
where we used \eqref{cond TRj}. Let
\EQ{
 w(t,x)=\chi(|x|/(t-T_2+R_2)), \pq \Ext(t)=\|\vec u_*(t)\|_{\HH(|x|>t-T_2+R_2)}^2,}
with the same $\chi$ as in \eqref{def w}. Using the finite propagation speed as before, one has
\EQ{
 \sup_{t>T_2}\Ext(t)\lec \e_*^2.}
For the localized center of energy $\CE(t)=\LR{xw|e(\vec u)}$, we infer as before
\EQ{
 \pt \dot\CE(t)=O(\Ext(t))-P(\vec u)=O(\e_*),
 \pq |\CE(T_2)|\lec  R_2 \ll \e_*T_2,
 \pr \CE(T_3)= \int_{|x-c_3|<R_3}xe(\vec u_*(T_3))dx+O(\e_*^2T_3)
 =c_3E(u_*)+O(\e_*T_3).}
Thus we obtain upon integrating $\CE(t)$ on $(T_2,T_3)$,
\EQ{
 |c_3| \lec \e_*T_3.}
For the localized virial $V_w(t)=\LR{w\dot u_*|(r\p_r+d/2)u_*}$, one has as before
\EQ{
 \pt \dot V_w(t)=-K(u_*(t))+O(\Ext(t))\lec -K(u_*(t))+O(\e_*^2),
 \pr |V_w(T_2)|\lec R_2 \ll \e_* T_2,
 \pq |V_w(T_3)|\lec |c_3|+R_3 \lec \e_*T_3,}
Integrating $\dot V_w$ on $(T_2,T_3)$, and then arguing as for \eqref{inter bd}, we obtain
\EQ{
 \de_*\int_{T_2}^{T_3}\|\na u_*(t)\|_2^2dt - O(\de_S) \lec \int_{T_2}^{T_3}K(u_*(t))dt \lec \e_* T_3,}
where the negative term $-O(\de_S)$ arises in case\footnote{This could be avoided by taking $\e_*<\e_1(\de_*)$ instead of $<\e_1(\de_S)$.} a hyperbolic interval $I(t_*)\not\subset(T_2,T_3)$ has only its negative part inside $(T_2,T_3)$.

Similarly, using Hardy, we have as before
\EQ{
 \pt \p_t\LR{w\dot u_*|u_*}\ge 2E(\vec u_*)-\|\na u_*(t)\|_2^2+O(\e_*^2),
 \pq |[\LR{w\dot u_*|u_*}]_{T_2}^{T_3}|\lec \e_*T_3.}
Integrating the left inequality and combining it with the above estimates, we obtain
\EQ{
 E(\vec u_*)T_3 \lec \e_*\de_*^{-1}T_3 + \e_*\de_S/\de_*.}
Thus we arrive at $J(W)\le E(\vec u_*)\lec \e_*/\de_* \ll J(W)$, a contradiction.

It now also follows that we may put $x(t) = 0$ for all $t\ge 0$, since the assumption
\EQ{
\limsup_{t\rightarrow\infty}|x(t)| = \infty}
contradicts the compactness property of $\vec w_*$, see \eqref{compact w*}.

\medskip\noindent{\bf Step 3}: Construction of a blow up solution via a re-scaling of $u_*$. Pick a sequence $s_n\to \infty$ with
$\lim_{n\to\infty}\rho(s_n)/s_n = c\in (0, \infty)$, as well as $\V w_*(s_n)\to \exists\U\fy$ in $\HH$. Define a sequence of solutions
\EQ{
 u_n(t, x): = s_n^{d/2-1}u_*(s_n t, s_n x)}
whence we have $\Vec u_n(1)\to c^{-d/2}\U\fy(x/c)$ in $\HH$.

The above two steps imply that $\Vec u_n$ is precompact in $C([\tau,1];\HH)$ for any $0<\tau<1$, and so, after replacement by a subsequence, it converges to some $\Vec u_\I$ in $C((0,1];\HH)$. By the local wellposedness theory, it has finite Strichartz norms locally in time, and so $u_\I$ is the unique strong solution on $(0,1]$ with the initial condition $\Vec u_\I(1)=\U\fy$.
Clearly we also have $d_W(\Vec u_\infty(t))\geq \delta_*$ and $\Sg(\V u_\I(t))=+1$ for $0<t\le 1$.

We now show that $u_\infty$ is a solution blowing up at $t=0$, which contradicts Proposition \ref{prop:Sg+ global}.
The fact that $u_\infty$ blows up at $t=0$ follows from the next

\noindent{\bf Claim}: \emph{$u_\infty(t, x) = 0$ on $|x|>t$.
To see this, pick $0<\eps\ll 1$ arbitrary, let $m$ large enough such that $\|\V w_*(s_m) - \U\fy\|_{\HH} \ll\eps$ and further pick $R>0$ such that $\|\U\fy\|_{\HH(|x|>R)}\ll \eps$.
Then for $n>m$, we have
\EQ{
 \|\vec u_n(s_m/s_n)\|_{\HH(|x| > R\rho(s_m)/s_n)} = \|\vec w_*(s_m)\|_{\HH(|x|>R)}\ll \eps.}
From this and the finite propagation speed, we deduce that for $s_m/s_n\le t\le 1$
\EQ{
 \|\vec u_n(t)\|_{\HH(|x|>R\rho(s_m)/s_n+t-s_m/s_n)}\ll \eps.}
Letting $n\to \infty$, we infer that for $0<t\le 1$
\EQ{
 \|\vec u_n(t)\|_{\HH(|x|>t)}\ll \eps.}
Since $\eps>0$ is arbitrary, this implies that $u_\infty$ is supported on $|x|\le t$, as claimed.} This completes the proof of Proposition \ref{prop:Sg+ scatt}.
\end{proof}

In order to complete the proof of Theorem~\ref{thm: Main}, we now exhibit open data sets at time $t=0$ such that we have blow up/scattering at $t=\pm\infty$, four possibilities in all.
However, this has been done in the radial case \cite{CNLW1} by producing four solutions starting from the neighborhood $\HH_*\setminus\HH_X$ and exiting from it in finite time in both time directions, for all four combinations of $\Sg$ at the exiting times.
Since such behavior is obviously stable in the energy space $\HH$, we get an open set around each solution by the local wellposedness.

In fact, the initial data for such solutions can be given explicitly by
\EQ{ \label{sample}
 \vec u(0) = \vec W + \e \U a \rho, \pq \U a=\mat{\pm 1 \\ 0},\ \mat{0 \\ \pm 1}, \pq 0<\e\ll\e_*.}

For any solution $u$ in the region $d_W(\vec u(t))<\de_E$, Lemma \ref{lem:lambda dom} yields
\EQ{
 \vec u(t)=T^{c(t)}\vec S^{\s(t)}(\sg\vec W+\uv(t)), \pq \uv(t)=\U\la(t)\rho+\U\ga(t),}
with $d_W(t):=d_W(\vec u(t))\sim|\U\la|+\|\ga\|_E$, and
from the proof of Lemma \ref{lem: ejection},
\EQ{
 \pt \p_\ta\la_1=\la_2, \pq \p_\ta\la_2=k^2\la_1+O(d_W^2),
 \pr \p_\ta[\|\ga\|_E^2+O(\|\ga\|_Ed_W^2)]=O(\|\ga\|_Ed_W^2).}
In particular, if $\ga(0)=0$ and the linearized solution $\U\la^0$ for $\U\la$ satisfies for $0\le \ta'\le \ta$
\EQ{
 \int_0^{\ta'}|\U\la^0(\ta'')|d\ta'' \lec |\U\la^0(\ta')|<\de_E,}
then we deduce that (see \cite{NakS0,NakS} for more detail in the subcritical radial case)
\EQ{
 |\U\la-\U\la^0|+\|\ga\|_E \lec |\U\la^0|^2 \simeq d_W^2.}
This is the case for \eqref{sample}, with
\EQ{
 \CAS{\U a=(\pm 1,0) \implies \U\la^0=\pm\e(\cosh(k\ta),k\sinh(k\ta)), \\
 \U a=(0,\pm 1) \implies \U\la^0=\pm\e(\sinh(k\ta)/k,\cosh(k\ta)).}}
Moreover, when $\e\ll|\U\la^0|\ll\de_S$, the solution is in the region $\HH_X$ with $\Sg(\vec u)=-\sign\la^0_1$.
Hence the solution for $\U a=(\pm 1,0)$ respectively blows up and scatters both in $t<0$ and $t>0$, while the solutions for $\U a=(0,\pm 1)$ blows up in $\pm t>0$ and scatters for $t\to\mp\I$.
One can easily check that the former case $\U a=(\pm 1,0)$ is actually in the Kenig-Merle (or Payne-Sattinger) criterion $E(\vec u)<J(W)$ and $\pm K(u(0))<0$, while the latter case $\U a=(0,\pm 1)$ satisfies $E(\vec u)>J(W)$.

\section*{Note added in proof}
After submitting this paper, the authors learned the further progress by Duyckaerts, Kenig and Merle \cite{DKM4}, where they prove the soliton resolution conjecture for all solutions which are bounded in the energy space, albeit under the radial assumption. In particular, their result contains that under our  energy constraint and away from the ground states, there are {\it at most} four sets of global dynamics, namely $A_{\pm,\pm}$ in our notation.  In particular, \cite{DKM4} implies the remarkable property (in
the radial case and under our energy constraint) that all blowup outside of the soliton tube {\em is of type~$I$}.

We should emphasize, however, that their analysis is aimed at the asymptotic behavior around the boundary of the existence interval, while our analysis is concerned with the intermediate dynamics. Specifically, their analysis does not (at least immediately) yield the following facts proved in this paper:
\begin{enumerate}
\item The two sets of global dynamics $A_{\pm,\mp}$ have non-empty interior, namely stable sets of solutions with dynamical transition from the scattering to the norm blowup (and vice versa).
\item There is no solution under our energy constraint which stays close (or scatters) to $\cS$ around one endpoint of the existence time, and to $-\cS$ around the other endpoint.
\end{enumerate}
Note that our proof for the existence (1) crucially relies on the one-pass theorem, in which the non-existence (2) is included.
With higher energy, such solutions as in (2) can exist, but it is not deduced from \cite{DKM4} either, requiring some analysis on the intermediate dynamics.

\medskip
\quad
% The data information below will be filled by AIMS editorial staff
Received January 2012; revised August 2012.
\medskip

\end{document}